\newcommand{\rar}[1]{\stackrel{#1}{\longrightarrow}}
\newcommand{\lar}[1]{\stackrel{#1}{\longleftarrow}}
\newcommand{\bA}{{\mathbb A}}
\newcommand{\bC}{{\mathbb C}}
\newcommand{\bD}{{\mathbb D}}
\newcommand{\bP}{{\mathbb P}}
\newcommand{\bQ}{{\mathbb Q}}
\newcommand{\bR}{{\mathbb R}}
\newcommand{\bZ}{{\mathbb Z}}
\newcommand{\cA}{{\mathcal A}}
\newcommand{\cB}{{\mathcal B}}
\newcommand{\cC}{{\mathcal C}}
\newcommand{\cD}{{\mathcal D}}
\newcommand{\cE}{{\mathcal E}}
\newcommand{\cF}{{\mathcal F}}
\newcommand{\cG}{{\mathcal G}}
\newcommand{\cH}{{\mathcal H}}
\newcommand{\cM}{{\mathcal M}}
\newcommand{\cN}{{\mathcal N}}
\newcommand{\cP}{{\mathcal P}}
\newcommand{\cS}{{\mathcal S}}
\newcommand{\cT}{{\mathcal T}}
\newcommand{\cU}{{\mathcal U}}
\newcommand{\cV}{{\mathcal V}}
\newcommand{\cX}{{\mathcal X}}
\newcommand{\eff}{\text{eff}}
\newcommand{\et}{\text{et}}
\newcommand{\gm}{\text{gm}}
\newcommand{\Ho}{\text{Ho}}
\newcommand{\colim}{\text{colim}}
\newcommand{\fg}{{\mathfrak g}}
\newcommand{\nc}{\newcommand}
\nc\wh{\widehat}
\nc\on{\operatorname}
\nc\Gr{\on{Gr}}
\nc\Fl{\on{Fl}}
\newcommand{\limto}{{\displaystyle\lim_{\longrightarrow}}}
\newcommand{\rightlim}{\mathop{\limto}}
\newcommand{\leftlim}{\mathop{\displaystyle\lim_{\longleftarrow}}}
\newcommand{\limfromn}{\leftlim\limits_{\raise3pt\hbox{$n$}}}
\newcommand{\limton}{\rightlim\limits_{\raise3pt\hbox{$n$}}}
\newcommand{\rightlimit}[1]{\mathop{\lim\limits_{\longrightarrow}}\limits%
                    _{\raise3pt\hbox{$\scriptstyle #1$}}}
\newcommand{\leftlimit}[1]{\mathop{\lim\limits_{\longleftarrow}}\limits%
                    _{\raise3pt\hbox{$\scriptstyle #1$}}}
\newcommand{\epi}{\twoheadrightarrow}
\newcommand{\iso}{\buildrel{\sim}\over{\longrightarrow}}
\newcommand{\mono}{\hookrightarrow}
\newcommand{\Rmnum}[1]{\expandafter\@slowromancap\romannumeral #1@}
\newtheorem{Th}{Theorem}
\newtheorem{pr}{Proposition}[section]
\newtheorem{lm}[pr]{Lemma}
\newtheorem{cor}[pr]{Corollary}
\theoremstyle{definition}
\newtheorem{rem}[pr]{Remark}
\numberwithin{equation}{section}
\begin{document}

\title{Hodge realizations of 1-motives and the derived Albanese}

\author{Vadim Vologodsky}

\address{Department of Mathematics, University of Oregon, Eugene, OR, 97403, USA}
\email{vvologod@uoregon.edu}

\keywords{Algebraic cycles, Voevodsky motives, Hodge theory.}

\subjclass[2000]{Primary 14F42, 14C25;  Secondary 14C22, 14F05.}


\begin{abstract}
We prove that the embedding of the derived category of 1-motives up to isogeny 
into the triangulated category of effective Voevodsky motives, as
well as its left adjoint functor $LAlb_\bQ$, commute with the Hodge
realization. This result yields a new proof of the rational form of Deligne's conjecture on 1-motives. 
\end{abstract}

\maketitle

\section{Introduction}
\subsection{Main results.}\label{m.r.i}  Let  $k$ be a field embedded in $\bC$. We denote by $\cM_1(k)$  the category of Deligne's 1-motives over $k$. 
In (\cite{d3}) Deligne constructed a realization functor from $\cM_1(k)$ to the full subcategory $MHS_1^{\bZ}\subset MHS_{\eff}^\bZ $ of the category 
of effective polarizable mixed Hodge structures\footnote{A mixed Hodge structure is called effective if $F^1=0$.}  consisting of torsion free objects of type $\{(0,0),(0,-1),(-1, 0),(-1,-1)\}$. 
We define exact structures on  $\cM_1(k)$,  $MHS_1^{\bZ}$ postulating that a sequence is exact if its image in the abelian category $MHS_{\eff}^\bZ$ is exact.  Then Deligne's realization functor extends to the corresponding derived categories yielding a triangulated functor
$$T^{Hodge}_\bZ: D^b(  \cM_1(k)) \to  D^b( MHS_1^{\bZ}).$$
On the other hand,  in \cite{bk} Barbieri-Viale and Kahn constructed a fully faithful embedding of the category  $D^b(  \cM_1(k))$ into the triangulated  category of \'etale Voevodsky motives
$$Tot_\bZ : D^b(  \cM_1(k))   \to DM^{\eff}_{\gm, \et}(k; \bZ).$$
In \S \ref{r.v.m} we define a Hodge realization functor
$$R^{Hodge}_\bZ:  DM^{\eff}_{\gm, \et}(k; \bZ) \to  D^b(MHS_{\eff}^\bZ ),$$
which is an integral refinement of the functor constructed by Huber (\cite{hu1}).
 The main result of this paper, conjectured by Barbieri-Viale and Kahn (\cite{bk}, \S 0.13), is the commutativity of the following diagram.
    \begin{equation}\label{mainresintro}
\def\normalbaselines{\baselineskip20pt
\lineskip3pt  \lineskiplimit3pt}
\def\mapright#1{\smash{
\mathop{\to}\limits^{#1}}}
\def\mapdown#1{\Big\downarrow\rlap
{$\vcenter{\hbox{$\scriptstyle#1$}}$}}
\begin{matrix}
D^b(  \cM_1(k)) & \rar{Tot_\bZ} &  DM^{\eff}_{\gm, \et}(k; \bZ) \cr
  \mapdown{T^{Hodge}_\bZ} &  &\mapdown{R^{Hodge}_\bZ}   \cr
 D^b( MHS_1^{\bZ})   &  \rar{\overline {Tot}_\bZ} & D^b(MHS_{\eff}^\bZ ).
\end{matrix}
 \end{equation}
 Here $\overline {Tot}_\bZ$ denotes the functor induced by the inclusion $MHS_1^{\bZ}\subset MHS_{\eff}^\bZ $.
 
To state a corollary of (\ref{mainresintro}), recall  from \cite{bk} that the embedding $$Tot_\bQ: D^b(  \cM_1(k)\otimes \bQ)  \mono DM^{\eff}_{\gm, \et}(k; \bQ)$$
admits a left adjoint  functor, $LAlb_\bQ$,
called  the motivic Albanese functor. The functor  $$\overline {Tot}_\bQ: D^b( MHS_1^{\bQ}) \mono  D^b(MHS_{\eff}^\bQ )$$ also admits a left adjoint  (\cite{bk}, \cite{vol2}), denoted by $\overline{LAlb}_\bQ$.   We prove the commutativity of the following diagram
 \begin{equation}\label{mainresintro2}
\def\normalbaselines{\baselineskip20pt
\lineskip3pt  \lineskiplimit3pt}
\def\mapright#1{\smash{
\mathop{\to}\limits^{#1}}}
\def\mapdown#1{\Big\downarrow\rlap
{$\vcenter{\hbox{$\scriptstyle#1$}}$}}
\begin{matrix}
  DM^{\eff}_{\gm, \et}(k; \bQ)& \rar{LAlb_\bQ}  &D^b(  \cM_1(k)\otimes \bQ)  \cr
 \mapdown{R^{Hodge}_\bQ}  && \mapdown{T^{Hodge}_\bQ }  \cr
 D^b(MHS_{\eff}^\bQ )& \rar{\overline{LAlb}_\bQ } &  D^b( MHS_1^{\bQ}).
\end{matrix}
 \end{equation}
In particular, applying  (\ref{mainresintro2}) to the motive $M(X)$ of a scheme $X$ of finite type over $k$ and writing  $H_i(LAlb_\bQ(X))$ for the $i$-th homology of the complex
$LAlb_\bQ(M(X))\in D^b(  \cM_1(k)\otimes \bQ)$,  
we find a canonical isomorphism
$$T^{Hodge}_\bQ(H_i(LAlb_\bQ(X)))\iso \overline{LAlb} (H_i(X\otimes_k \bC)).$$
Here $\overline{LAlb} (H_i(X\otimes_k \bC))$ denotes the maximal quotient of the mixed $\bQ$-Hodge structure $H_i(X\otimes _k \bC)$ which belongs to $ MHS_1^\bQ $. The fact that 
the mixed Hodge structure $\overline{LAlb} (H_i(X\otimes_k \bC))$ is the realization of a 1-motive defined over $k$ was anticipated by Deligne  (\cite{d3}, \S 10.4.1) and has been known since then as  
 ``Deligne's conjecture on 1-motives up to isogeny''. This conjecture was proven using different methods in  \cite{brs} and in \cite{r2}.
We refer the reader to Part 4 of  \cite{bk} for a detailed discussion of Deligne's conjecture.

In a subsequent paper (\cite{vol3}) we construct a version of (\ref{mainresintro2}) for motives with {\it integral} coefficients. This requires new ideas because  none  of the functors  $\overline{Tot}_\bZ$, $Tot_\bZ$ admits a left adjoint. 
\subsection{Proofs.}\label{rop} 
 The proof of (\ref{mainresintro}) is quite involved. 
In order to compute Huber's realization of 1-motives one has to find a reasonably small functorial resolution for these motives by motives of smooth algebraic  varieties.  This is not trivial already for motives of the form $\underline A$,  where $A$ is an abelian variety.
With rational coefficients a suitable resolution of 1-motives can be produced using the Eilenberg-MacLane cube construction
 (see \S  \ref{e.m.}).\footnote{The idea to use the Eilenberg-MacLane cube construction to construct a nice projective resolution of the presheaf   $\underline A \otimes \bQ$ goes back to the work
of Breen \cite{br}.} Though with integral coefficients  the Eilenberg-MacLane complex has higher cohomology,   in \S \ref{betticompute}-\ref{c.h.r.}  we use its truncated version to define an isomorphism 
\begin{equation}\label{eq104}
(\overline {Tot}_\bZ \circ T^{Hodge}_\bZ)_{\vert \cM_1(k)} \simeq       (R^{Hodge}_\bZ \circ Tot_\bZ) _{\vert \cM_1(k)}.
\end{equation}
The construction of (\ref{eq104}) is the key part of the proof.  As we explain in Remark \ref{wrl}, the commutativity of diagram (\ref{mainresintro}) with {\it rational} coefficients follows trivially from (\ref{eq104}): 
 since $\cM_1(k)\otimes \bQ$ is abelian and has homological dimension $1$,  isomorphism (\ref{eq104}) extends uniquely to an isomorphism of triangulated functors
\begin{equation}\label{eq105}
\overline {Tot}_\bQ \circ T^{Hodge}_\bQ \simeq       R^{Hodge}_\bQ \circ Tot_\bQ.
\end{equation}
In fact,  a  general categorical result  from \cite{vol1} (that we recall in Theorem \ref{bounded}) implies that (\ref{eq104}) extends canonically to $D^b(\cM_1(k))$ provided that
 DG liftings of triangulated categories and functors appearing in diagram (\ref{mainresintro}) are given.
The required work  is done in the lengthy \S \ref{r.v.m}, where we explain
how the Hodge realization functor  $R^{Hodge}_\bZ$ can be lifted to a DG quasi-functor from the DG category of Voevodsky motives (constructed in \cite{bv}) to the derived DG category of Hodge structures.  Although our construction of the DG lifting dwells on known ideas  
from (\cite{bei}, \cite{v2}, \cite{bv})  we believe that it is important on its own ground and has some further applications (see \cite{vol3}). 

 Once the commutative diagram (\ref{mainresintro}) is constructed to prove (\ref{mainresintro2}) it suffices to show that the morphism of functors 
 \begin{equation}\label{eqsecondresin}
  \overline {Tot_\bQ}  \circ \overline{LAlb_\bQ} \circ R^{Hodge}_\bQ
\to R^{Hodge}_\bQ \circ Tot_\bQ  \circ LAlb_\bQ   
\end{equation}
  induced by the canonical morphism  $Id\to Tot_\bQ \circ Alb_\bQ$ is an isomorphism, which is already proven in (\cite{bk}) using the Lefschetz $(1,1)$ Theorem.
   \subsection{A small reading guide.}\label{s.r.g.} Section \ref{r.v.m} contains a construction of DG liftings of the Hodge  and \'etale realization functors as well as definitions and results on 1-motives, which we assume known in the rest of the paper.
  We advise the reader go directly to section \ref{proofs}, which contains the proofs of the results stated above and use \S  \ref{r.v.m}  as a reference. A list of notation can be found at the end of the paper. We would like to warn the reader that symbols 
  ${Tot}_\bZ$,  $LAlb_\bZ$, $R^{Hodge}_\bZ$, {\it etc.},
  are used in the main body of the paper to denote DG liftings (constructed in \S \ref{r.v.m}) of the triangulated functors discussed above.

\subsection{Acknowledgments.}\label{a.} 
 The author is grateful to  Chuck Weibel and the referee(s) for their generous help in turning a raw draft into an article.
 This research was partially supported by NSF grants  DMS-0701106, DMS-0901707.

\section{Realizations of Voevodsky motives.}\label{r.v.m}
In this section we review a DG structure on the category of  \'etale Voevodsky motives and then lift the  Hodge and  \'etale realization
functors  (\cite{hu1}, \cite{hu2}) to DG quasi-functors. Our basic tool is Theorem \ref{abrealization} asserting that a DG realization $D\cM^{\eff}_{\et} (k; A) \to \cC$ into a cocomplete compactly generated DG category $\cC$ is just an ``ordinary'' functor from the category of smooth connected schemes to $\cC$ which is $\bA^1$-homotopy invariant and satisfies the descent property for Voevodsky's $h$-topology. 
    

\subsection{DG categories.}\label{dgcat} The aim of this subsection is to fix notation and to recall some terminology related to the theory of DG categories. We refer the reader to   (\cite{boka}, \cite{dri}, \cite{t}, \cite{k}, \cite{bv} \S1)
for a systematic discussion of the subject.
Fix some universes $\cU\in \cV$.  
$\cU$-small DG categories over commutative ring $A$ can be organized into a 2-category $DGcat_\cU$:  for DG categories $\cC_1$, $\cC_2$, $Mor_{DGcat_\cU}(\cC_1, \cC_2)$ is the category $\cT(\cC_1, \cC_2)$  of DG quasi-functors  (\cite{dri}, \S 16 or \cite{t}, \S 6\footnote{To\"en's notation for $\cT(\cC_1, \cC_2)$ is $\bR\underline{Hom}(\cC_1, \cC_2)$.}). 
Given $\cF \in \cT(\cC_1, \cC_2)$ we will write  $\Ho(\cF): \Ho(\cC_1) \to \Ho(\cC_2)$  for the corresponding functor between the {\it homotopy categories}. 
 Informally, the category of DG quasi-functors is obtained from the category of DG functors by inverting homotopy equivalences: a DG functor $\cF:  \cC_1 \to \cC_2$ is called a homotopy equivalence if  $\Ho(\cF)$ is an equivalence and, for every $X,Y\in \cC_1$, the morphism 
 $Hom_{\cC_1}(X,Y)\to  Hom_{\cC_2}(\cF(X),\cF(Y))$ is a quasi-isomorphism.  If $\cF$ is a homotopy equivalence and $\cC_3$ is another DG category then the composition with $\cF$ induces an equivalence of categories:
 $$\cT(\cC_2, \cC_3)\iso \cT(\cC_1, \cC_3).$$

 A DG $\cU$-category $\cC$ is called {\it $\cU$-cocomplete} if its homotopy category is cocomplete {\it i.e.}, closed under $\cU$-small direct sums.
 For a class $S$ of objects of a triangulated category $D$,  the {\it triangulated subcategory $<S>$ strongly generated by $S$} is the smallest  strictly full triangulated subcategory of $D$ that
contains $S$.  If  $D$ is $\cU$-cocomplete the {\it triangulated subcategory $\langle S \rangle $ generated by $S$}
is the smallest  strictly full triangulated subcategory of $D$ that
contains $S$ and is closed under arbitrary $\cU$-small direct sums.  We will write 
  $D^{perf}\subset D$ for the full subcategory of $\cU$-compact, a.k.a. perfect,  objects of $D$. For a  cocomplete DG category $\cC$  we will denote by
 $\cC^{perf}\subset \cC$ the full DG subcategory that consists of objects compact in $\Ho(\cC)$. A pretriangulated cocomplete  DG $\cU$-category $\cC$ is said to be {\it compactly generated} if  $\Ho(\cC)$ is generated by a $\cU$-small
 set of perfect objects.

 The DG  $\cU$-ind-completion of $\cC \in DGcat_\cU$ will be denoted by $\underrightarrow{\cC}$.  This is a pretriangulated cocomplete $\cV$-small DG $\cU$-category with $\cC \subset \underrightarrow{\cC}^{perf}$
 such that the triangulated category $\Ho(\underrightarrow{\cC})$ is generated by $\Ho(\cC)$ 
 satisfying the following universal property:  
for every pretriangulated cocomplete compactly generated DG $\cU$-category $\cC' \in DGcat_\cV$, one has  
\begin{equation}\label{dgindcompl}
\cT^c(\underrightarrow{\cC}, \cC') \iso \cT(\cC, \cC'),
\end{equation}
where $\cT^c(\underrightarrow{\cC}, \cC')$ is the full subcategory of $\cT(\underrightarrow{\cC}, \cC')$ whose objects are quasi-functors $\cF$ such that $\Ho(\cF)$ commutes with arbitrary $\cU$-small direct sums (\cite{t}, Theorem 7.2\footnote{To\"en's notation for $\underrightarrow{\cC}$ is $\hat{\cC}$.} and \cite{bv}, Proposition 1.7).
The triangulated category
 $\Ho(\underrightarrow{\cC})$ is the derived category of $\cU$-small right DG modules over $\cC$ and will be denoted by  $\bD(\cC)$.

 A {\it DG structure} on a triangulated category $D$ is a pretriangulated DG category $\cC$ together with a triangulated equivalence between $D$ and  $\Ho(\cC)$.  A DG structure on $D$ induces a DG structure on any strictly full triangulated subcategory
$I \subset D$: take for $\cC_I$ the strictly full DG subcategory of $\cC$ whose objects belong to the essential image of the functor $I \mono D \iso \Ho(\cC)$. 
 In particular, for a class $S\subset Ob(D)$ we refer to $\cC_{<S>}\subset \cC$ (resp. $\cC_{\langle S \rangle}\subset \cC$)  as the DG subcategory {\it strongly generated (resp. generated) by $S$}.   

For a DG category $\cC\in DGcat_\cU$ its pretriangulated completion $\cC^{pretr}\in DGcat_\cU$ is a full subcategory of  $\underrightarrow{\cC}$ strongly  generated by  $\Ho(\cC) \subset \Ho(\underrightarrow{\cC})$. The category $\cC^{pretr}$ has the following universal property: for
every pretriangulated DG category $\cC' \in DGcat_\cU$ we have 
\begin{equation}\label{pretr}
\cT(\cC^{pretr}, \cC') \iso \cT(\cC, \cC').
\end{equation}
 
For a full subcategory $\cB \subset \cC\in DGcat_\cU$ the DG quotient $\cC/\cB$ is a $\cU$-small DG category equipped with a DG quasi-functor $\cC \to  \cC/\cB$ satisfying the following universal property: for every DG category $\cC'\in DGcat_\cU$ the functor
\begin{equation}\label{dgquotient}
\cT(\cC/\cB , \cC') \to \cT(\cC, \cC')
\end{equation}
 is fully faithful embedding whose essential image consists of quasi-functors $\cF \in   \cT(\cC, \cC')$ such that $\Ho(\cF)(\Ho(\cB))=0$.\footnote{In particular, if $\cB$ is nonempty and $\Ho(\cC')$ does not have a zero object   $\cT(\cC/\cB , \cC')$ is empty.}
 A DG quotient  $\cC/\cB$ always exists (an unique up to a unique isomorphism in $DGcat_\cU$).  
 In particular, given an $A$-linear exact category  $\cE$  we can define its  bounded derived DG category  $D^{b}_{dg}({\cE})$ to be the DG quotient  of the DG category $C^b_{dg}({\cE})$ of bounded complexes by the subcategory of acyclic ones (\cite{n}, \S 1).

 We will use the following result (\cite{vol1}, Theorem 1).
\begin{Th}\label{bounded}
  Let $\cE$ be a $\cU$-small $A$-linear exact category and $\cE'$ a  $\cU$-small abelian $A$-linear category.  Assume that for every two objects $X,Y\in \cE$,  the $A$-module $Hom(X,Y)$ is flat. 
  \begin{enumerate}
\item{ Let  $$\cF:  D^{b}_{dg}({\cE}) \to D^{b}_{dg}({\cE}') $$ be a DG quasi-functor
 satisfying the following property:

(P) The functors $$H^i \cF: \cE \to D^{b}_{dg}({\cE}) \stackrel{\cF}{\longrightarrow} D^{b}_{dg}({\cE'}) \stackrel{H^i}{\longrightarrow} \cE'$$ are $0$ for every $i<0$ and
  effaceable  ( {\it i.e.},  for every object $X\in \cE$, there is an admissible monomorphism  $X\hookrightarrow Y$ such that 
the induced morphism   $H^i \cF(X) \to  H^i \cF(Y)$ is $0$) for every $i>0$.

Then the functor $F:= H^0 \cF : \cE \to \cE'$ is left exact, has a right derived DG quasi-functor (\cite{dri} \S 5)  
         $$RF:  D^{b}_{dg}({\cE}) \to D^{b}_{dg}({\cE}'), $$ 
         and  there is a unique isomorphism $\cF \simeq RF$ such that the induced automorphism  $F= H^0(\cF) \simeq H^0(RF)= F$ equals $Id$. Conversely,  the right derived DG quasi-functor of any  left exact functor $F:  \cE \to \cE'$ satisfies property (P). } 
         
\item{For every two DG quasi-functors $\cF,\cG  \in \cT(D^{b}_{dg}({\cE}), D^{b}_{dg}({\cE}') ) $ satisfying property (P)
and every $i<0$
$$Hom_{\cT(D^{b}_{dg}({\cE}), D^{b}_{dg}({\cE}') )}(\cF, \cG[i])=0,$$
$$Hom _{\cT(D^{b}_{dg}({\cE}), D^{b}_{dg}({\cE}') )}(\cF, \cG) = Hom_{Fct(\cE, \cE')}(H^0\cF, H^0\cG).$$
Here $Fct(\cE, \cE')$ denotes the category of all $A$-linear functors $\cE \to \cE'$.  }

\end{enumerate}

 \end{Th}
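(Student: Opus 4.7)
The strategy is to treat statement (2) as the technical core and derive (1) from it together with the standard construction of right derived DG quasi-functors. Left exactness of $F = H^0\cF$ is immediate from (P): an admissible short exact sequence $0 \to X \to Y \to Z \to 0$ in $\cE$ becomes a distinguished triangle in $D^{b}_{dg}(\cE)$, and applying $\cF$ produces a distinguished triangle in $D^{b}_{dg}(\cE')$ whose long exact sequence in cohomology starts with $H^{-1}\cF(Z) = 0$.

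For statement (2), I would first use the universal property \eqref{dgquotient} of $D^{b}_{dg}(\cE)$ as a DG quotient of $C^{b}_{dg}(\cE)$ by the acyclics, together with the pretriangulated envelope formula \eqref{pretr}, to describe quasi-functors out of $D^{b}_{dg}(\cE)$ by data indexed only by $\cE$ itself. The flatness hypothesis on the Hom complexes of $\cE$ ensures that this passage is compatible with taking cohomology of the relevant mapping complexes. The main step is an iterative effacement: given $X \in \cE$, property (P) for $\cG$ yields an admissible monomorphism $X \hookrightarrow X^{0}$ that kills $H^{i}\cG(X)$ in $H^{i}\cG(X^{0})$ for all $i > 0$; setting $X_{1} = X^{0}/X$ and iterating yields a sequence $X \hookrightarrow X^{0} \to X^{1} \to \cdots$ in $\cE$ which behaves as a $\cG$-acyclic right resolution of $X$ in $D^{b}_{dg}(\cE)$. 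Using this resolution to represent elements of $\Hom(\cF, \cG[i])$ by cocycles built out of the values of $\cF,\cG$ on the $X^{n}$, condition (P) for $\cG$ forces the vanishing for $i<0$, while in degree $0$ the cocycle condition reduces to a collection of morphisms compatible with morphisms in $\cE$, i.e.\ an element of $\Hom_{Fct(\cE,\cE')}(H^{0}\cF, H^{0}\cG)$.

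For statement (1), the existence of $RF$ as a DG quasi-functor follows from Drinfeld's DG quotient construction: one models $RF$ by sending an object of $\cE$ to the complex obtained from an effacement resolution as above, and extends to $D^{b}_{dg}(\cE)$ through the universal property of the quotient. The converse, that $RF$ satisfies (P), is standard: effaceability is provided by the effacement sequences just constructed, and the vanishing $H^{i}(RF)=0$ for $i<0$ reflects left exactness of $F$. Finally, the uniqueness of an isomorphism $\cF \simeq RF$ inducing the identity on $H^{0}$ is an immediate consequence of statement (2) applied with $\cG = RF$: the group $\Hom(\cF, RF)$ is canonically $\Hom_{Fct(\cE,\cE')}(F, F)$, so any two isomorphisms agreeing on $H^{0}$ coincide. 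I expect the main obstacle to be the careful bookkeeping in the effacement argument of the second paragraph — specifically, showing that the iterative cokernel construction assembles into a genuine cocycle computing $\Hom(\cF, \cG[i])$ in the DG quotient, where the flatness hypothesis is essential to rule out higher Tor contributions.
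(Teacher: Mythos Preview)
The paper does not prove Theorem~\ref{bounded}; it merely quotes the statement and cites \cite{vol1}, Theorem~1. So there is no ``paper's own proof'' to compare your proposal against.

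That said, your outline follows a sensible strategy, and the reduction of (1) to (2) together with the construction of $RF$ via effacement resolutions is the natural approach. Two points deserve more care. First, property (P) only guarantees, for each fixed $i>0$ and each $X$, an admissible monomorphism effacing $H^i\cG(X)$; you are implicitly using that one monomorphism can be chosen to efface all $H^i\cG(X)$ at once. This is fine because $\cG(X)\in D^b(\cE')$ has only finitely many nonzero cohomology objects and admissible monomorphisms are closed under composition, but it should be said. Second, and more seriously, the passage ``using this resolution to represent elements of $\Hom(\cF,\cG[i])$ by cocycles built out of the values of $\cF,\cG$ on the $X^n$'' is precisely the step that carries all the weight, and you have not explained it. Morphisms in $\cT(D^b_{dg}(\cE),D^b_{dg}(\cE'))$ are morphisms of DG quasi-functors, i.e.\ of certain bimodules up to quasi-isomorphism; turning the effacement resolution of each $X$ into a computation of this Hom requires a genuine argument (this is where the flatness of Hom-modules in $\cE$ and the identification of $\cT$ with a derived internal Hom in the sense of \cite{t} enter). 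Without that, your sketch is a plausible plan but not yet a proof; you should consult \cite{vol1} for the details.
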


 Having the 2-category $DGcat$ we define the notion of adjoint DG quasi-functors:  given $\cF \in \cT(\cC_1, \cC_2)$  a right  {\it adjunction datum} $(\cG, \nu, \mu)$ consists of 
 a quasi-functor $\cG\in \cT(\cC_2, \cC_1)$ together with  morphisms
 $\nu: Id \to \cG \circ \cF,$ $\mu: \cF \circ \cG \simeq Id $ such that 
 the compositions
$$ \cF \rar{\cF(\nu)}
 \cF\circ \cG \circ \cF
 \rar{\mu(\cF)}     \cF$$
$$ \cG \rar{\nu (\cG)}  
 \cG \circ \cF \circ \cG
 \rar{\cG(\mu) } \cG $$
are identity morphisms.
It is a general property of 2-categories that the adjunction datum if it exists is unique up to a unique isomorphism (\cite{ben}). If this is the case we call $\cG$ the right adjoint quasi-functor. Similarly, one defines the notion of left adjoint quasi-functor.
\begin{lm}\label{dgadj} Let $\cC_1, \cC_2$ be pretriangulated DG categories and let $\cF  \in \cT(\cC_1, \cC_2)$ be a DG quasi-functor. Assume that $\Ho(\cF)$ admits a right (resp. left) adjoint functor. Then
$\cF$ admits a right (resp. left) adjoint quasi-functor.
\end{lm}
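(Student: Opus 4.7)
The plan is to use To\"en's description of DG quasi-functors by DG bimodules (Theorem~7.2 of \cite{t}) to construct the right adjoint directly at the bimodule level, using the hypothesis on $\Ho(\cF)$ to produce the needed quasi-representability. The left adjoint case is dual (pass to opposite DG categories).

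By To\"en's theorem, for pretriangulated $\cC_2$ the category $\cT(\cC_1, \cC_2)$ of DG quasi-functors is equivalent to the full subcategory of \emph{quasi-representable} objects of the derived category of $(\cC_1, \cC_2)$-bimodules. Concretely, a DG quasi-functor $\cF: \cC_1 \to \cC_2$ corresponds to a bimodule $M_{\cF}$ with $M_{\cF}(X, Y) \simeq \Hom_{\cC_2}(\Ho(\cF)(X), Y)$. To produce a right adjoint $\cG: \cC_2 \to \cC_1$ it suffices to establish a bifunctorial quasi-isomorphism
$$\Hom_{\cC_1}(X, \cG(Y)) \simeq \Hom_{\cC_2}(\cF(X), Y)$$
in the derived category of bimodules.

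For this, fix $Y \in \cC_2$ and consider the right $\cC_1$-module $X \mapsto \Hom_{\cC_2}(\cF(X), Y)$ extracted from $M_{\cF}$. Its cohomology computes
$$H^{*}\bigl(\Hom_{\cC_2}(\cF(X), Y)\bigr) \cong \Hom^{*}_{\Ho(\cC_2)}\bigl(\Ho(\cF)(X), Y\bigr) \cong \Hom^{*}_{\Ho(\cC_1)}\bigl(X, \bar{\cG}(Y)\bigr),$$
naturally in $X$, by the assumed right adjoint $\bar{\cG}$ of $\Ho(\cF)$. Set $\cG(Y) := \bar{\cG}(Y) \in \cC_1$. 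By Yoneda in the derived category of right $\cC_1$-modules, morphisms $\Hom_{\cC_1}(-, \cG(Y)) \to \Hom_{\cC_2}(\cF(-), Y)$ correspond to elements of $H^{0}\Hom_{\cC_2}(\cF(\cG(Y)), Y)$; I pick the class matching $\id_{\cG(Y)}$ under the adjunction isomorphism and verify that the resulting morphism of right $\cC_1$-modules is a quasi-isomorphism (since it induces the original cohomology-level isomorphism). As $Y$ varies, To\"en's bimodule formalism packages these object-wise quasi-isomorphisms into a single bifunctorial quasi-isomorphism in the derived category of bimodules, producing $\cG$ as a DG quasi-functor.

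From this bifunctorial quasi-isomorphism the adjunction data $(\cG, \nu, \mu)$ are extracted by the standard recipe: the unit $\nu: \id \to \cG \circ \cF$ is the class of the identity under the specialization $Y = \cF(X)$, and the counit $\mu: \cF \circ \cG \to \id$ is the class of the identity under $X = \cG(Y)$. The triangle identities then hold formally, as an instance of the general construction of a bicategorical adjunction from a natural isomorphism of represented bifunctors, applied in $DGcat_\cU$. The main technical step is upgrading the object-wise Yoneda lift (which only produces $\bar{\cG}: \Ho(\cC_2) \to \Ho(\cC_1)$ at the homotopy level) to a functorial DG construction; the bimodule framework accomplishes this by working in the derived category of bimodules from the outset, where the Yoneda argument yields $\cG$ as a bimodule all at once. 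Everything else is formal.
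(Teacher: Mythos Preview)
Your argument is correct and is essentially the paper's proof recast in To\"en's bimodule language rather than ind-completions: the paper extends $\cF$ to $\cF^*\colon \underrightarrow{\cC_1} \to \underrightarrow{\cC_2}$, invokes \cite{dri}, \S 14, for the existence of a right adjoint quasi-functor $\cF_*$, and then uses the homotopy adjoint $\bar\cG$ to see that $\cF_*$ carries $\cC_2$ into the essential image of $\cC_1$ --- which is exactly your quasi-representability check, since $\cF_*(Y)\in\underrightarrow{\cC_1}$ \emph{is} the right $\cC_1$-module $X\mapsto\Hom_{\cC_2}(\cF(X),Y)$. One minor difference in packaging: the paper gets the adjunction data $(\nu,\mu)$ and triangle identities for free by restricting Drinfeld's adjunction from the ind-completions, whereas your ``packaging'' and ``holds formally'' steps are re-deriving that same fact and are the place where your write-up is loosest.
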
 
 \begin{proof} For every $\cF\in \cT(\cC_1, \cC_2)$ the induced functor $\cF^*: \underrightarrow{\cC_1} \to \underrightarrow{\cC_2}$ admits a right adjoint quasi-functor
 $\cF_* \in \cT(\underrightarrow{\cC_2}, \underrightarrow{\cC_1})$ (\cite{dri}, \S 14).  If $\Ho(\cG)$ is a right adjoint to $\Ho(\cF)$, for every $X\in \underrightarrow{\cC_2}$, $\Ho(\cF_*)(X)\simeq  \Ho(\cG)(X)$ in $\Ho(\underrightarrow{\cC_1})$. Thus   $ \Ho(\cF_*)$ takes 
 $\Ho(\cC_2)$ into the essential image of $ \Ho(\cC_1)\mono \Ho(\underrightarrow{\cC_1})$. Left adjoint functors are treated similarly.
 \end{proof} 
 
 {\bf Conventions:}  All DG-categories we consider in this paper are of the following 2 types: ($i$)  $\cU$-small DG-categories (small DG categories for short), ($ii$) $\cU$-cocomplete compactly generated $\cV$-small DG $\cU$-categories (cocomplete compactly generated DG categories for short). 
 We will write $DGcat$ for $DGcat_\cU$.  All  fields, rings, modules that we will consider assumed to be $\cU$-small.


\subsection{DG category of \'Etale Voevodsky motives.}\label{dg.mot}
  Let  $A$  a commutative ring and let $k$ be a field of finite \'etale homological dimension with
respect to $A$, {\it i.e.} there exists an integer $N$ such that,
for every discrete $A[Gal(\overline k/k)]$-module $M$ and every
integer $m>N$,
\begin{equation}\label{finiteness}
 H^m(Gal(\overline k/k), M)= 0 .
 \end{equation}

Let $Sm_k$ be the category of smooth varieties over $k$. We will write $A_{tr}[Sm]= A_{tr}[Sm_k]$ for the category whose objects are smooth varieties and morphisms are finite correspondences with coefficients in $A$  (\cite{bv}, \S 2.1.2). 
This an $A$-linear additive category and as such it can be 
viewed as a DG category over $A$.  Let  $PSh_{tr}=PSh_{tr}^A(Sm_k)$  be 
 the (abelian) category  of presheaves of $A$-modules with transfers on
$Sm_k$ {\it i.e.}, the category of contravariant $A$-linear functors $A_{tr}[Sm]\to Mod(A)$,  and let  $D(PSh_{tr})$ be its derived category.  We endow $D(PSh_{tr})$ with a DG structure
$$\Ho(\underrightarrow{A_{tr}[Sm]}) \simeq \bD(A_{tr}[Sm])\simeq D(PSh_{tr}).$$
\begin{rem} 
The Yoneda embedding defines  a homotopy equivalence  $\underrightarrow{A_{tr}[Sm]}\iso D_{dg}(PSh_{tr})$ (\cite{bv}, \S 1.6.4, 1.8).
\end{rem}
 If $X\in Sm_k$  we write $A_{tr}[X]$ for the presheaf represented by $X$.
Consider the following types of complexes in  $PSh_{tr}$: 

 $ (\Delta)$   The 2-term complexes   $A_{tr}[X\times \bA^1] \to A_{tr}[X]$,   where $X\in Sm_k$ and the differential comes from the projection  $X\times \bA^1 \to X$. 

 (Et) The complexes of the form $ A_{tr}[U_{\cdot}] \to A_{tr}[X],$ where $U_{\cdot} \to X$, $U_i\in Sm_k$, is a {\it hypercovering} for the \'etale
topology (\cite{sga4}, \Rmnum{5}  \S 7.3) and  $A_{tr}[U_{\cdot}]$ is the
corresponding normalized complex.  

Let  $I^{ \Delta}_{tr}$, $I^{\et}_{tr}$, $I^{et, \Delta}_{tr}$ be the subcategories of  $\underrightarrow{A_{tr}[Sm]}$ generated by the objects of the corresponding types. We define the {\it  DG category $ D\cM^{\eff}_{\et} (k; A)$ of effective \'etale Voevodsky motives}   to be the DG quotient of 
$\underrightarrow{A_{tr}[Sm]}$ by $I^{et, \Delta}_{tr} $.   The triangulated category $\Ho( D\cM^{\eff}_{\et} (k; A))\simeq D(PSh_{tr})/\Ho(I^{et, \Delta}_{tr} )$ is denoted by $DM^{\eff}_{\et} (k; A)$. By abuse of notation we shall
write $A_{tr}[X]$ for the object of the quotient category $ DM^{\eff}_{\et} (k; A)$
corresponding to the presheaf $A_{tr}[X]$  ({\it i.e.} the motive of
 $X$   ).  The {\it triangulated category $ DM^{\eff}_{\gm, \et}(k; A)$ of effective geometric \'etale  motives} is the smallest idempotent complete strictly full triangulated
subcategory of $ DM^{\eff}_{\et}(k; A)$ that contains all objects of
the form $A_{tr}[X]$, where $X\in Sm_k$.  Being a full subcategory of category equipped with a DG structure $ DM^{\eff}_{\gm, \et}(k; A)$ inherits a DG structure that we denote by $ D\cM^{\eff}_{\gm, \et}(k; A)$.  

The tensor structure on the category $A_{tr}[Sm]$ given by cartesian product of schemes induces a homotopy tensor structure on $ D\cM^{\eff}_{\gm, \et}(k; A)$ , $ D\cM^{\eff}_{\et}(k; A)$ (\cite{bv}, \S 1.9, 2.3.) In particular,   $ DM^{\eff}_{\gm, \et}(k; A)$, $ DM^{\eff}_{\et}(k; A)$ are tensor categories.

\begin{lm}\label{perfectmotives}
$ D\cM^{\eff}_{\et}(k; A)^{perf}=  D\cM^{\eff}_{\gm, \et}(k; A).$
\end{lm}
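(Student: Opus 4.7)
The plan is to deduce the lemma from a general theorem of Neeman: if $T$ is a compactly generated triangulated category and $J\subset T$ is a localizing subcategory generated, as a localizing subcategory, by a set of objects that are compact in $T$, then the Verdier quotient $T/J$ is compactly generated, and $(T/J)^{perf}$ is the idempotent closure of the essential image of $T^{perf}$ in $T/J$. Applied to $T=D(PSh_{tr})$ and $J=\Ho(I^{et,\Delta}_{tr})$, this identifies $DM^{\eff}_{\et}(k;A)^{perf}$ with the idempotent closure of the thick triangulated subcategory generated by the motives $A_{tr}[X]$ for $X\in Sm_k$; that is precisely $DM^{\eff}_{\gm,\et}(k;A)$ by its definition. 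The DG version of the identification is then automatic, since $D\cM^{\eff}_{\gm,\et}(k;A)$ is defined as the full DG subcategory of $D\cM^{\eff}_{\et}(k;A)$ on the objects of $DM^{\eff}_{\gm,\et}(k;A)$.

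The first hypothesis of Neeman's theorem is that $D(PSh_{tr})$ is compactly generated by the representable presheaves $\{A_{tr}[X]\}_{X\in Sm_k}$. This is standard: by Yoneda, for any complex of presheaves $F$ one has $\Hom_{D(PSh_{tr})}(A_{tr}[X],F)=H^0(F(X))$, and this functor commutes with arbitrary direct sums in $F$.

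The second and more delicate hypothesis is that the localizing subcategory $\Ho(I^{et,\Delta}_{tr})$ is generated, as a localizing subcategory, by objects of $D(PSh_{tr})$ that are compact there. The generators of type $(\Delta)$ are two-term complexes of representables, hence already compact. The generators of type $(Et)$ attached to an \'etale hypercovering $U_\cdot\to X$ may be unbounded, so the individual cones need not be compact. Here the hypothesis of finite \'etale homological dimension of $k$ with respect to $A$ enters: the bound $N$ in (\ref{finiteness}) ensures that \'etale sheafification of presheaves with transfers has cohomological dimension at most $N$. Consequently, every hypercovering-type generator is equivalent, inside the localizing subcategory, to a bounded \v{C}ech-type cone $A_{tr}[\check{C}(U\to X)]\to A_{tr}[X]$ attached to a finite \'etale cover $U\to X$; these \v{C}ech cones are bounded complexes of representables, and hence are compact in $D(PSh_{tr})$.

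The main obstacle in this plan is the second hypothesis: one must show that $\Ho(I^{et,\Delta}_{tr})$ admits a compact system of generators, by truncating an arbitrary \'etale hypercovering to a bounded \v{C}ech complex of a finite cover inside the localizing subcategory it generates. This is the only step where the finiteness assumption (\ref{finiteness}) plays an essential role. Once it is in place, Neeman's theorem closes out the argument on the level of triangulated categories, and the passage to DG enhancements follows at once from the defining universal property (\ref{dgquotient}) of the DG quotient $D\cM^{\eff}_{\et}(k;A)$.
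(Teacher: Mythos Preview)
Your approach has a real gap at the crucial step. To invoke Neeman's theorem you must show that $\Ho(I^{et,\Delta}_{tr})$ is generated, as a localizing subcategory of $D(PSh_{tr})$, by objects compact in $D(PSh_{tr})$. For the $(\Delta)$ part this is clear, but for the $(Et)$ part your proposed generators do not work. A truncated \v{C}ech complex of an \'etale cover $U\to X$ is compact, but it does \emph{not} lie in $I^{et}_{tr}$: its \'etale sheafification is not acyclic. The full \v{C}ech complex does lie in $I^{et}_{tr}$ but is unbounded, hence not compact. Your sentence ``every hypercovering-type generator is equivalent, inside the localizing subcategory, to a bounded \v{C}ech-type cone'' is precisely the assertion that needs proof, and finite cohomological dimension of $k$ does not yield it in the way you indicate. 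Whether $I^{et}_{tr}$ is compactly generated inside $D(PSh_{tr})$ is essentially as hard as the lemma itself.

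The paper sidesteps this problem. Using the identification $D_{dg}(PSh_{tr})/I^{et}_{tr}\iso D_{dg}(Sh^{et}_{tr})$ from \cite{bv}, one rewrites
\[
D\cM^{\eff}_{\et}(k;A)\iso D_{dg}(Sh^{et}_{tr})/I^{\Delta}_{tr},
\]
so that only the $(\Delta)$-quotient remains. The finiteness hypothesis (\ref{finiteness}) is then used \emph{not} to truncate hypercoverings, but to show directly that each $A_{tr}[X]$ is compact in $D(Sh^{et}_{tr})$: since $\Hom(A_{tr}[X],F[m])\simeq R^m\Gamma_{\et}(X,F)$ and the latter vanishes for $m>N'=N+2\dim X+1$, a d\'evissage via canonical truncations reduces commutation with direct sums to the case of a single sheaf, where it is standard. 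Now $I^{\Delta}_{tr}$ is manifestly generated by compact objects, so \cite[Prop.~1.4.2]{bv} shows both that the $A_{tr}[X]$ remain compact in the quotient and that the compact objects are exactly the idempotent completion of the triangulated subcategory they generate. The moral: pass to sheaves first, so that the delicate \'etale localization is absorbed into an equivalence rather than analyzed as a compactly generated kernel.
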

\begin{proof}  The \'etale coverings define a Grothendieck topology on $A_{tr}[Sm]$   ([BV], \S 4.3).    Let  $Sh^{\et}_{  tr} \subset PSh^{\et}_{  tr}$ be the category of sheaves with transfers.  
    According to ([BV], \S 1.11; 4.3) the sheafification functor $ PSh^{\et}_{  tr} \to  Sh^{\et}_{  tr}$ induces a homotopy equivalence  
         $$D_{dg}(PSh_{tr})/I^{\et}_{tr} \iso D_{dg}(Sh^{\et}_{  tr}).$$
         Hence,
          \begin{equation}\label{eq111}
          D\cM^{\eff}_{\et} (k; A) \iso D_{dg}(Sh^{\et}_{  tr})/I^{\Delta}_{tr}.
          \end{equation}
   Let us check that, for every $X\in Sm_k$, the sheaf  $A_{tr}[X]$ is
 a compact object of $ D(Sh^{\et}_{tr})$ {\it i.e.}, for every $F_i\in   D(Sh^{\et}_{tr})$, $i\in I$, and every integer $m$, the canonical morphism 
 $$  \bigoplus_I Hom_{D(Sh^{\et}_{tr})}(A_{tr}[X], F_i [m])\simeq   \bigoplus R^m\Gamma (X, F_i) \to R^m\Gamma (X, \bigoplus F_i)$$
 is an isomorphism. If each $F_i$ is a single sheaf the claim follows from (\cite{sga4}, \Rmnum{7}, 3.3 ) (and holds for {\it any } field $k$).
 On the other hand, under  (\ref{finiteness}), for every sheaf $F$ of $A$-modules  and every integer $l>N+2dim\, X +1= :N'$,
$H_{\et}^l(X, F)= 0 $ (\cite{sga4}, \Rmnum{10}, 4.3 ) . Thus, replacing each $F_i$ by its canonical
truncation we may assume that $F_i$ is supported in homological
degrees between $m-N' $ and $m$. Our claim follows by d\'evissage.

In particular, the subcategory $\Ho(I^{\Delta}_{tr})\subset  D(Sh^{\et}_{  tr})$ is generated by compact objects. Hence, by  (\cite{bv}, \S 1.4.2, Proposition (ii))  the sheaves $A_{tr}[X]$ remain compact in the quotient category $D(Sh^{\et}_{  tr})/\Ho(I^{\Delta}_{tr})$. Part (i) of the same proposition completes the proof.
    \end{proof}
       \begin{cor}\label{compgen}
 The category  $DM^{\eff}_{\et} (k;  A) $  is compactly generated.
\end{cor}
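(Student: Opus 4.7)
The plan is to deduce this directly from Lemma \ref{perfectmotives} together with a standard compact generation principle for DG quotients.

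First I would recall the starting point: by construction, $D\cM^{\eff}_{\et}(k;A)$ is the DG quotient of $\underrightarrow{A_{tr}[Sm]}$ by the full subcategory $I^{\et,\Delta}_{tr}$. The DG category $\underrightarrow{A_{tr}[Sm]}$ is cocomplete and compactly generated by the essentially $\cU$-small family $\{A_{tr}[X] : X \in Sm_k\}$, since by the Yoneda embedding $\underrightarrow{A_{tr}[Sm]} \simeq D_{dg}(PSh_{tr})$ and the representable presheaves compactly generate the derived category of presheaves.

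Next I would observe that $I^{\et,\Delta}_{tr}$ is, by definition, generated (as a cocomplete triangulated subcategory) by the cones of the complexes of types $(\Delta)$ and $(\mathrm{Et})$ listed in \S\ref{dg.mot}. The complexes of type $(\Delta)$ are obviously compact since they are bounded complexes of representables. For the complexes of type $(\mathrm{Et})$ one uses the same argument given in the proof of Lemma \ref{perfectmotives}: under the finite $\et$-cohomological dimension hypothesis \eqref{finiteness}, each hypercovering complex $A_{tr}[U_\bcdot] \to A_{tr}[X]$ is, after truncation, a perfect object. Hence $I^{\et,\Delta}_{tr}$ is generated by a $\cU$-small set of compact objects of $\underrightarrow{A_{tr}[Sm]}$.

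Now I would invoke the general fact (\cite{bv}, \S 1.4) that if $\cC$ is a compactly generated pretriangulated cocomplete DG category and $\cB \subset \cC$ is generated by a $\cU$-small set of compact objects, then the DG quotient $\cC/\cB$ remains cocomplete and the images of the compact generators of $\cC$ form a set of compact generators of $\cC/\cB$. Applying this to $\cC = \underrightarrow{A_{tr}[Sm]}$ and $\cB = I^{\et,\Delta}_{tr}$ yields that $D\cM^{\eff}_{\et}(k;A)$ is compactly generated by the images of $A_{tr}[X]$, $X\in Sm_k$; passing to homotopy categories gives the claim for $DM^{\eff}_{\et}(k;A)$. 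There is no real obstacle here — everything was arranged in Lemma \ref{perfectmotives}; the only mild subtlety is ensuring that the generators of $I^{\et,\Delta}_{tr}$ themselves are compact in $\underrightarrow{A_{tr}[Sm]}$, which is precisely where the hypothesis \eqref{finiteness} is used.
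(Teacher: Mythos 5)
There is a genuine gap in your reduction. You try to re-derive (part of) Lemma~\ref{perfectmotives} via a single DG quotient $\underrightarrow{A_{tr}[Sm]}/I^{\et,\Delta}_{tr}$, which requires that $I^{\et,\Delta}_{tr}$ be generated by a set of objects that are compact in $\underrightarrow{A_{tr}[Sm]}\simeq D_{dg}(PSh_{tr})$. The $(\Delta)$ complexes are indeed bounded complexes of representables and hence compact. But the $(\mathrm{Et})$ complexes are not: an \'etale hypercovering $U_\bcdot\to X$ is in general an infinite simplicial object, so $A_{tr}[U_\bcdot]\to A_{tr}[X]$ is an unbounded-below complex of representables, and such a complex is \emph{not} compact in $D(PSh_{tr})$, even though every term is. The phrase ``after truncation, a perfect object'' misreads the proof of Lemma~\ref{perfectmotives}: the truncation performed there is applied to the target complexes $F_i$ (exploiting the finite \'etale cohomological dimension hypothesis~\eqref{finiteness}), in order to show that $A_{tr}[X]$ remains compact in $D(Sh^{\et}_{tr})$ after the sheafification localization. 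It is not applied to the hypercovering complexes --- truncating those would change the subcategory $I^{\et}_{tr}$ and hence the quotient.

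The paper's own proof of the corollary is one line, and that is the right level of effort: Lemma~\ref{perfectmotives} already establishes that the objects $A_{tr}[X]$ are compact in $DM^{\eff}_{\et}(k;A)$, and they generate it (they are images of generators of $D(PSh_{tr})$ under a localization that commutes with direct sums). The delicate work is entirely in Lemma~\ref{perfectmotives}, and it proceeds in two stages: first localize $D(PSh_{tr})\to D(Sh^{\et}_{tr})$, where the finiteness hypothesis is used to show $A_{tr}[X]$ stays compact; then localize by $I^{\Delta}_{tr}$, which at that point really \emph{is} generated by compact objects, so the quotient principle from \cite{bv}~\S1.4.2 applies. Your one-step version cannot apply the quotient principle because the kernel of the combined localization is not visibly generated by compact objects of $D(PSh_{tr})$. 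Simply citing Lemma~\ref{perfectmotives} for compactness of the generators and noting they generate would close the gap.
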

Indeed, the category  $DM^{\eff}_{\et} (k;  A) $ is generated by
motives of smooth varieties which are compact by the lemma.
\begin{cor}\label{rational} If $(D\cM^{\eff}_{\gm, \et}(k; \bZ) \otimes \bQ)^{\kappa}$ denotes the homotopy idempotent completion (\cite{bv}, \S 1.6.2) of $D\cM^{\eff}_{\gm, \et}(k; \bZ) \otimes \bQ $, the canonical morphism 
$$(D\cM^{\eff}_{\gm, \et}(k; \bZ) \otimes \bQ)^{\kappa} \to D\cM^{\eff}_{\gm, \et}(k; \bQ)$$
is a homotopy equivalence.
\end{cor}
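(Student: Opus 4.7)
The plan is to show that the canonical functor
$$F:(D\cM^{\eff}_{\gm, \et}(k; \bZ) \otimes \bQ)^{\kappa} \to D\cM^{\eff}_{\gm, \et}(k; \bQ)$$
is a homotopy equivalence of DG categories by verifying two conditions: (a) quasi-isomorphism on Hom-complexes (inherited from the functor before $\kappa$-completion), and (b) essential surjectivity on homotopy categories. For (b), observe that $D\cM^{\eff}_{\gm, \et}(k; \bQ)$ is defined as the smallest idempotent complete strictly full triangulated subcategory of $DM^{\eff}_{\et}(k; \bQ)$ containing the motives $\bQ_{tr}[X]$ for $X\in Sm_k$. The functor $F$ sends $\bZ_{tr}[X]\otimes \bQ$ to $\bQ_{tr}[X]$, so the essential image of $F$ generates the target modulo direct summands; after passing to $\kappa$-completion, essential surjectivity is immediate.

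For (a), the key reduction is to show that for all $X, Y\in Sm_k$ and $n\in \bZ$, the natural map
$$\Hom_{DM^{\eff}_{\et}(k; \bZ)}(\bZ_{tr}[X], \bZ_{tr}[Y][n]) \otimes_\bZ \bQ \to \Hom_{DM^{\eff}_{\et}(k; \bQ)}(\bQ_{tr}[X], \bQ_{tr}[Y][n])$$
is an isomorphism. Using the identification (\ref{eq111}), both sides compute \'etale hypercohomology of $X$ with coefficients in the $\bA^1$-localized sheafification of $\bZ_{tr}[Y]$, respectively $\bQ_{tr}[Y]$. I would establish the isomorphism via three observations: first, the presheaf identity $\bZ_{tr}[Y]\otimes_\bZ \bQ = \bQ_{tr}[Y]$, which also passes to \'etale sheafification since sheafification commutes with filtered colimits and $\bQ$ is flat over $\bZ$; second, the functor $I^{\Delta}$-localization (i.e., $\bA^1$-invariance) commutes with $\otimes_\bZ \bQ$, again by flatness of $\bQ$ and the fact that the localization is built from cones of maps involving $\bZ_{tr}[\bA^1\times -]$; third, the compactness of $\bZ_{tr}[X]$ in $D(Sh^{\et}_{tr})$ established in the proof of Lemma \ref{perfectmotives} (which uses the finite \'etale cohomological dimension hypothesis (\ref{finiteness})) implies that $R\Gamma_{\et}(X, -)$ commutes with arbitrary filtered colimits, in particular with tensoring by $\bQ = \operatorname{colim}_n \tfrac{1}{n}\bZ$.

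The main obstacle is verifying the last step carefully, since taking the DG quotient by $I^{\Delta}_{tr}$ and computing Hom-complexes after localization requires checking commutativity of these operations with $\otimes_\bZ \bQ$ at the level of DG categories, not merely at the level of objects. Once (a) and (b) are in place, the conclusion follows from the general principle that a DG quasi-functor between small pretriangulated DG categories which is quasi-fully-faithful and essentially surjective on homotopy idempotent completions is a homotopy equivalence after such completion; because the target $D\cM^{\eff}_{\gm, \et}(k;\bQ)$ is already idempotent complete by definition, the $\kappa$-completion on the right is trivial, and $F$ itself becomes a homotopy equivalence.
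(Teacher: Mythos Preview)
Your proposal is correct and follows essentially the same route as the paper: reduce to showing that
\[
\Hom_{DM^{\eff}_{\et}(k;\bZ)}(\bZ_{tr}[X],\bZ_{tr}[Y][n])\otimes\bQ \;\longrightarrow\; \Hom_{DM^{\eff}_{\et}(k;\bQ)}(\bQ_{tr}[X],\bQ_{tr}[Y][n])
\]
is an isomorphism, and conclude by compactness of $\bZ_{tr}[X]$ together with $\bQ=\colim_n \tfrac{1}{n}\bZ$.

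The paper's execution is more direct and in particular sidesteps what you flag as the ``main obstacle.'' Rather than unwinding to $D(Sh^{\et}_{tr})$, arguing that sheafification and $\bA^1$-localization commute with $\otimes\,\bQ$, and then worrying about DG quotients, the paper stays entirely inside $DM^{\eff}_{\et}(k;\bZ)$: it regards $\bQ_{tr}[Y]$ as the object of $DM^{\eff}_{\et}(k;\bZ)$ given by the sequential colimit $\bZ_{tr}[Y]\xrightarrow{2}\bZ_{tr}[Y]\xrightarrow{3}\cdots$, and then invokes Lemma~\ref{perfectmotives} (compactness of $\bZ_{tr}[X]$ in $DM^{\eff}_{\et}(k;\bZ)$, not merely in $D(Sh^{\et}_{tr})$) to pull the colimit out of $\Hom$. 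This yields the left-hand side immediately, and the identification $\Hom_{DM^{\eff}_{\et}(k;\bZ)}(\bZ_{tr}[X],\bQ_{tr}[Y])\simeq \Hom_{DM^{\eff}_{\et}(k;\bQ)}(\bQ_{tr}[X],\bQ_{tr}[Y])$ is taken as the evident comparison. Your three observations amount to a more laborious proof of the same compactness-plus-colimit step; you can simply replace them by a direct appeal to Lemma~\ref{perfectmotives}.
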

\begin{proof}
It suffices to check that for every $X,Y\in Sm$ the map 
$$Hom_{DM^{\eff}_{\et} (k; \bZ)} ( \bZ_{tr}[X],  \bZ_{tr}[Y]) \otimes \bQ \to   Hom_{DM^{\eff}_{\et}(k; \bZ)} ( \bZ_{tr}[X],  \bQ_{tr}[Y])$$
$$ \iso  Hom_{DM^{\eff}_{ et}(k; \bQ)} ( \bQ_{tr}[X],  \bQ_{tr}[Y])$$
is an isomorphism. Write $\bQ_{tr}[Y]$ as the direct limit of the diagram $\bZ_{tr}[Y]\rar{2}\bZ_{tr}[Y]\rar{3}\bZ_{tr}[Y] \rar{4}\cdots$. Since $\bZ_{tr}[X]$ is perfect,  $Hom_{DM^{\eff}_{\et}(k; \bZ)} ( \bZ_{tr}[X],  \bQ_{tr}[Y])$
is the direct limit of $$Hom_{DM^{\eff}_{\et}(k; \bZ)} ( \bZ_{tr}[X],  \bZ_{tr}[Y])\rar{2} Hom_{DM^{\eff}_{\et}(k; \bZ)} ( \bZ_{tr}[X],  \bZ_{tr}[Y])\rar{3} \cdots .$$
  \end{proof}
  \begin{rem} Voevodsky's category  $DM^{\eff}_{-, et} (k;  A) $ is the image of $D^-(Sh^{\et}_{  tr})$  in  $DM^{\eff}_{\et} (k;  A) $.
\end{rem}
Replacing in the definition of 
$ D\cM^{\eff}_{\et}(k; A)$ \'etale hypercoverings by Nisnevich ones we obtain {\it the DG category $ D\cM^{\eff}(k; A)$ of effective Voevodsky motives} (\cite{bv}). The DG quasi-functor
  $$ D\cM^{\eff}(k; A)\to  D\cM^{\eff}_{\et}(k; A)$$ is a homotopy  equivalence for every $A\supset \bQ$. This is derived from the equivalence $Sh_{tr}^{et \, A} (Sm_k) \iso  Sh_{tr}^{Nis \, A} (Sm_k)$ (\cite{v1}, \S 3.3).   
\subsection{Base change.}\label{bcg}
Let $k\subset k'$ be a field extension. The functor
\begin{equation}\label{basechange}
A_{tr}[Sm_k] \to A_{tr}[Sm_{k'}]
\end{equation}
that takes a smooth $k$-scheme $X$ to $k'$-scheme $X\times _{spec\, k} spec \, k'$ induces DG quasi-functors
\begin{equation}\label{pb}
f^*:  D\cM^{\eff}(k; A)\to D\cM^{\eff}(k'; A), \quad
f^*:  D\cM^{\eff}_{\et} (k;  A) \to D\cM^{\eff}_{\et} (k';  A). 
\end{equation}
We want to describe explicitly the effect  $f^*$ on a complex of presheaves with transfers.   
Consider the left Kan extension of (\ref{basechange})
$$f^{-1}: PSh_{tr}(Sm_k) \to  PSh_{tr}(Sm_{k'}).$$
If  $E\in PSh_{tr}(Sm_k)$ is a presheaf  on the category $A_{tr}[Sm_k]$,
$$ f^{-1} E(X) = \underset {(Y, g\in cor_k(X, Y) )} {\colim}  F(Y),$$
where $X$ is a smooth scheme over $k'$ and the colimit is taken over the category ${\cC}$ all  pairs  $(Y, g\in cor_k(X, Y)= cor_{k'}(X, Y\times _{spec\, k} spec \, k' ))$ with $Y\in A_{tr}[Sm_{k'}]$; $Hom_{{\cC}}((Y, g), (Y', g'))= \{ h \in cor_k(Y, Y'): h\circ g = g' \}$.  One can easily check that the category ${\cC}$  is cofiltrant (\cite{ks}, \S 3.1). 
As a consequence, the functor $f^{-1}$ is exact.
\begin{rem} The subcategory  ${\cC}' \subset {\cC}$ of pairs $(Y, g)$ where $Y \in Sm_k$, $g$ is a morphism $X\to Y$ of $k$-schemes,  and
$Hom_{{\cC}'}((Y, g), (Y', g'))= \{ h \in Mor_k(Y, Y'): h\circ g = g' \}$
 is co-cofinal in  ${\cC}$ ([KS], \S 2.5).  In particular,
 $$ f^{-1} E(X)= \underset {g: X \to Y } {\colim}  F(Y). $$
\end{rem}
As an exact functor $ f^{-1}$ extends in the obvious way to a DG quasi-functor between the DG derived categories of presheaves and this extension fits into the following commutative diagram
    $$
\def\normalbaselines{\baselineskip20pt
\lineskip3pt  \lineskiplimit3pt}
\def\mapright#1{\smash{
\mathop{\to}\limits^{#1}}}
\def\mapdown#1{\Big\downarrow\rlap
{$\vcenter{\hbox{$\scriptstyle#1$}}$}}
\begin{matrix}
D\cM^{\eff}_{\circ} (k;  A) &\lar{}&  \underrightarrow{A_{tr}[Sm_k]}  &    \iso &  D_{dg}(PSh_{tr}(Sm_{k})) \cr
  \mapdown{ f^*}&& \mapdown{ } & &\mapdown{f^{-1}} \cr
D\cM^{\eff}_{\circ} (k';  A) &\lar{}& \underrightarrow{A_{tr}[Sm_{k'}]}  &    \iso &  D_{dg}(PSh_{tr}(Sm_{k'})),
\end{matrix}
$$
where $\circ = et$ or $Nis$, the left vertical arrow is induced by (\ref{basechange}), and the horizontal homotopy equivalences are induced by the Yoneda embeddings.  

 \subsection{h-topology.}\label{h.t}  We shall recall below  a different description of the category $D\cM^{\eff}_{\et} (k; A)$
that does not involve finite correspondences.
 This description will be used  in our construction of the realization functors. In the rest of this section  $char\, k =0$. 

  Let $A[Sm_k]=A[Sm] $ be the category, whose objects are smooth varieties over $k$  and whose morphisms are defined by the formula
       $$Hom_{ A[Sm]}(A[\sqcup_i X_i], A[Y])= \oplus _i A[Mor(X_i,Y)],$$
       where $X_i$ are connected varieties.   
    Denote by $PSh$ the category of contravariant $A$-linear functors $A[Sm]\to Mod(A)$. Consider the pair of adjoint DG quasi-functors
   $$ \Phi^*:  \underrightarrow{A[Sm]}\to   \underrightarrow{A_{tr}[Sm]}$$
   $$\Phi_*:  \underrightarrow{A_{tr}[Sm]} \to  \underrightarrow{A[Sm]}$$
   induced by  $\Phi: A[Sm] \to A_{tr}[Sm]$. 
  The triangulated functor 
  $$\Ho(\Phi_*) : \Ho( \underrightarrow{A_{tr}[Sm]}) \simeq D(PSh_{tr})\to  D(PSh)\simeq \Ho( \underrightarrow{A[Sm]}) $$
  is induced by  the forgetful functor $PSh_{tr} \to  PSh$;  $\Ho(\Phi^*)$ is left adjoint to $\Ho(\Phi_*)$.
  
 Let $I^h\subset  \underrightarrow{A[Sm]} $ (resp. $I_{tr}^h\subset   \underrightarrow{A_{tr}[Sm]} $) be
the DG subcategory generated by objects of the form

(h)   $A[U_{\cdot}] \to A[X]$
 (resp.  $A_{tr}[U_{\cdot}] \to A_{tr}[X]$),
 
where $U_{\cdot} \to X$, $U_i\in Sm_k$, is a hypercovering for the h-topology
(\cite{sv}, \S 10), 
 and let  $I^{h, \Delta} \subset  \underrightarrow{A[Sm]}  $ (resp. $I_{tr}^h\subset   \underrightarrow{A_{tr}[Sm]} $)  be the
subcategory generated by $I^h$ (resp. $I^h_{tr}$) and objects of the form $A[X\times \bA^1] \to A[X]$
(resp. of type $(\Delta)$.)  
The functor $\Phi^*$
takes  $I^h$ into $I_{tr}^h$ and, hence, descends to a DG quasi-functor:
$$\overline \Phi^*:  \underrightarrow{A[Sm]} /I^h \to \underrightarrow{A_{tr}[Sm]} /I_{tr}^h.$$

The following  proposition is essentially a reformulation of a result proven by Voevodsky (\cite{v1}, Theorem 4.1.12).  
\begin{pr}\label{htopologyandtransfers} 
a) The quasi-functor $\overline \Phi^*$ is a homotopy equivalence.\\
b) The composition $\Ho(I_{tr}^h) \to D(PSh_{tr}) \to DM^{\eff}_{\et} (k; A)$ is zero. \\
c) The functor $\overline \Phi^*$ yields  a homotopy equivalences:
$$ \underrightarrow{A[Sm]} /I^{h, \Delta}\iso   \underrightarrow{A_{tr}[Sm]}  /I^{h, \Delta}_{tr} \iso D\cM^{\eff}_{\et} (k; A).$$
\end{pr}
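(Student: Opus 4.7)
The plan is to reduce Proposition \ref{htopologyandtransfers} to the comparison theorem of Voevodsky cited in the statement, whose essential content is that in characteristic zero every h-sheaf of $A$-modules carries a canonical structure of transfers, so that the forgetful functor $D(Sh^h_{tr}) \to D(Sh^h)$ is an equivalence of triangulated categories.

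For part (a), I would first upgrade the argument used in the proof of Lemma \ref{perfectmotives} to both $\underrightarrow{A[Sm]}$ and $\underrightarrow{A_{tr}[Sm]}$ equipped with the h-topology: h-sheafification induces homotopy equivalences
$$\underrightarrow{A[Sm]}/I^h \iso D_{dg}(Sh^h(Sm_k,A)), \qquad \underrightarrow{A_{tr}[Sm]}/I^h_{tr} \iso D_{dg}(Sh^h_{tr}(Sm_k,A)),$$
where one uses that h-coverings form a basis of their topology and that representable presheaves become compact after sheafification (an analogue of the cohomological finiteness used in Lemma \ref{perfectmotives}). Under these identifications $\overline\Phi^*$ is the left adjoint of the forgetful functor, and the desired equivalence is exactly Voevodsky's theorem.

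For part (b), I would use the fact that any h-hypercovering $U_\bullet \to X$ (with $X, U_i \in Sm_k$) is refined by a composite of an \'etale covering and a proper cdh covering; de Jong's alteration theorem, available in characteristic zero, lets us restrict attention to hypercovers whose terms are smooth. The \'etale part is annihilated in $DM^{\eff}_{\et}(k;A)$ by construction, and the proper/alteration contribution dies by another application of Voevodsky's comparison: the natural map from the h-sheafification to the \'etale sheafification of a representable with transfers is an isomorphism in $DM^{\eff}_{\et}(k;A)$. Concretely, since (a) identifies the h-quotient of $\underrightarrow{A_{tr}[Sm]}$ with $D_{dg}(Sh^h_{tr})\simeq D_{dg}(Sh^{\et}_{tr})$, the class of an h-hypercover complex is already zero in $D_{dg}(Sh^{\et}_{tr})$, hence a fortiori in $DM^{\eff}_{\et}(k;A)$.

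Part (c) is then formal. The first homotopy equivalence is obtained from (a) by passing to the further quotient by the $\bA^1$-relation (which is compatible with $\overline\Phi^*$, since $\Phi^*$ commutes with products). For the second, (b) implies $\Ho(I^{h,\Delta}_{tr})$ maps to zero in $DM^{\eff}_{\et}(k;A)$, giving a quasi-functor $\underrightarrow{A_{tr}[Sm]}/I^{h,\Delta}_{tr} \to D\cM^{\eff}_{\et}(k;A)$ by the universal property (\ref{dgquotient}); conversely, every \'etale hypercover is an h-hypercover, so $I^{\et,\Delta}_{tr} \subset I^{h,\Delta}_{tr}$ provides the opposite quasi-functor. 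Both are induced by the identity on $\underrightarrow{A_{tr}[Sm]}$, hence mutually inverse. The main obstacle is the precise transfer of Voevodsky's statement [v1, Thm.~4.1.12] into the DG language used here, together with the verification that h-sheafification yields a homotopy equivalence of DG quotients rather than a mere triangulated equivalence; everything else is a formal manipulation of universal properties of DG localizations.
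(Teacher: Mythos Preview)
Your proposal reaches the right conclusion but takes a more circuitous route than the paper, and along the way leans on claims that need more justification than you give them.

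For part (a), the paper does \emph{not} identify either DG quotient with a derived category of $h$-sheaves. Instead it proceeds by adjunction: the single formula $A^h[X]\simeq A_{tr}[X]$ (the $h$-sheafification of a representable presheaf is the representable presheaf with transfers) shows that the forgetful functor $\Phi_*$ carries $I^h_{tr}$ into $I^h$ and hence descends to $\overline\Phi_*$, right adjoint to $\overline\Phi^*$. The same formula gives $\Ho(\overline\Phi_*)\circ\Ho(\overline\Phi^*)\simeq \mathrm{Id}$, whence $\overline\Phi^*$ is fully faithful; essential surjectivity is obvious. This avoids entirely the two steps you flag as obstacles: there is no need to prove that $h$-sheafification induces a DG homotopy equivalence $\underrightarrow{A_{tr}[Sm]}/I^h_{tr}\simeq D_{dg}(Sh^h_{tr})$, nor to check that the $h$-topology is compatible with transfers in the sense required to make sense of $Sh^h_{tr}$. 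Your route can be made to work, but the compactness remark (``an analogue of Lemma~\ref{perfectmotives}'') is irrelevant to the sheafification equivalence, and the identification $\underrightarrow{A_{tr}[Sm]}/I^h_{tr}\simeq D_{dg}(Sh^h_{tr})$ is an extra lemma you would have to supply.

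For part (b), the paper simply invokes \cite{v1}, Theorem~4.1.12 directly; your first argument via decomposing $h$-covers into \'etale and proper cdh pieces (and de Jong---though in characteristic zero Hironaka suffices) is unnecessary, and your ``concretely'' argument presupposes an equivalence $D_{dg}(Sh^h_{tr})\simeq D_{dg}(Sh^{\et}_{tr})$ that is essentially the content of the cited theorem but which you have not isolated as a separate input. Part (c) is formal in both treatments.
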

\begin{proof}  We will derive the proposition from Voevodsky's results.
Let $Sh$ be the category of sheaves of $A$-modules on smooth schemes
equipped with the $h$-topology, and let  $A^h[X]$ be the $h$-sheaf associated with  $A[X]$. Then
 \begin{equation}\label{handtr}
A^h[X] \iso A_{tr}[X].
\end{equation}
 This follows by combining (\cite{v2}, Theorem 3.3.5, Proposition 3.3.6) and (\cite{bv}, 2.1.3) with the observation that
  the category of $h$-sheaves on smooth schemes is equivalent to the one on all $k$-schemes of finite type. The last assertion holds because
 every scheme of finite type over a field of characteristic $0$ admits a smooth $h$-cover.

 It follows from (\ref{handtr}) that the forgetful functor $\Ho(\Phi_*): D(PSh_{tr})\to  D(PSh) $ takes every complex in $\Ho(I_{tr}^h)$ to an object of  $\Ho(I^h)$ (\cite{bv}, 1.11). Consider the induced functor:
$$\Ho(\overline \Phi_*) : D(PSh_{tr})/ \Ho(I_{tr}^h) \to  D(PSh)/\Ho( I^h) . $$
We claim that $\Ho(\overline \Phi_*)$ and $\Ho(\overline \Phi^*)$ are inverse one
to the other. Indeed, (\ref{handtr}) implies that $ \Ho(\overline \Phi_*)
\circ \Ho(\overline \Phi^*) \simeq Id$. Next, since $\Ho(\Phi ^*)$ is left adjoint
to $\Ho(\Phi_*)$, $\Ho(\overline \Phi^*)$ is left adjoint to $\Ho(\overline \Phi_*)$.
This implies that $\Ho(\overline \Phi^*)$ is fully faithful. Since  $\Ho(\overline \Phi^*)$ is
obviously surjective, the first part of the proposition is proved.
Part b) follows from  ( \cite{v1}, Theorem 4.1.12).  The last part is a corollary of a) and b).
\end{proof}

\subsection{Compactifications.}\label{comp} Let $A[\overline {Sm}]$ be the category whose objects are triples
$(X, \overline X, j)$, where $\overline X$ is smooth proper scheme
over $\mathbb{C}$ and $j: X\to \overline X $ is an open dense embedding  such
that the complement $\overline X \backslash X$ is a divisor with
normal crossings. The space of morphisms between two such triples
$(X, \overline X, j)$ and $(X', \overline X', j')$  with connected $\overline X$ is freely
generated over $A$ by pairs $(f:X_i\to X', \, g:\overline X_i\to \overline X')$
such that $j' f= g j$.  In general,
  $$Hom_{ A[\overline {Sm}]}(A[ ( \sqcup X_i, \sqcup \overline X_i, j)], \cX')= \oplus_i  Hom_{ A[\overline {Sm}]}(A[(X_i, \overline X_i, j)], \cX'),$$
where $\overline X_i$ are connected and $\cX'=A[(X', \overline X', j')]$.  Let ${\cN}$ be
the DG  subcategory of $\underrightarrow
{A[\overline {Sm}]}$ generated by complexes of the form 

(N)  $A[X, \overline X', j_{X'}] \stackrel{(Id,g)}{\longrightarrow} A[X, \overline X, j_X].$ 

The functor $A[\overline {Sm}]\to A[Sm]$ that takes a triple $(X,
\overline X, j)$ to $X$ induces a  DG quasi-functor 
\begin{equation}\label{resolution}
 \underrightarrow{A[\overline {Sm}]}/{\cN}   \to
 \underrightarrow {A[Sm]}  .
\end{equation}
 \begin{lm}\label{hironaka}
The functor (\ref{resolution}) is a homotopy equivalence. 
\end{lm}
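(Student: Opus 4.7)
The plan is to verify that (\ref{resolution}) is essentially surjective on homotopy categories and a quasi-isomorphism on $\Hom$-complexes; since both source and target are ind-completions of ordinary additive categories (with transfers-free) concentrated in degree $0$, this will give the claimed homotopy equivalence.

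First, essential surjectivity: in characteristic $0$ every connected $X\in Sm_k$ admits, by Nagata compactification followed by Hironaka resolution, a triple $(X,\overline X,j)$ with $\overline X$ smooth proper and $\overline X\setminus X$ a normal crossings divisor. Hence $A[X]$ lies in the image of $\pi: A[\overline{Sm}] \to A[Sm]$, and since $\underrightarrow{A[Sm]}$ is generated by its image, the induced functor is essentially surjective.

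For full faithfulness one computes the $\Hom$-complex in the DG quotient $\underrightarrow{A[\overline{Sm}]}/\cN$ between $A[X,\overline X,j]$ and $A[Y,\overline Y,k]$ and shows it is quasi-isomorphic to $A[\Mor_k(X,Y)]$ placed in degree $0$. Using Drinfeld's model for the DG quotient, this reduces to the following geometric input. Given $f:X\to Y$ in $Sm_k$, let $Z\subset \overline X\times \overline Y$ be the closure of the graph of $k\circ f$; by Hironaka applied to the pair $(Z,X)$ we obtain a smooth proper $\overline X''$ with an open immersion $X\hookrightarrow\overline X''$ having NCD complement, together with a proper birational map $\overline X''\to Z$ which is the identity on $X$. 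The two projections yield a proper morphism $p:\overline X''\to \overline X$ with $p_{|X}=\id_X$ (so $(\id,p)$ is of type (N), hence an isomorphism in the quotient) and a proper morphism $q:\overline X''\to \overline Y$ with $q_{|X}=f$. The class $(f,q)\circ(\id,p)^{-1}$ is then the required lift of $f$, giving surjectivity on $H^0$.

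For injectivity on $H^0$ and vanishing in negative cohomological degrees, one exploits that any two triples $(f,g_1),(f,g_2)$ with the same underlying $f$ admit a common refinement $\overline X''\to \overline X$ of type (N) to which both extend; and since $g_1,g_2$ agree on the dense open $X$ and $\overline Y$ is separated, they pull back to the same morphism on $\overline X''$. More conceptually, the category of compactifications of a fixed $f$ that are of type (N) over $(X,\overline X,j)$ and dominate $(Y,\overline Y,k)$ is cofiltered (by the same graph-closure-plus-resolution trick), so the bar-resolution complex computing $\Hom$ in $\underrightarrow{A[\overline{Sm}]}/\cN$ is concentrated in degree $0$ with $H^0=A[\Mor_k(X,Y)]$.

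The main obstacle I anticipate is the bookkeeping of the last paragraph: to make the cofiltered cofinality statement rigorous inside Drinfeld's model of the DG quotient and to verify that the higher homotopy classes in the bar construction indeed vanish. This is essentially a formal but careful exercise in filteredness of Hironaka refinements, once the geometric lifting construction above is in place.
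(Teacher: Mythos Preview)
Your approach is correct in outline and uses the same geometric input as the paper (Hironaka plus cofiltered refinements of compactifications), but the paper organizes the argument differently and in a way that dissolves exactly the ``bookkeeping obstacle'' you flag at the end.

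The paper's key move is to observe that the set $\cS$ of morphisms $(\id,g):(X,\overline X',j')\to(X,\overline X,j)$ is a \emph{left multiplicative system} in $A[\overline{Sm}]$, and that Hironaka yields directly an equivalence of ordinary additive categories $A[\overline{Sm}]_\cS \simeq A[Sm]$. The rest is then a purely formal statement, proved in general: for any additive category $\cA$ with a left multiplicative system $\cS$, if $\cN\subset\underrightarrow{\cA}$ is generated by the two-term complexes $X'\stackrel{s}{\to}X$ with $s\in\cS$, then $\underrightarrow{\cA}/\cN\simeq\underrightarrow{\cA_\cS}$. The computation is
\[
\Hom_{\Ho(\underrightarrow{\cA}/\cN)}(X,Y[i])\;\simeq\;\underset{(C,g)}{\colim}\,\Hom_{\Ho(\underrightarrow{\cA})}(C,Y[i]),
\]
where $(C,g:C\to X)$ ranges over pairs with $\mathrm{cone}(g)\in\Ho(\cN^{perf})$; the Ore condition says precisely that the subcategory of pairs $(X',s)$ with $s\in\cS$ is cofinal, whence the colimit collapses to the localization formula and vanishes for $i\neq 0$.

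Your bar-resolution argument would in the end rediscover this same colimit, but the multiplicative-system packaging makes the cofinality and the higher vanishing automatic rather than something to be checked by hand inside Drinfeld's model. What your route buys is concreteness (you see the graph-closure construction explicitly); what the paper's buys is that the only geometric work is verifying the Ore conditions, after which everything is abstract nonsense.
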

\begin{proof}
Denote by ${\cS} \subset Mor(A[\overline {Sm}])$ the set of
morphisms of the form
\begin{equation}\label{pochti}
(X, \overline X', j_{X'}) \stackrel{(Id,g)}{\longrightarrow} (X,
\overline X, j_{X}).
\end{equation}
It follows from the Hironaka theorem on resolution of singularieties
that the set ${\cS}$ is a left multiplicative system and that, if $A[\overline {Sm}]_{\cS}$ is the
localization of $A[\overline {Sm}]$ by ${\cS}$ (\cite{ks}, \S 7),  the
functor $A[\overline {Sm}]_{\cS}\to  A[Sm]$  induced by $A[\overline {Sm}] \to  A[Sm]$ is
 an equivalence of categories.  The rest of the proof is a bit of abstract  nonsense.  It will suffice to
prove that for any additive category ${\cA}$ and a left
multiplicative system ${\cS} \subset Mor({\cA}) $ the functor
$$T: \Ho(\underrightarrow  {\cA}/{\cN})   \to \Ho(\underrightarrow  {\cA} _{\cS}) $$
is an equivalence of triangulated categories. Here ${\cN}$
denotes the DG  subcategory of $\underrightarrow  {\cA} $ generated by complexes of the form
$X'\stackrel{s}{\to} X$, where $X, X' \in {\cA}$ and $s\in {\cS}$.   
For every $X, Y \in {\cA} $, $i \in \bZ$ we have 
$$Hom _{ \Ho(\underrightarrow  {\cA}/{\cN}) } ( X, Y[i])\simeq Hom _{ \Ho(\underrightarrow  {\cA}^{perf}/{\cN}^{perf} ) } ( X, Y[i])\simeq  \colim \, Hom_{ \Ho(\underrightarrow  {\cA} )} ( C, Y[i]),$$
where the colimit is taken over the cofiltrant category $Q$ of pairs $(C\in  \Ho(\underrightarrow  {\cA}^{perf}), g: C \to X)$ with $cone(g)\in \Ho({\cN}^{perf})$. The subcategory $Q'\subset Q$ formed $(X'\in \cA,
s: X' \to X)$ with $s \in {\cS}$ is cofinal in $Q$. Hence
\begin{equation}\label{ressing}
Hom _{ \Ho(\underrightarrow  {\cA}/{\cN}) } ( X, Y[i])\simeq  \colim \, Hom _{ \Ho(\underrightarrow  {\cA}) } ( X', Y[i])\to Hom _{ \Ho(\underrightarrow  {\cA} _{\cS})} (X, Y[i]).
\end{equation}
(In particular, all groups are $0$ for $i\ne 0$.) 
Since objects of $\cA$ generate   $\Ho(\underrightarrow  {\cA})$,  $\Ho(\underrightarrow  {\cA}/{\cN})$   and are compact in the both categories   (\cite{bv},  Proposition 1.4.2) formula (\ref{ressing}) implies that $T$ is fully faithful.

It remains to check that $T$ is essentially surjective. Indeed, $T$
provides an equivalence  of $\Ho( \underrightarrow  {\cA}/{\cN}  ) $ with  a full triangulated subcategory of $\Ho(\underrightarrow  {\cA} _{\cS}  )  $ which is closed
under under small direct sums and contains ${\cA} _{\cS}  $.
Every subcategory with these properties coincides with  $\Ho(\underrightarrow  {\cA} _{\cS}) $.
\end{proof}

\subsection{DG realizations of $D\cM^{\eff}_{\et} (k; A)$.}  Let ${\cC}$
be a  pretriangulated  cocomplete compactly generated DG category over a commutative ring $A$, and let  $\cF: A[Sm] \to  \cC$ (resp. $\cF:  A[\overline {Sm}] \to \cC$)  be a DG quasi-functor. According to formula  (\ref{dgindcompl}) $\cF$ extends uniquely (up to a unique isomorphism) to a DG quasi-functor
$\underrightarrow{\cF}: \underrightarrow{A[Sm]}\to \cC$ (resp. $\underrightarrow{\cF}: \underrightarrow{A[\overline{Sm}]}\to \cC$)
such that $\Ho(\underrightarrow \cF)$ commutes with arbitrary direct sums. Let $\cT^{h, \Delta} (A[Sm], \cC)$ (resp. $\cT^{h, \Delta, \cN} (A[\overline{Sm}], \cC)$ ) be a full subcategory of $\cT (A[Sm], \cC)$ (resp. $\cT(A[\overline {Sm}] , \cC)$) whose objects are quasi-functors $\cF$ such that $\Ho(\underrightarrow \cF)$ takes complexes of type $(h)$, $(\Delta)$ (resp. $(h)$, $(\Delta)$, $(N)$)  to $0$ in $\Ho(\cC)$. The following result is the basic tool for constructing various
realization functors.
\begin{Th}\label{abrealization}
 The functor $\overline \Phi$ induces an equivalence of triangulated  categories
$$\cT^c(D\cM^{\eff}_{\et} (k; A), \cC) \iso \cT^{h, \Delta} (A[Sm], \cC)\iso \cT^{h, \Delta, \cN} (A[\overline{Sm}], \cC)$$
where $\cT^c(D\cM^{\eff}_{\et} (k; A), \cC)$ is the full subcategory of $\cT(D\cM^{\eff}_{\et} (k; A), \cC)$ formed by quasi-functors $\cF$ such that $\Ho(\cF)$ commutes with arbitrary direct sums.
 \end{Th}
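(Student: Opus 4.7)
The plan is to chain together the universal properties recalled in \S\ref{dgcat} with Proposition \ref{htopologyandtransfers} and Lemma \ref{hironaka}; both equivalences reduce, after such a translation, to formal manipulations. For the first equivalence I begin by rewriting the source: Proposition \ref{htopologyandtransfers}(c) gives a homotopy equivalence
$$\overline\Phi:\underrightarrow{A[Sm]}/I^{h,\Delta}\iso D\cM^{\eff}_{\et}(k;A),$$
so $\cT^c(D\cM^{\eff}_{\et}(k;A),\cC)\simeq\cT^c(\underrightarrow{A[Sm]}/I^{h,\Delta},\cC)$. Next I apply the universal property (\ref{dgquotient}) of the DG quotient, which identifies the right-hand side with the full subcategory of $\cT^c(\underrightarrow{A[Sm]},\cC)$ consisting of those quasi-functors $\cF$ for which $\Ho(\cF)$ vanishes on $\Ho(I^{h,\Delta})$ (the condition of preserving direct sums descends through the quotient since the projection is essentially surjective and cocontinuous). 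Finally, the universal property (\ref{dgindcompl}) of the ind-completion yields $\cT^c(\underrightarrow{A[Sm]},\cC)\iso\cT(A[Sm],\cC)$, and under this identification the vanishing condition picks out exactly $\cT^{h,\Delta}(A[Sm],\cC)$.

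For the second equivalence I would run the same three-step chain one more time, now with Lemma \ref{hironaka} as the geometric input. That lemma provides a homotopy equivalence $\underrightarrow{A[\overline{Sm}]}/\cN\iso \underrightarrow{A[Sm]}$. The universal property (\ref{dgquotient}) then identifies $\cT^c(\underrightarrow{A[Sm]},\cC)$ with the full subcategory of $\cT^c(\underrightarrow{A[\overline{Sm}]},\cC)$ on which $\Ho(\cdot)$ kills $\cN$, while (\ref{dgindcompl}) further identifies this with the subcategory of $\cT(A[\overline{Sm}],\cC)$ consisting of quasi-functors whose ind-extension kills $\cN$. Transporting the $I^{h,\Delta}$-vanishing condition through these identifications yields precisely $\cT^{h,\Delta,\cN}(A[\overline{Sm}],\cC)$.

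The only delicate point is the interpretation of the conditions $(h)$ and $(\Delta)$ on $A[\overline{Sm}]$: complexes of these types are originally defined in terms of smooth schemes (without specified compactifications), so to view them in $\underrightarrow{A[\overline{Sm}]}$ one must lift each $X$, $U_i$, $X\times\bA^1$ to a triple by choosing a compactification, which is possible by resolution of singularities. Any two such lifts are connected by a chain of morphisms of type $(N)$, so once the vanishing of $\cN$ is imposed, the resulting vanishing condition is intrinsic and coincides, under the homotopy equivalence of Lemma \ref{hironaka}, with the vanishing of $\Ho(\underrightarrow{\cF})$ on the corresponding complexes in $\underrightarrow{A[Sm]}$. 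This bookkeeping step is the main obstacle; with it in hand, the theorem is a formal consequence of the universal properties (\ref{dgindcompl}) and (\ref{dgquotient}) together with the two homotopy equivalences supplied by \S\ref{h.t} and \S\ref{comp}.
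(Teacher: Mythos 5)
Your proof is correct and follows the same approach as the paper's, which dispatches the theorem in one line as ``an immediate corollary of part c) of Proposition \ref{htopologyandtransfers}''; you simply spell out the chain of universal properties (\ref{dgindcompl}), (\ref{dgquotient}) together with Proposition \ref{htopologyandtransfers}(c) and Lemma \ref{hironaka} that the paper leaves implicit. Your final paragraph on lifting the types $(h)$ and $(\Delta)$ through compactifications, and observing that vanishing on $\cN$ makes the choice immaterial, correctly identifies the only point that requires care and resolves it in the way the paper tacitly intends.
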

\begin{proof} This is an immediate corollary of part c) of Proposition \ref{htopologyandtransfers}.  \end{proof}

\subsection{Betti realization.}\label{b.r.} $k=\mathbb{C}$.  Let $D_{dg}(Mod(A))$ be  the derived DG
category of modules over a commutative ring $A$ {\it i.e.},  the DG quotient of the category $C(Mod(A))$ of unbounded complexes of $A$-modules by
the subcategory of acyclic ones. 
We apply Theorem \ref{abrealization} to the quasi-functor
         $ C^{sing}_{A}:  A[Sm] \to C(Mod(A)) \to D_{dg}(Mod(A))$
that takes a smooth variety $X$ to its singular chain complex
$C^{sing}(X(\mathbb{C}))\otimes A $.  Since singular homology 
are homotopy invariant and satisfy the descent property for the
$h$-topology   $C^{sing}_{A}$ yields a DG quasi-functor
$$ R_{A}^{Betti}:   D\cM^{\eff}_{\et} (\mathbb{C}; A) \to D_{dg}(Mod(A)) . $$

\subsection{Hodge realization.}\label{h.r.}  Let $\bZ \subset A\subset \bQ$ be a subring,  and let $MHS^A$ be the category of {\it polarizable}
 mixed $A$-Hodge structures (\cite{d3}).
 We shall say that  a Hodge structure $(V_A, W_{\cdot} \subset V_\bQ, F^{\cdot}\subset V_{\mathbb{C}})\in MHS^{A}$  
is {\it effective} if
 \begin{equation}\label{maineffective}
 F^1=0 .
\end{equation}
Note that the
condition (\ref{maineffective}) implies that 
$$W_0=V_\bQ.$$
Denote by $MHS^{A}_{\eff} \subset MHS^{A}$ the full subcategory of effective Hodge structures.
We shall construct
a DG quasi-functor,
\begin{equation}\label{hodgemain}
 R^{Hodge}_A:  D\cM^{\eff}_{\gm, \et} (\mathbb{C};  A)\to  D_{dg}^b(MHS^{A}_{\eff}), 
 \end{equation}
together with an isomorphism of quasi-functors
$$ F \circ R^{Hodge}_A \simeq R_{A}^{Betti}: D\cM^{\eff}_{\gm, \et} (\mathbb{C};  A) \to D_{dg}(Mod(A)).$$
Here $F: D_{dg}^b(MHS^{A}_{\eff}) \to D_{dg}(Mod(A))$ is the forgetful functor.

Beilinson associated with every variety $X$ over $\bC$ an element of the derived category $D^b(MHS^A)$ whose cohomology are Deligne's mixed Hodge structures on (co)homology groups of $X$.
In fact,  his construction gives a DG quasi-functor $  A[\overline{Sm}] \to D_{dg}^b(MHS^{A})$. Let us explain this.
 In  (\cite{bei}, \S 3.9-4.1) Beilinson introduced  auxiliary triangulated categories $K^{b}_{\tilde {\cH}_{p}}$, $D^{b}_{\tilde {\cH}_{ p}}$ of $\tilde p$-Hodge complexes.  We define DG structures on these categories: consider a DG category  $C^{b}_{\tilde {\cH}_{p}}$
 whose objects are $\tilde p$-Hodge complexes and the group  $Hom((\tilde \cF^{\cdot}, d_{\cF}),  (\tilde \cG^{\cdot}, d_{\cG})[n] )$ of degree $n$ morphisms between $\tilde p$-Hodge complexes
 $$(\tilde \cF^{\cdot}, d_{\cF}) = \tilde \cF_A \rar{\alpha_\cF} \tilde\cF'_{\bQ}\lar{\beta_\cF}(\tilde\cF_{\bQ}, \tilde W_{\bQ})\rar{\gamma_\cF} (\tilde\cF'_{\bQ}, \tilde W'_{\bQ})\lar{\delta_\cF} (\tilde\cF_{\bQ}, \tilde W_{\bQ}, \tilde F^{\cdot}),$$
 $$(\tilde \cG^{\cdot}, d_{\cG}) = \tilde \cG_A \rar{\alpha_\cG} \tilde\cG'_{\bQ}\lar{\beta_\cG}(\tilde\cG_{\bQ}, \tilde W_{\bQ})\rar{\gamma_\cG} (\tilde\cG'_{\bQ}, \tilde W'_{\bQ})\lar{\delta_\cG} (\tilde\cG_{\bQ}, \tilde W_{\bQ}, \tilde F^{\cdot})$$
is a subgroup of $\prod Hom(\tilde \cF^{\cdot}, \tilde \cG^{\cdot +n})$
formed by all $h\in \prod Hom(\tilde \cF^{\cdot}, \tilde \cG^{\cdot +n})$ such that \\
a)  $h$ commutes with   $\alpha, \beta,\gamma$ and $ \delta$ (but not necessarily with the differential) \\
b) $h$ preserves the Hodge filtration \\
c) $h$ shifts the weight filtration: $h(\tilde W_{\cdot}) \subset  \tilde W_{\cdot - n}$ \\
d) $d_{\cG}\circ h - (-1)^n h\circ d_{\cF}$ takes  $\tilde W_{\cdot}$ to  $ \tilde W_{\cdot - n -1}$.\\
The formula $d(h):= d_{\cG}\circ h - (-1)^n h\circ d_{\cF}$ defines a differential on $Hom((\tilde \cF^{\cdot}, d_{\cF}),  (\tilde \cG^{\cdot}, d_{\cG})[\cdot ] )$.  
By definition, Beilinson's category $K^{b}_{\tilde {\cH}_{p}}$ is the homotopy category of $C^{b}_{\tilde {\cH}_{p}}$.
We have  a DG functor
\begin{equation}\label{eq2}
C^b( MHS^{A}) \to C^{b}_{\tilde {\cH}_{p}}
\end{equation}
that takes a complex $V^{\cdot}=(V_A, W_{\cdot} \subset V_\bQ, F^{\cdot}\subset V_{\mathbb{C}})$  of Hodge structures to the $\tilde p$-Hodge complex  $\tilde \cF^{\cdot}$ with $\tilde \cF_A=V_A$, $ \tilde\cF'_{\bQ}= \tilde\cF_{\bQ} =V_\bQ$,  $ \tilde\cF'_{\bC}= \tilde\cF_{\bC} =V_\bC$, $\tilde F^{\cdot}=F^{\cdot}$,
and $\tilde W_{ i} (\tilde \cF^{j}_\bQ) = W_{i+j}(V^j_\bQ)$.
  
Denote by $D^{b}_{dg \,  \tilde {\cH}_{p}}$ the DG quotient of  $C^{b}_{\tilde {\cH}_{p}}$ by the subcategory acyclic complexes.\footnote{ Warning: a $\tilde p$-Hodge complex $(\tilde \cF^{\cdot}, d_{\cF})$ is called acyclic if $\tilde \cF_A$ is acyclic.  
The subcomplexes  $\tilde W_{\bQ}$ need not be acyclic.}
Then the functor (\ref{eq2})
descends to a homotopy equivalence  (\cite{bei}, Lemmas 2.1 and 3.11): 
$$D^b_{dg}( MHS^{A}) \iso D^{b}_{dg\, \tilde {\cH}_{p}}.$$
Next, with every $(X, \overline X, j)\in  A[\overline {Sm}] $ Beilinson associated a canonical $\tilde p$-Hodge complex ${\cD}[X, \overline X]$. Set $\tilde R^{Hodge}_A((X, \overline
X, j))= \underline {{\cH}om} ({\cD}[X, \overline X],
A)$. We obtain DG quasi-functors:
$$
\tilde R^{Hodge}_A:  A[\overline {Sm}] \to C^{b}_{\tilde {\cH}_{p}}, 
$$
$$\tilde{{\underrightarrow R}}^{Hodge}_A:   \underrightarrow{A[\overline {Sm}]} \to  \underrightarrow{D^{b}_{dg \,  \tilde {\cH}_{p}}} \iso  \underrightarrow{D^b_{dg}( MHS^{A})} \rar{\cP}  D_{dg}(Ind(MHS^{A})),$$
where $Ind(MHS^{A})$ denotes the ind-completion of $MHS^{A}$ and $\cP$ is induced (via (\ref{dgindcompl}) ) by the embedding $ MHS^{A}\to Ind(MHS^{A})$. By construction, the composition of $\tilde{{\underrightarrow R}}^{Hodge}_A$ with the
forgetful functor    $\underrightarrow{F}:  D_{dg}(Ind(MHS^{A})) \to D_{dg}(Mod(A))$ is isomorphic to $\underrightarrow{A[\overline {Sm}]}\to D\cM^{\eff}_{\et} (\mathbb{C}; A) \rar{ R_{A}^{Betti}} D_{dg}(Mod(A))$. Now $\Ho(\tilde{{\underrightarrow R}}^{Hodge}_A)$ carries 
 complexes of type $(h)$, $(\Delta)$, $(N)$ to $0$, because $Ho(\underrightarrow{F})$ is conservative. Theorem \ref{abrealization} gives 
 $${\underrightarrow R}^{Hodge}_A:   D\cM^{\eff}_{\et} (\mathbb{C}; A)\to D_{dg}(Ind(MHS^{A})).$$
The functor $\Ho({\underrightarrow R}^{Hodge}_A)$ takes every geometric motive to the essential image of the fully faithful embedding $ D^b(MHS^{A})\mono D(Ind(MHS^{A}))$. This yields 
$$R^{Hodge}_A:  D\cM^{\eff}_{\gm, \et} (\mathbb{C};  A)\to  D_{dg}^b(MHS^{A}).$$
We will show in Proposition \ref{Lalb} that the functor $D^b(MHS^{A}_{\eff}) \to D^b( MHS^{A})$ induced by the embedding $MHS^{A}_{\eff} \mono MHS^{A}$ is fully faithful. Since the mixed Hodge structures on the homology groups of any variety are effective,
it follows that the DG quasi-functor  $R^{Hodge}_A$ factors uniquely through $D_{dg}^b(MHS^{A}_{\eff})$. This gives  (\ref{hodgemain}).

If $k\subset \bC$,  we will write  $R^{Hodge}_A$ for the composition
\begin{equation}\label{hodgemaineff}
  D\cM^{\eff}_{\gm, \et} (k;  A) \rar{f^*} D\cM^{\eff}_{\gm, \et} (\mathbb{C};  A)\to  D_{dg}^b(MHS^{A}_{\eff}),
 \end{equation}
where $f^*$ is the base change functor from \S \ref{bcg}.

\subsection{ \'Etale realization.}\label{e.r.}   Fix an algebraic closure $k \subset
\overline k $.   Denote by  $Mod(G, A)$ the category of
discrete $A$-modules\footnote{Recall that a $G$-module $M$ is called discrete if the stabilizer of every element $m\in M $ is open in $G$.}  over the Galois group  $G:= Gal (\overline
k/k)$
 and by $D_{dg}(G,  A)$  its derived DG category.
Let $ C^{\Delta}: D_{dg}(PSh_{tr}^A(Sm_k)) \to D_{dg}(PSh_{tr}^A(Sm_k))$ be the $\bA^1$-homotopy localization endofunctor (\cite{bv}, \S 3.1.1).
Consider a DG quasi-functor
$$\tilde \cS_A:    D_{dg}(PSh_{tr}^A(Sm_k)) \to D_{dg}(G,  A)$$
that takes a complex  of presheaves $F$ to
$$\tilde \cS_A(F)= C^{\Delta}(F)(spec\, k):=  \underset {k \subset k' \subset \overline k} {\colim} C^{\Delta}(F)(spec\, k'),$$
where $k'$ runs through all finite extensions of $k$ in $\overline k$. According to the Suslin-Voevodsky theorem (\cite{sv}) for $X$ one has a canonical isomorphism of $G$-modules
  \begin{equation}\label{susvoetheorem}
  H_i(\tilde \cS_{ \mathbb{Z}/n}(\mathbb{Z}/n_{tr}[X])) \simeq H_i^{\et}(X\times spec\, \overline k, \mathbb{Z}/n).
  \end{equation}
  Here is a more precise statement.
\begin{pr}\label{etalerealization} We have 
 \begin{equation}\label{eq3}
\Ho(\tilde \cS_A) (\Ho(I^{et, \Delta }_{tr}))=0.
 \end{equation}
 Thus, $\tilde \cS_A $ induces a DG quasi-functor
 $$\cS_A:    D\cM^{\eff}_{\et} (k;  A)  \to D_{dg}(G,  A).$$
 The functor $ \cS_{ \mathbb{Z}/n}$  is a homotopy equivalence:
  \begin{equation}\label{proofofsv}
  \Ho(\cS_{ \mathbb{Z}/n}): DM^{\eff}_{et } (k; \mathbb{Z}/n)\iso D(G,  \mathbb{Z}/n).
  \end{equation}
 \end{pr}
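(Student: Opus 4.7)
The strategy is to verify parts (1)--(2) by inspecting the two generating classes of $I^{et,\Delta}_{tr}$, and to deduce (3) from the Suslin--Voevodsky theorem via a compact-generator argument. Part (2) will follow formally from (1) via the universal property (\ref{dgquotient}) of DG quotients.

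For part (1), the $(\Delta)$-type complexes $A_{tr}[X\times \bA^1]\to A_{tr}[X]$ are killed automatically, since $\tilde \cS_A$ factors through the $\bA^1$-localization $C^\Delta$. For an \'etale hypercovering $U_\cdot \to X$, unwinding the definition gives
\[
\tilde\cS_A(A_{tr}[U_\cdot])=\mathrm{Tot}\,\underset{k'/k}{\colim}\;A_{tr}[U_\cdot](\Delta_{k'}^\bullet),
\]
viewed as the totalization of a bicomplex in the hypercovering and simplicial directions. Since filtered colimits preserve quasi-isomorphisms, a spectral-sequence argument reduces the claim to proving that, for each $m$, the complex $\colim_{k'}\mathrm{cor}_{k'}(\Delta_{k'}^m,\,U_\cdot\to X)$ is acyclic. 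This is a descent statement for the presheaf with transfers $Y\mapsto \mathrm{cor}_{\overline k}(\Delta^m_{\overline k},Y)$ on $Sm_{\overline k}$, which holds because an \'etale hypercovering of a smooth $\overline k$-scheme admits local sections on geometric points and those sections are respected by the transfer structure (compare \cite{v1}, \cite{v2}).

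For part (3), both sides are compactly generated. On the motivic side the perfect generators are $\bZ/n_{tr}[X]$ for $X\in Sm_k$ (Corollary \ref{compgen}, Lemma \ref{perfectmotives}); on the Galois side they are $\bZ/n[G/H]$ for open $H\subset G$. A direct calculation with zero-dimensional schemes shows $\cS_{\bZ/n}(\bZ/n_{tr}[spec\, k^H])\simeq \bZ/n[G/H]$, so $\Ho(\cS_{\bZ/n})$ carries generators to generators. The key remaining point is fully faithfulness on generators: by the Suslin--Voevodsky theorem (\ref{susvoetheorem}) and Hochschild--Serre descent,
\[
\Hom_{DM^{\eff}_{\et}(k;\bZ/n)}(\bZ/n_{tr}[spec\, k'],\bZ/n_{tr}[X][m])\simeq H^m_{\et}(X_{k'},\bZ/n)\simeq \Hom_{D(G,\bZ/n)}(\bZ/n[G/G_{k'}],\cS_{\bZ/n}(X)[m]).
\]
Since $\Ho(\cS_{\bZ/n})$ commutes with direct sums and the generators are compact on both sides, this extends to full faithfulness on all objects; essential surjectivity is then immediate from the fact that a generating set lies in the essential image.

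The main obstacle is the (Et) part of step (1): the acyclicity of the Suslin complex of an \'etale hypercovering in the limit over finite extensions $k'/k$. Everything else is formal DG/triangulated manipulation combined with the cited Suslin--Voevodsky comparison.
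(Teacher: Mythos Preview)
There is a genuine gap in your handling of the (Et) case in part (1). The reduction ``for each fixed $m$, the complex $\colim_{k'}\mathrm{cor}_{k'}(\Delta^m_{k'},\,U_\cdot\to X)$ is acyclic'' is a valid sufficient condition, but it is \emph{false}. Take $k=\overline{k}$, let $p\colon U=\mathbb{G}_m\to X=\mathbb{G}_m$ be the squaring map, and set $m=1$. The correspondence $Z=V(x^2-tx-1)\subset\mathbb{A}^1\times X$ is finite surjective of degree $2$ over $\mathbb{A}^1$, and its pullback $Z\times_X U=V(u^4-tu^2-1)$ is irreducible over $\bC(t)$; hence $p_*$ of its class is $2[Z]$. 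Any irreducible $W\subset\mathbb{A}^1\times U$ whose pushforward involves $[Z]$ must be a component of $Z\times_X U$, so $[Z]$ is not in the image of $\mathrm{cor}(\mathbb{A}^1,U)\to\mathrm{cor}(\mathbb{A}^1,X)$, and the \v{C}ech complex is not exact on $\mathbb{A}^1$-sections (the same example works mod $2$). What is true, and what is actually needed, is only that the \emph{total} bicomplex is acyclic; this cannot be read off columnwise. The vague appeal to ``local sections on geometric points \ldots\ respected by the transfer structure'' only yields exactness of $A_{tr}[U_\cdot]\to A_{tr}[X]$ as a complex of \'etale sheaves, which does not imply exactness on sections over the positive-dimensional scheme $\Delta^m_{\overline{k}}$.

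The paper's proof bypasses this difficulty by a coefficient reduction. For $A=\bZ/n$ one invokes \cite{mvw}, Theorem~9.35, which already gives both (\ref{eq3}) and the bounded-above equivalence $DM^{\eff}_{-,\et}(k;\bZ/n)\simeq D^{-}(G,\bZ/n)$; restricting to perfect objects and then extending by compact generation yields (\ref{proofofsv}). For general $A$ one reduces to $A=\bZ$ and, for each torsion-free generator $M$ of $I^{et,\Delta}_{tr}$, uses the distinguished triangle
\[
\tilde\cS_\bZ(M)\otimes\bQ/\bZ[-1]\to\tilde\cS_\bZ(M)\to\tilde\cS_\bZ(M)\otimes\bQ\to\tilde\cS_\bZ(M)\otimes\bQ/\bZ,
\]
the rational term vanishing by \cite{mvw}~9.35 and the $\bQ/\bZ$ term being a filtered colimit of the acyclic $\bZ/n$ cases. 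Your argument for (3) also has a gap: verifying the map on $\Hom(\bZ/n_{tr}[\mathrm{spec}\,k'],-)$ only shows $\cS_{\bZ/n}$ is fully faithful on the subcategory generated by zero-dimensional motives; to conclude it is an equivalence on all of $DM^{\eff}_{\et}(k;\bZ/n)$ you would still need to know that this subcategory is everything, which is essentially the content of the theorem.
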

  \begin{proof} Assume, first,  $A= \mathbb{Z}/n $. In this case,  formula (\ref{eq3}) is proved in  (\cite{mvw}, Theorem 9.35 ). It is also shown there that $\Ho(\tilde \cS_{\mathbb{Z}/n})$ induces an equivalence of the bounded from above categories:
  $$  DM^{eff }_{-, et } (k; \mathbb{Z}/n)\iso D^-(G,  \mathbb{Z}/n).$$
  This together with (\ref{susvoetheorem}) implies
  $$\Ho(\cS_{ \mathbb{Z}/n}): DM^{\eff}_{et } (k; \mathbb{Z}/n)^{perf}\iso D(G,  \mathbb{Z}/n)^{perf}.$$
 Since $\Ho(\cS_{ \mathbb{Z}/n})$ commutes with arbitrary direct sums the equivalence (\ref{proofofsv}) follows from Corollary \ref{compgen}. 

Let us prove the first claim of the proposition.  It suffices to check that $\Ho(\tilde \cS_{\bZ}) (M)=0$, for every complex  $M\in \Ho(I^{et, \Delta }_{tr})$   of type  $(\Delta)$ and of type $(Et)$ (with $A=\bZ$). In either case $M$ is a complex of torsion-free presheaves.    
 Consider
 the distinguished triangle
    $$ \Ho(\tilde \cS_{\bZ})(M)\otimes \mathbb{Q}/\mathbb{Z}[-1] \to  \Ho(\tilde \cS_{\bZ}) (M) \to \Ho(\tilde \cS_{\bZ}) (M)\otimes \mathbb{Q} \to  \Ho(\tilde \cS_{\bZ}) (M)\otimes \mathbb{Q}/\mathbb{Z}. $$
  The complex  $\Ho(\tilde \cS_{\bZ}) (M)\otimes \mathbb{Q}$ is acyclic by  ( \cite{mvw} , Theorem 9.35 ). The complex  $\Ho(\tilde \cS_{\bZ}) (M)\otimes \mathbb{Q}/\mathbb{Z}$
  is a colimit of acyclic complexes  $\Ho(\tilde \cS_{\bZ})(M)\otimes  \frac{1}{n}\mathbb{Z}/\mathbb{Z}$ over a filtrant  category. This implies that $  \Ho(\tilde \cS_{\bZ}) (M)$
  is acyclic.
\end{proof}
Let $l$ be a prime number.  Denote by $Mod(G, \mathbb{Z}/l^{\cdot})
$ the abelian category whose objects are inverse systems $\cdots \to
N_i \rar{\phi_{i-1}} N_{i-1} \to \cdots  \rar{\phi_1} N_1$,  where $N_i\in Mod(G, \mathbb{Z}/l^{i})$ and $\phi_i$ are
morphisms of  $G$-modules.  A morphism $N_{\cdot} \to N'_{\cdot}$
is a compatible system of homomorphisms $N_i \to N'_i$. Denote and $
D_{dg}(G, \mathbb{Z}/l^{\cdot} )$ the derived DG category of $Mod(G,
\mathbb{Z}/l^{\cdot}) $.  Let $Mod(G, \mathbb{Z} )\to Mod(G,
\mathbb{Z}/l^{\cdot}) $ be the functor that takes $ N\in  Mod(G,
\mathbb{Z}_l )$ to the inverse system $N_{\cdot}: = N\otimes
\mathbb{Z}/l^{\cdot}$, and let
$$ \stackrel{L}{\otimes}   \mathbb{Z}/l^{\cdot}:  D_{dg}(G, \mathbb{Z} ) \to  D_{dg}(G, \mathbb{Z}/l^{\cdot} )$$
be its left derived DG quasi-functor (\cite{dri}, \S 5).  Consider the composition of quasi-functors
\begin{equation}\label{ladicrealization}
R_{\mathbb{Z}/l^{\cdot}}^{\et}: D\cM^{\eff}_{\et} (k;  \mathbb{Z})  \rar{\cS_{\bZ } } D_{dg}(G, \mathbb{Z} ) \rar{\stackrel{L}{\otimes}   \mathbb{Z}/l^{\cdot}}
D(G,  \mathbb{Z}/l^{\cdot}).
\end{equation}
\begin{pr} There is a morphism of DG quasi-functors
  \begin{equation}\label{dtisom}
   \cS_{\mathbb{Z}} \to R_{\mathbb{Z}}^{Betti}:   D\cM^{\eff}_{\et} (\bC;  \mathbb{Z}) \to  D_{dg}(Mod(\bZ))
   \end{equation}
  such that for every integer $n$ the induced morphism
  $$ \cS_{\mathbb{Z}}   \stackrel{L}{\otimes}   \mathbb{Z}/n \to R_{\mathbb{Z}}^{Betti}  \stackrel{L}{\otimes}   \mathbb{Z}/n    $$
  is an isomorphism. In particular,
  \begin{equation}\label{ladiccomp}
  R_{\mathbb{Z}/l^{\cdot}}^{\et}  \simeq R_{\mathbb{Z}}^{Betti}  \stackrel{L}{\otimes}   \mathbb{Z}/l^{\cdot}.
  \end{equation}
\end{pr}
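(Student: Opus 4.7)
The plan is to produce the natural transformation at the level of the generating DG category $\bZ[Sm_\bC]$ and invoke Theorem \ref{abrealization}. Since $k=\bC$ the absolute Galois group is trivial, so $\cS_\bZ$ takes values in $D_{dg}(Mod(\bZ))$. Under the equivalence of Theorem \ref{abrealization}, $\cS_\bZ$ corresponds to the Suslin-complex functor $X\mapsto C_\bullet^{Sus}(X):=C^\Delta(\bZ_{tr}[X])(\Spec\bC)$ and $R^{Betti}_\bZ$ to the singular-chain functor $X\mapsto C^{sing}(X(\bC))\otimes\bZ$; both are objects of $\cT^{h,\Delta}(\bZ[Sm_\bC],D_{dg}(Mod(\bZ)))$. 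It thus suffices to produce a morphism between them at this level.

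First I would construct the natural transformation $\theta: C_\bullet^{Sus}(-)\to C^{sing}_\bullet(-(\bC))\otimes\bZ$ via the classical topological realization of a finite correspondence (Suslin-Voevodsky): an integral closed subscheme $Z\subset\Delta^n_\bC\times X$ finite surjective over $\Delta^n_\bC$ of degree $d$ defines, by restriction to the standard topological simplex $\Delta^n_{top}\subset\Delta^n(\bR)\subset\Delta^n(\bC)$, a continuous map $\Delta^n_{top}\to\Sym^d X(\bC)$; combined with the Dold-Thom identification and a suitable subdivision of $\Delta^n_{top}$ this yields a $\bZ$-linear combination of singular $n$-simplices on $X(\bC)$, compatibly with faces, degeneracies, and composition with finite correspondences. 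Both source and target of $\theta$ are $\bA^1$-homotopy invariant (trivially for singular chains, by construction for the Suslin complex) and kill complexes of type $(h)$: for the Suslin complex this is part (b) of Proposition \ref{htopologyandtransfers}, for singular chains it is standard proper descent \`a la Deligne. Hence $\theta$ lies in $\cT^{h,\Delta}$ and lifts uniquely, via Theorem \ref{abrealization}, to the desired morphism $\cS_\bZ\to R^{Betti}_\bZ$ of DG quasi-functors.

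Next I would verify that $\cS_\bZ\otimes^L\bZ/n\to R^{Betti}_\bZ\otimes^L\bZ/n$ is an isomorphism. Since both sides commute with $\cU$-small direct sums and $D\cM^\eff_\et(\bC;\bZ)$ is compactly generated by the motives $\bZ_{tr}[X]$ with $X\in Sm_\bC$ (Corollary \ref{compgen} and Lemma \ref{perfectmotives}), it is enough to check this on each $\bZ_{tr}[X]$. The Suslin-Voevodsky theorem (\ref{susvoetheorem}) identifies $H_i(\cS_{\bZ/n}(\bZ/n_{tr}[X]))$ with $H_i^\et(X,\bZ/n)$, which Artin's comparison theorem identifies with $H_i^{sing}(X(\bC),\bZ/n)$; one checks that $\theta_X\otimes\bZ/n$ induces precisely this composite isomorphism, which is essentially the content of the Suslin-Voevodsky rigidity construction. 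Formula (\ref{ladiccomp}) then follows by applying $\otimes^L\bZ/\ell^{\cdot}$ to (\ref{dtisom}) and passing to the inverse system $n=\ell^{\cdot}$.

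The main obstacle is the chain-level identification in the preceding paragraph: one must verify that the topological realization map $\theta_X\otimes\bZ/n$ induces the Suslin-Voevodsky rigidity isomorphism on homology. This is classical but requires careful tracking of the ``trace of sheets of a branched cover'' construction against the Suslin-Voevodsky rigidity argument for \'etale cohomology of varieties over $\bC$; the rest of the argument is formal, given the tools assembled in \S\ref{r.v.m}.
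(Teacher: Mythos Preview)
Your overall strategy---reduce via Theorem \ref{abrealization} to the generating category $\bZ[Sm_\bC]$ and then compare functors there---is exactly what the paper does. The gap is in the construction of the comparison map itself. You attempt to build a \emph{direct} chain-level transformation $C^{Sus}_\bullet(X)\to C^{sing}_\bullet(X(\bC))$ by sending an effective cycle $Z\subset\Delta^n_\bC\times X$ (finite of degree $d$ over $\Delta^n_\bC$) to the induced map $\Delta^n_{top}\to\Sym^d X(\bC)$ and then, ``by Dold--Thom and subdivision'', to an honest integral singular chain. This last step does not work: the quotient $X(\bC)^d\to\Sym^d X(\bC)$ is a branched cover, and a continuous map $\Delta^n_{top}\to\Sym^d X(\bC)$ generically meets the branch locus with nontrivial monodromy, so no subdivision produces a well-defined, functorial lift to a sum of maps $\Delta^n_{top}\to X(\bC)$. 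The Dold--Thom theorem gives a natural quasi-isomorphism $C^{sing}(X(\bC))\to C^{DT}(X)$ in one direction only; there is no natural chain-level inverse.

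The paper circumvents this by keeping the Dold--Thom complex as an intermediary and using a zigzag
\[
C^{sing}_\bZ \;\longrightarrow\; C^{DT} \;\longleftarrow\; \tilde\cS_\bZ,
\]
where $C^{DT}(X)=\Hom_{top}(\Delta^\bullet_{top},\amalg_d S^d X(\bC))^+$. Both arrows are honest DG functors on $\bZ[Sm]$; the left one is a quasi-isomorphism by Dold--Thom and hence invertible in $\cT(\bZ[Sm],D_{dg}(Mod(\bZ)))$, yielding (\ref{dtisom}). The mod-$n$ statement is then precisely Suslin--Voevodsky \cite[Theorem~8.3]{sv}, which asserts that the right arrow becomes a quasi-isomorphism after $\otimes^L\bZ/n$; no separate compatibility check against Artin's comparison is needed. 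Your ``main obstacle'' (matching $\theta\otimes\bZ/n$ with the rigidity isomorphism) disappears once the map is set up this way.
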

\begin{proof} For the first statement, according to Theorem \ref{abrealization} it suffices to construct a morphism between DG quasi-functors
$$  \tilde \cS_{\mathbb{Z}} \to C^{sing}_\bZ  : \mathbb{Z}[Sm] \to   D_{dg}(Mod(\bZ)). $$
    Consider
  a third functor $ C^{DT}: \mathbb{Z}[Sm] \to  D_{dg}(Mod(\bZ))$ that takes
  a variety $X$ to its Dold-Thom complex $Hom_{top}(\Delta_{top}^{\cdot}, \amalg _d S^d X(\bC))^+$ (\cite{sv}, \S 1).
  We have canonical morphisms 
  \begin{equation}\label{doldthom}
  C^{sing}_\bZ  \to   C^{DT} \leftarrow  \tilde \cS_{\mathbb{Z}} 
  \end{equation}
  with the first arrow being  a quasi-isomorphism (\cite{dt}; \cite{sv},  Theorem 8.2). This yields (\ref{dtisom}).
  
  The second arrow in (\ref{doldthom}) induces a quasi-isomorphism  $  \cS_{\mathbb{Z}}   \stackrel{L}{\otimes}   \mathbb{Z}/n  \to  C^{DT} \stackrel{L}{\otimes}   \mathbb{Z}/n $ by a key result of Suslin and Voevodsky   (\cite{sv},  Theorem 8.3).
\end{proof}
Let  $\overline{Mod}(G, \mathbb{Z}_l) $ be the category of all
$\mathbb{Z}_l$-modules over $G$, and let
$$\lim:  Mod(G, \mathbb{Z}/l^{\cdot} ) \to   \overline{Mod}(G, \mathbb{Z}_l) $$
be the inverse limit functor.
\begin{cor}\label{comst} For every geometric motive $M\in DM^{\eff}_{\gm, \et} (\mathbb{C};  \mathbb{Z})$  and  integer $i$ one has a natural isomorphism
$$\lim H_i (R_{\mathbb{Z}/l^{\cdot}}^{\et}(M))\simeq H_i (R_{\mathbb{Z}}^{Betti}(M)) \otimes \mathbb{Z}_l.$$
\end{cor}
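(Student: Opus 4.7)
The plan is to combine the comparison isomorphism (\ref{ladiccomp}) with the universal coefficient formula applied to the Betti complex. By the preceding proposition, there is a canonical isomorphism
$$R_{\mathbb{Z}/l^{\cdot}}^{\et}(M) \simeq R_{\mathbb{Z}}^{Betti}(M) \stackrel{L}{\otimes} \mathbb{Z}/l^{\cdot}$$
in $D_{dg}(G, \mathbb{Z}/l^{\cdot})$. Writing $C := R_{\mathbb{Z}}^{Betti}(M)$, the question becomes whether $\lim_n H_i(C \stackrel{L}{\otimes} \mathbb{Z}/l^n) \simeq H_i(C) \otimes \mathbb{Z}_l$.

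First I would check that $C$ is a perfect complex of abelian groups. Since $M \in DM^{\eff}_{\gm, \et}(\mathbb{C}; \mathbb{Z})$ is geometric, it is strongly generated by motives $\mathbb{Z}_{tr}[X]$ of smooth varieties $X/\mathbb{C}$. The Betti realization sends such a motive (via the Dold--Thom/Suslin--Voevodsky comparison implicit in the construction of $R^{Betti}_{\mathbb{Z}}$) to a complex quasi-isomorphic to $C^{sing}(X(\mathbb{C}), \mathbb{Z})$, which has finitely generated integral homology concentrated in a bounded range (as $X(\mathbb{C})$ is a finite CW complex up to homotopy). Perfect complexes are closed under cones and retracts, so $C$ is perfect.

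Next I would apply the universal coefficient theorem for a perfect complex of abelian groups to obtain a natural (hence $G$-equivariant) short exact sequence
\begin{equation*}
0 \to H_i(C) \otimes \mathbb{Z}/l^n \to H_i(C \stackrel{L}{\otimes} \mathbb{Z}/l^n) \to \mathrm{Tor}_1^{\mathbb{Z}}(H_{i-1}(C), \mathbb{Z}/l^n) \to 0,
\end{equation*}
and verify that taking $\lim_n$ is exact via the Mittag-Leffler criterion: the left-hand system has surjective transition maps, while the right-hand system identifies with $H_{i-1}(C)[l^n]$, whose transition maps are given by multiplication by $l$. Since $H_{i-1}(C)$ is finitely generated, its $l$-power torsion subgroup is finite and annihilated by some $l^N$, so the transitions on the right are eventually nilpotent. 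Hence Mittag-Leffler holds for the right and left terms, therefore for the middle, and one gets exactness of the inverse limit.

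Finally I would compute the two outer inverse limits: $\lim_n H_i(C)/l^n = H_i(C) \otimes \mathbb{Z}_l$ since $H_i(C)$ is finitely generated, and $\lim_n H_{i-1}(C)[l^n] = 0$ by the nilpotency of the transitions on the finite $l$-primary torsion. The only potential obstacle is keeping the constructions $G$-equivariant throughout, but since the UCT extension and the Mittag-Leffler arguments are functorial in $C$, and the Galois action on $C$ is trivial when $k = \mathbb{C}$ (and more generally acts through the naturality of the constructions in \S\ref{e.r.}), equivariance is automatic. This yields the desired natural isomorphism.
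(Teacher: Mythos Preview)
Your proof is correct and follows essentially the same approach as the paper. Both arguments use the comparison $R^{\et}_{\mathbb{Z}/l^\cdot}\simeq R^{Betti}_{\mathbb{Z}}\stackrel{L}{\otimes}\mathbb{Z}/l^\cdot$ from the preceding proposition, observe that $C=R^{Betti}_{\mathbb{Z}}(M)$ is perfect, and then reduce to the algebraic fact that $\lim_n H_i(C\stackrel{L}{\otimes}\mathbb{Z}/l^n)\simeq H_i(C)\otimes\mathbb{Z}_l$; the paper simply cites Atiyah--Macdonald (exactness of $l$-adic completion on finitely generated modules) for this last step, whereas you spell it out via the universal coefficient sequence and Mittag--Leffler, which amounts to the same thing. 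One small remark: the Betti realization is \emph{defined} via singular chains (\S\ref{b.r.}), so no Dold--Thom/Suslin--Voevodsky comparison is needed to identify $R^{Betti}_{\mathbb{Z}}(\mathbb{Z}_{tr}[X])$ with $C^{sing}(X(\mathbb{C}))$.
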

Indeed, for every  geometric motive $M$, the complex
$R_{\mathbb{Z}}^{Betti}(M)$ is perfect, {\it i.e.} quasi-isomorphic
to a finite complex of finitely
 generated free abelian groups. Hence, by (\cite{am}, pp 107-109)
$$\lim H_i (R_{\mathbb{Z}}^{Betti}(M)  \stackrel{L}{\otimes}   \mathbb{Z}/l^{\cdot})\simeq H_i (R_{\mathbb{Z}}^{Betti}(M)) \otimes \mathbb{Z}_l.$$
\subsection{Relation to Huber's approach}\label{h.a.} Huber's Hodge realization functor (\cite{hu1}, Corollary 2.3.5) is the dual to the functor $Ho(R_{\mathbb{Q}}^{Hodge})$ defined above. This can be seen as follows. By Proposition \ref{htopologyandtransfers}
the functor
  \begin{equation}\label{ref.r.}
\Ho(\overline \Phi^*):  \Ho( \underrightarrow{\bQ[Sm]})\to DM^{\eff} (k;  \mathbb{Q})
 \end{equation}
induced by $\bQ[Sm]\to \bQ_{tr}[Sm]$ exhibits $DM^{\eff} (k;  \mathbb{Q})$ as a quotient of  $\Ho( \underrightarrow{\bQ[Sm]}) $.  Thus, giving an isomorphism between two triangulated realization functors on $DM^{\eff} (k;  \mathbb{Q})$ is equivalent to giving an isomorphism
between their compositions with (\ref{ref.r.}). The comparison result follows by inspection.

\begin{rem} To construct the Hodge realization functor,  Huber defined  in (\cite{hu2}) a functor  $\Ho(\overline \Phi_*):  DM^{\eff} (k;  \mathbb{Q})\to DSm_k$, where $DSm_k$ is a certain quotient of the category
$\Ho( \underrightarrow{\bQ[Sm]})$ such that   $\Ho(\overline \Phi_*) \circ \Ho(\overline \Phi^*)$ is the projection.  In the remark on p. 197 of {\it loc. cit.} she conjectured that $\Ho(\overline \Phi_*)$ is an equivalence of categories.  In fact, her conjecture follows from (\cite{sv}, Lemma 5.16).
Note that Huber does not use explicitly  Voevodsky's results on $h$-topology.  The integral theory ({\it i.e.},  $\Ho(R_{\mathbb{Z}}^{Hodge})$) is missing in  (\cite{hu1}, \cite{hu2}).  
\end{rem}
\subsection{1-motives.}\label{1.m.} All the results and constructions in this subsection are borrowed from \cite{bk}, \cite{d3}, \cite{o}.  Let $k$ be a field of characteristic $0$. A 1-motive over $k$ is a complex of group schemes
                        $$M=[\Lambda \rar{u} G],  $$
      where $\Lambda $ is a $k$-lattice ({\it i.e.}, a group scheme such that $\Lambda(\overline k)$ is a discrete $Gal(\overline k/k)$-module isomorphic to $\bZ^r$ as an abelian group  and the canonical morphism
       $\Lambda(\overline k) \times  spec\, \overline k \to \Lambda$ is an isomorphism)   and $G$ is a semi-abelian $k$-scheme.  Morphisms between 1-motives are given by commutative squares.
     The (additive) category of 1-motives is denoted by $\cM_1= \cM_1(k)$. The category $\cM_1\otimes \bQ$ is abelian.
     
     For an abelian group scheme $H$ over $k$ we denote by   $\underline H$ the corresponding \'etale sheaf  on $Sm_k$
 $$\underline{H}(X): = Hom(X, H).$$
If the neutral component $H^0$ is quasi-projective the sheaf
$\underline H $ has a unique structure of a sheaf
with transfers (\cite{bk}, Lemma 1.3.2). Let
     \begin{equation}\label{eqexst}
\tilde {Tot}: C^b(  \cM_1) \to  C^b(Sh^{\et}_{tr})
 \end{equation}
be the DG functor that takes a 1-motive $[\Lambda \rar{u} G] \in \cM_1 \subset  C^b(  \cM_1)$ to the complex 
$$\underline \Lambda \stackrel{u}{\longrightarrow} \underline G.$$
 As a complex,  $\underline \Lambda$ is placed in degree $0$ and $\underline G$ in degree $1$. We introduce a structure of an exact category on $\cM_1$:  a complex  $M^{\cdot}\in C^b(  \cM_1)$
is said to be acyclic if $\tilde {Tot}( M^{\cdot}) $ is acyclic.
The homotopy category of acyclic complexes is strongly generated by short exact sequences. Let $D^b_{dg}(\cM_1)$ be the derived DG category of
$\cM_1$.  
  We get  a DG quasi-functor
\begin{equation}\label{porep}
 D^b_{dg}(\cM_1) \to D^b_{dg} (Sh^{\et}_{tr}) \rar{(\ref{eq111})}  D\cM^{\eff}_{\et} (k;  \mathbb{Z}). 
\end{equation}
Let $d_{\leq 1}D\cM^{\eff}_{\gm, \et} (k;  \mathbb{Z}) $  be the homotopy idempotent completion of the full subcategory of $D\cM^{\eff}_{\gm, \et} (k;  \mathbb{Z})$ strongly generated by motives of smooth curves.  Then, according to (\cite{bk}, Theorem 
2.1.2) the functor  (\ref{porep}) 
induces a homotopy equivalence between  $D^b_{dg}(\cM_1)$ and $d_{\leq 1}D\cM^{\eff}_{\gm, \et} (k;  \mathbb{Z})$. In particular,  the functor  (\ref{porep})  factors through the subcategory of geometric motives:
$$Tot_\bZ: D^b_{dg}(\cM_1) \mono  D\cM^{\eff}_{\gm, \et} (k;  \mathbb{Z}).$$ 
For a subring $A\subset \bQ $ we set 
$$Tot_A:    D^b_{dg}(\cM_1\otimes A)\lar{} D^b_{dg}(\cM_1) \otimes A \rar{Tot}  D\cM^{\eff}_{\gm} (k;  A).$$
The left arrow is a homotopy equivalence (\cite{bk}, Corollary 1.6.2). 
As in the integral case, $Tot_A$  is a homotopy equivalence  between $D^b(\cM_1\otimes A)$ and $d_{\leq 1}D\cM^{\eff}_{\gm, \et} (k;  A) \subset D\cM^{\eff}_{\gm, \et} (k;  A) $.  
For $A=\bQ$ this result was announced by Voevodsky and proved by  Orgogozo  \cite{o}.

According  to (\cite{bk}, \S 5)  the triangulated functor  $\Ho(Tot_\bQ) $ has a left adjoint functor;  thus by Lemma \ref{dgadj} so does the functor $Tot_\bQ$.  We denote the left adjoint DG quasi-functor by
$$LAlb_\bQ:   D\cM^{\eff}_{\gm}(k; \mathbb{Q}) \to   D^b_{dg}({\cM}_1 \otimes \mathbb{Q}).$$

In (\cite{d3}, \S 10.1) Deligne constructed an equivalence of categories 
\begin{equation}\label{deqe}
{\cM}_1(\cC) \iso MHS_1^\bZ.
\end{equation}
where $ MHS_1^\bZ $ is
the full subcategory of  the category $MHS^{\bZ}$ of mixed polarizable Hodge structures 
that consists of torsion-free objects of type $\{(0,0),(0,-1),(-1, 0),(-1,-1)\}$. As a subcategory of an abelian category  $ MHS_1^\bZ $ inherits an exact structure.
It follows easily from (\cite{bk}, Proposition 1.4.1) that (\ref{deqe}) is, in fact, an equivalence of exact categories.  For  $k\subset \mathbb{C}$,  $A\subset \bQ$, we set
$$
 T^{Hodge}_\bZ:  D^b_{dg}({\cM}_1(k))
\to D^b_{dg}({\cM}_1(\cC))\to D^b(MHS_1^\bZ), $$
$$ T^{Hodge}_A:  D^b_{dg}({\cM}_1(k)\otimes A)
\rar{ T^{Hodge}_\bZ \otimes A} D^b(MHS_1^A).
$$
The l-adic realization of 1-motives (\cite{d3}, 10.1) will be denoted by
$$
T_{\mathbb{Z}/l^{\cdot}}^{\et}: D^b_{dg}({\cM}_1(k)) \longrightarrow D^b_{dg}(G, \mathbb{Z}/l^{\cdot} ).
$$

 \section{Proofs}\label{proofs}
 \subsection{Functor $\overline{LAlb_\bQ}$.}\label{lalb}
Fix a subring $\bZ \subset
 A\subset \bQ$. Recall that a mixed  polarizable Hodge structure $(V_A, W_{\cdot} \subset V_\bQ, F^{\cdot}\subset V_{\mathbb{C}}) \in MHS^A $ is called 
 {\it effective} if $F^1=0$.
 We will write
$MHS_{\eff}^A \subset MHS^A $ for the full subcategory
of effective Hodge structures.
Let 
$$\overline {Tot}_A:   D^b_{dg}(MHS_1^A) \to  D^b_{dg}(MHS_{\eff}^A)$$
be the functor induced by the embedding $ MHS_1^A \subset MHS_{\eff}^A$.
The following result was proven in (\cite{bk}, Propositions 15.3.4 and 17.1.1).  Our proof is a variant of the argument given there.
\begin{pr}\label{Lalb}
The functor $\Ho(\overline {Tot}_A)$ is fully faithful.  The functor $\overline {Tot}_\bQ $ has a left
adjoint functor
$$\overline{LAlb}_\bQ:  D^b_{dg}(MHS_{\eff}^\bQ)\to D^b_{dg}(MHS_1^\bQ),$$
 which is {\it t}-exact {\it i.e.}, for every $M\in MHS_{\eff}^\bQ $, $\Ho(\overline {LAlb}_\bQ)(M)$ is isomorphic to an object of
 $MHS_1^\bQ \subset D^b(MHS_1^\bQ).$
\end{pr}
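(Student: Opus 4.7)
The plan is to derive both assertions from closure properties of $MHS_1^A$ inside $MHS_{\eff}^A$. For full faithfulness of $\Ho(\overline{Tot}_A)$, I would first check that $MHS_1^A$ is closed under extensions in the abelian category $MHS_{\eff}^A$: given a short exact sequence $0 \to M' \to E \to M'' \to 0$ with $M', M'' \in MHS_1^A$, the underlying $A$-module $E_A$ is torsion-free since $A \subset \bQ$ and both endpoints are, and each non-zero Hodge component $E^{p,q}$ is an extension of $M''{}^{p,q}$ by $M'{}^{p,q}$, hence the types of $E$ remain within $\{(0,0),(0,-1),(-1,0),(-1,-1)\}$. This yields the isomorphism $\Ext^1_{MHS_1^A}(M,N) \iso \Ext^1_{MHS_{\eff}^A}(M,N)$ for $M, N \in MHS_1^A$. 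Combined with vanishing of higher Ext's on both sides — Beilinson's theorem that $MHS^\bQ$ has cohomological dimension $1$ handles $A = \bQ$, and for $\bZ \subset A \subsetneq \bQ$ one reduces to the rational case by tracking torsion in the integral $\Ext^1$'s between lattices, abelian varieties, and tori, as in \cite{bk}, Proposition 15.3.4 — this produces full faithfulness on the derived level.

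For the construction of $\overline{LAlb}_\bQ$, given $V \in MHS_{\eff}^\bQ$ with weight filtration $\{W_j V\}$, I would define the left adjoint pointwise on the abelian level. Set $U := V/W_{-3}V$; since $W_{-2}U$ is pure of weight $-2$ and polarizable, it is semisimple and splits canonically as $(W_{-2}U)_{(-1,-1)} \oplus (W_{-2}U)_{\text{other}}$ into its $\bQ(1)$-isotypic component and the complementary sum of non-Tate simples. Let $K \subset V$ be the preimage of $(W_{-2}U)_{\text{other}}$ under $V \twoheadrightarrow U$ and put $\overline{LAlb}_\bQ(V) := V/K$. By construction this lies in $MHS_1^\bQ$, and the universal property is immediate: any morphism $V \to N$ with $N \in MHS_1^\bQ$ must kill both $W_{-3}V$ (the target has no weights $\le -3$) and the non-$(-1,-1)$ isotypic piece (no such components occur in $N$), so it factors uniquely through $V/K$.

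To see $t$-exactness, right exactness of $\overline{LAlb}_\bQ$ on abelian categories is formal from adjunction. For left exactness, given $0 \to V' \to V \to V'' \to 0$ in $MHS_{\eff}^\bQ$, I would verify the identity $V' \cap K = K'$ for the corresponding kernels of $V$ and $V'$: this follows from exactness of the weight filtration (giving $V' \cap W_{-3}V = W_{-3}V'$) together with compatibility of the isotypic decomposition with subobjects in the semisimple category of pure polarizable weight-$(-2)$ Hodge structures. Hence $\overline{LAlb}_\bQ$ is exact on hearts, extends to a $t$-exact triangulated functor on $D^b$, and lifts to a DG quasi-functor left adjoint to $\overline{Tot}_\bQ$ via Lemma \ref{dgadj}. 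The main technical obstacle is the integral higher-Ext vanishing needed for full faithfulness at non-rational $A$, which requires a careful torsion analysis.
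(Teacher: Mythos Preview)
Your proposal is correct and follows essentially the same route as the paper. Both arguments reduce full faithfulness to extension-closure of $MHS_1^A$ inside $MHS_{\eff}^A$ together with the fact that $MHS^A$ has homological dimension $1$; both construct $\overline{LAlb}_\bQ$ on the abelian level by first passing to $V/W_{-3}V$ and then killing the non--Hodge--Tate summand of the weight $-2$ piece; and both deduce $t$-exactness from exactness of this abelian functor.

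Two minor points where the paper is slightly more efficient. First, the paper invokes Beilinson's result (\cite{bei}, Corollary 1.10) on homological dimension $1$ uniformly for all $A\subset\bQ$, so the ``careful torsion analysis'' you flag as the main obstacle for non-rational $A$ is not actually needed as a separate step. Second, for exactness of $\overline{lalb}$ the paper simply notes that each of the two factors $w_{\geq -2}$ and $\delta$ is exact (the former because weight-graded pieces are exact, the latter because the isotypic splitting in the semisimple category of pure polarizable structures is functorial and exact), which is cleaner than verifying $V'\cap K=K'$ by hand.
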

\begin{proof}
  It is known that the category
$MHS^{A}$ of polarizable mixed Hodge structures has
homological dimension $1$ ({\it i.e.},  $Ext^i(M,N)=0$ for every $M,N\in
MHS^{A}$ and every $i>1$. See \cite{bei}, Corollary 1.10). 
The first claim of the
proposition follows
from this fact and the next lemma.
\begin{lm}\label{lm10}
Let ${\cB}$ be an abelian category of homological dimension $\leq
1$ and let ${\cA}\subset {\cB}$ be a full abelian
subcategory
closed under extensions. Then \\
a) homological dimension of ${\cA}$ is at most $1$\\
b) the functor $D^b({\cA})\to D^b({\cB})$ is fully faithful.
\end{lm}
\begin{proof}
a) It is enough to check that for every $M_1, M_2, M_3 \in {\cA}$
the Yoneda product
$$Ext^1_{\cA}(M_1, M_2)\times Ext_{\cA}^1(M_2, M_3) \to Ext_{\cA}^2(M_1, M_3)$$
is $0$. In turn, this is equivalent to showing that for every
extensions
\begin{equation}\label{ext1}
0\to M_2 \to N_1 \to M_1\to 0
\end{equation}
$$0\to M_3 \to N_2 \to M_2\to 0$$
there exists an object $P\in {\cA}$ with a 3 step filtration
$0\subset M_3 \subset N_2 \subset P$ such that the extension
$$0\to N_2/M_3= M_2 \to P/M_3 \to P/N_2 \to 0 $$ is isomorphic to
(\ref{ext1}). Since $Ext^2_{\cB}(M_1,M_3)=0$ there exists $P\in
{\cB}$ with these properties and since ${\cA} \subset {\cB}$ is closed under extensions $P\in {\cA}$.\\
b) It is enough to show that for every $M_1, M_2 \in {\cA}$ and
for every $i\geq 0$ the morphism
\begin{equation}\label{triv1}
Ext_{\cA}^i(M,N) \to Ext_{\cB}^i(M,N)
\end{equation}
is an isomorphism. This is true for $i=0$ because ${\cA}\subset
{\cB}$ is a full subcategory and for $i=1$ because ${\cA}$ is
closed under extensions. If $i > 1$ both groups in (\ref{triv1}) are
trivial by part a).

\end{proof}

 To prove the existence of $\overline {LAlb}_\bQ$  we first
show that the embedding of abelian categories
$$\overline
{tot}: MHS_1^{\mathbb Q} \subset MHS_{\eff}^{ \mathbb{Q}}$$ has a left
adjoint functor
$$\overline {lalb}:  MHS_{\eff}^{ \mathbb{Q}}\to MHS_1^{\mathbb Q}.$$
The functor $\overline {lalb}$ is going to be the composition
$$ MHS_{\eff}^{ \mathbb{Q}}\stackrel{w_{\geq -2}}{\longrightarrow}
 MHS_{\geq -2, eff}^{\mathbb{Q}}\stackrel{\delta}{\longrightarrow}  MHS_1^{\mathbb Q}
 \, , \,
\overline {lalb}: = \delta \circ w_{\geq -2},$$ where $ MHS_{\geq
-2, eff}^{ \mathbb{Q}}$ is a full subcategory of $MHS_{\eff}^{
\mathbb{Q}}$ consisting of mixed Hodge structures that have weights
greater or equal than $-2$, $w_{\geq -2}$ is the functor that takes
a Hodge structure $V$ to the quotient $V/W_{-3}V$. The second
functor $\delta$ is defined as follows. Given  $V\in MHS_{\geq -2,
eff}^{ \mathbb{Q}}$ there exists a unique decomposition of the pure Hodge structure $W_{-2}V $
into the direct sum of a Hodge-Tate substructure and a
substructure $P\subset W_{-2}V $ that has no Hodge-Tate
subquotients.  The existence follows from the semi-simplicity of the category of pure
polarizable Hodge structures.
Set $\delta (V)= V/i(P)$.
  We leave it to the reader to check that that $\delta$ is indeed a
  functor and that the projection $V\to \delta(V)$ extends to
  a morphism of functors $Id \to \delta $.  Composing it with the
  natural morphism $Id \to w_{\geq -2}$ we get
  $$\overline{\nu}: Id \to \overline {tot} \circ \overline {lalb}.$$
Together with the obvious isomorphism
  $$\overline{\mu}: \overline {lalb} \circ \overline {tot} \simeq Id $$
the triple $(\overline {lalb}, \overline{\nu}, \overline{\mu})$ is
an adjunction datum: the compositions
$$ \overline {lalb} \stackrel{\overline {lalb}\, \overline{\nu}}{\longrightarrow}
 \overline {lalb}\circ \overline {tot} \circ \overline {lalb}
 \stackrel{\overline {\mu} \, \overline {lalb}}{\longrightarrow}\overline {lalb}$$
$$ \overline {tot}\stackrel{\overline{\nu}\, \overline {lalb}}{\longrightarrow}
 \overline {tot}\circ \overline {lalb}\circ \overline {tot}
 \stackrel{\overline {lalb}\,\overline {\mu} }{\longrightarrow}\overline {tot}$$
are identity morphisms.
 Furthermore, the functors $w_{\geq -2}$, $\delta $ are exact and so is the composition $\overline{lalb}$.
 The functor $\overline
s$ is also exact. Thus, $(\overline {lalb}, \overline{\nu},
\overline{\mu})$ automatically extends to an adjunction datum
$(\overline {LAlb}_\bQ, \nu, \mu)$ for the derived DG categories. The
$t$-exactness of $\overline {LAlb}_\bQ$ is clear.

\end{proof}

 \subsection{The main theorem.}\label{mr}
  
\begin{Th}\label{onemotives}
  \begin{enumerate}[(a)]
\item{Let $k$ be a field of characteristic $0$ of finite \'etale homological
dimension, $l$  a prime number.  Then, we have 
\begin{equation}\label{etc}
T^{\et}_{\mathbb{Z}/l^{\cdot}} \simeq R_{\mathbb{Z}/l^{\cdot}} ^{\et} \circ Tot_\bZ.\footnote{The DG quasi-functors in this formula are viewed as objects of the triangulated category $\cT(*, *)$ (see \S \ref{dgcat}). This remark also refers to all other isomorphisms below.} 
\end{equation}}
\item{
 Let $k$ be a subfield of  $ \mathbb{C}$ and $A$ a subring of $\bQ$.  Then, we have
\begin{equation}\label{hodgec1}
\overline {Tot}_A \circ T^{Hodge}_A \simeq R^{Hodge}_A \circ Tot_A
\end{equation}
and the morphism of functors
\begin{equation}\label{hodgec2}
\overline{LAlb}_\bQ  \circ R^{Hodge}_\bQ \to T^{Hodge}_\bQ \circ LAlb_\bQ
\end{equation}
induced by $R^{Hodge}_\bQ \to\overline {Tot}_\bQ \circ T^{Hodge}_\bQ\circ LAlb_\bQ \simeq R^{Hodge}_\bQ \circ Tot_\bQ \circ LAlb_\bQ$ is an isomorphism.}
\end{enumerate}
\end{Th}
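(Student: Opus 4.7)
The plan is to handle parts (a) and (b) in parallel by reducing to statements on the heart $\cM_1(k)$ and then invoking Theorem \ref{bounded}, and finally to deduce (c) from (b) using the adjunction $(\overline{LAlb}_\bQ, \overline{Tot}_\bQ)$ together with the Lefschetz $(1,1)$ result of \cite{bk}.

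For (b), the crucial input is the isomorphism (\ref{eq104}), i.e.\ the restriction
\[
(\overline{Tot}_A\circ T^{Hodge}_A)_{\vert\cM_1(k)\otimes A}\;\simeq\;(R^{Hodge}_A\circ Tot_A)_{\vert\cM_1(k)\otimes A},
\]
already constructed in \S\S\ref{betticompute}--\ref{c.h.r.} from the truncated Eilenberg--MacLane cube resolution of $\underline G$ together with Beilinson's explicit Hodge complex of a compactified smooth variety. To promote this to an identification of DG quasi-functors on the full $D^b_{dg}(\cM_1(k)\otimes A)$, I would verify property (P) of Theorem \ref{bounded} for both composites. The left composite $\overline{Tot}_A\circ T^{Hodge}_A$ is a composition of exact functors between exact categories, so it agrees with its own right derived functor and (P) is automatic. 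For the right composite $R^{Hodge}_A\circ Tot_A$ the task is to show that for every $1$-motive $M=[\Lambda\to G]$ and every $i>0$, the sheaf $H^iR^{Hodge}_A(Tot_A(M))$ is effaceable through an admissible embedding $M\hookrightarrow M'$ of $1$-motives; this is handled by the same truncated Eilenberg--MacLane analysis, by enlarging $G$ until the positive-degree terms of the resolution are killed after Hodge realization. Once (P) holds on both sides, Theorem \ref{bounded} gives a canonical extension of (\ref{eq104}) to a DG-quasi-functorial isomorphism and thus proves (b). Part (a) is formally identical, the only change being that the base identification on $\cM_1(k)$ comes from Deligne's construction of $T^{\et}_{\bZ/\ell^\cdot}$ matched with the Suslin--Voevodsky isomorphism (\ref{susvoetheorem}).

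For (c), I would apply the fully faithful quasi-functor $\overline{Tot}_\bQ$ (Proposition \ref{Lalb}) to the arrow (\ref{hodgec2}) and use (b) to rewrite the target $\overline{Tot}_\bQ\circ T^{Hodge}_\bQ\circ LAlb_\bQ$ as $R^{Hodge}_\bQ\circ Tot_\bQ\circ LAlb_\bQ$. The resulting morphism is precisely (\ref{eqsecondresin}), induced by the adjunction unit $\mathrm{Id}\to Tot_\bQ\circ LAlb_\bQ$, and is proved to be an isomorphism in \cite{bk} via the Lefschetz $(1,1)$ theorem. Full faithfulness of $\overline{Tot}_\bQ$ then implies that (\ref{hodgec2}) itself is an isomorphism.

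The main obstacle is verifying property (P) for $R^{Hodge}_A\circ Tot_A$ on $\cM_1(k)$ with \emph{integral} coefficients: because the integral Eilenberg--MacLane complex of $\underline G$ fails to be a resolution and only its truncation is available, the positive-degree Hodge cohomology of $Tot_A(M)$ is not a priori zero and must instead be absorbed into maps between $1$-motives. Exhibiting such effacements explicitly, in a way compatible with the identification (\ref{eq104}), is exactly the technical content of the integrality sections \S\S\ref{betticompute}--\ref{c.h.r.} and is the step where the argument cannot simply mimic the rational case.
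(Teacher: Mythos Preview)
Your overall architecture (reduce to the heart, apply Theorem~\ref{bounded}, then handle (\ref{hodgec2}) via adjunction and Lefschetz) is right, and your treatment of (\ref{hodgec2}) matches the paper. But you misidentify where the work lies in (a) and (b), and you miss a logical dependence of (b) on (a).

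Part (a) is \emph{not} proved in parallel with (b) via Eilenberg--MacLane. It is a short direct computation: since $\underline\Lambda$ and $\underline G$ are $\bA^1$-homotopy invariant, $C^\Delta(Tot(M))\to Tot(M)$ is a quasi-isomorphism, so $R^{\et}_{\bZ/l^\cdot}\circ Tot(M)$ is computed by $(\Lambda(\overline k)\to G(\overline k))\overset{L}\otimes\bZ/l^\cdot$, which one checks by hand is concentrated in degree~$0$ and agrees with Deligne's $l$-adic realization. Theorem~\ref{bounded} then finishes (a).

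The key point you miss is that (a), combined with the \'etale/Betti comparison (Corollary~\ref{comst}), gives the vanishing
\[
H^i(R^{Hodge}_\bZ\circ Tot_\bZ(M))=0\qquad\text{for all }i\ne 0
\]
(this is (\ref{vanishing})). Thus property~(P) for $R^{Hodge}_A\circ Tot_A$ is \emph{trivial}: there is nothing to efface, in either positive or negative degree. Your ``main obstacle'' --- effacing the positive-degree Hodge cohomology by ``enlarging $G$'' --- is therefore a phantom, and the mechanism you sketch for it is in any case not a proof (it is unclear how embedding $M$ into a larger $1$-motive would kill a torsion class in $H^i$). The genuine hard work in (b) is entirely in constructing the $H^0$-isomorphism $\Theta_M$ and verifying its compatibility with the Hodge and weight filtrations (\S\S\ref{betticompute}--\ref{c.h.r.}), which you treat as a black box. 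You also omit the reduction to $k=\bC$ via base-change compatibility (Proposition~\ref{bchred}), which is needed before any of this begins.
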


The rest of this section is devoted to a proof of the theorem.

\subsection{Proof of (\ref{etc}).} We shall show that, for every 1-motive  
$M=[\Lambda \rar{u} G] $, the modules $H^i( R_{\mathbb{Z}/l^{\cdot}} ^{\et} \circ Tot(M)) \in Mod(G, \bZ/l^{\cdot})$ are $0$ for $i\ne 0$  and canonically isomorphic to Deligne's l-adic realization of $M$ for $i=0$. The formula (\ref{etc}) would follow from Theorem \ref{bounded}. 

Since presheaves $\underline \Lambda, \underline G$ are homotopy invariant the canonical morphism
$$C^\Delta(Tot (M)) \to Tot(M)$$ 
is a quasi-isomorphism. Thus
$$R_{\mathbb{Z}/l^{\cdot}}^{\et}\circ Tot(M)\iso (\Lambda(\overline k) \rar{u} G(\overline k)) \stackrel{L}{\otimes}   \mathbb{Z}/l^{\cdot}
\lar{\alpha}  (\Lambda(\overline k) \rar{u +v} G(\overline k)\oplus  \Lambda(\overline k)  \otimes \mathbb{Q} ) \stackrel{L}{\otimes}   \mathbb{Z}/l^{\cdot}$$
$$\iso (G(\overline k)\oplus  \Lambda(\overline k)  \otimes \mathbb{Q}/\bZ ) \stackrel{L}{\otimes}   \mathbb{Z}/l^{\cdot}[-1]\iso
ker(G(\overline k)\oplus  \Lambda(\overline k)  \otimes \mathbb{Q}/\bZ \rar{l^{\cdot}} G(\overline k)\oplus  \Lambda(\overline k)  \otimes \mathbb{Q}/\bZ ).$$
Here $v: \Lambda (\overline k) \to  \Lambda(\overline k) \otimes \mathbb{Q}$ is the canonical embedding, morphism $\alpha$ sending $\Lambda(\overline k)  \otimes \mathbb{Q}$ to zero is a quasi-isomorphism
because $\mathbb{Q}\stackrel{L}{\otimes}   \mathbb{Z}/l^{\cdot}\simeq 0$.
The module at the right-hand side of the formula is  Deligne's l-adic realization of $M$. This completes the proof of (\ref{etc}).

\subsection{Compatibility of $LAlb_\bQ$ with base change.} 
 When proving the remaining part of the theorem, we may assume that $k=\mathbb{C}$.
Indeed,  we have the following general result.
\begin{pr}\label{bchred} Let $k\subset k'$ be any extension,  $A=\bZ$ or $\bQ$, and let
$$
f^*:  D^b_{dg}({\cM}_1(k) \otimes A) \to
D^b_{dg}({\cM}_1(k' )\otimes A),
$$
\begin{equation}\label{pbf}
f^*: D\cM^{\eff}_{\et}(k; A) \to
 D\cM^{\eff}_{\et}(k';  A)
\end{equation}
be the corresponding pullback functors \S \ref{bcg}. Then
\begin{equation}\label{pbfc1}
f^*\circ Tot_A \simeq Tot_A \circ f^*: D^b_{dg}({\cM}_1(k) \otimes A) \to  D\cM^{\eff}_{\gm}(k'; A),
\end{equation}
\begin{equation}\label{pbfc2}
f^*\circ LAlb_\bQ \simeq LAlb_\bQ \circ f^*:  D\cM^{\eff}_{\gm}(k; \mathbb{Q})\to D^b_{dg}({\cM}_1(k' )\otimes \bQ) .
\end{equation}
\end{pr}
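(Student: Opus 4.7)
For (\ref{pbfc1}), we work at the level of presheaves with transfers. The functor $Tot_A$ is induced by the assignment sending a $1$-motive $[\Lambda \xrightarrow{u} G]$ to the two-term complex of representable sheaves $\underline\Lambda \xrightarrow{u} \underline G$, and the base change $f^*$ on motives is induced by the left Kan extension $f^{-1}$ on presheaves described in \S\ref{bcg}. Hence it suffices to produce a natural isomorphism $f^{-1}\underline H \iso \underline{H_{k'}}$ for every $k$-group scheme $H$ whose neutral component is quasi-projective, and apply it with $H = \Lambda$ and $H = G$. Using the co-cofinal subcategory $\cC' \subset \cC$ of \S\ref{bcg}, one has $(f^{-1}\underline H)(X) = \colim_{g : X \to Y} H(Y)$, the colimit ranging over $k$-morphisms from the smooth $k'$-scheme $X$ to smooth $k$-schemes $Y$. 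The natural map to $\underline{H_{k'}}(X) = \mathrm{Mor}_k(X, H)$ sends $(Y, g, h)$ to $h \circ g$; this is surjective because any $\phi \in \mathrm{Mor}_k(X, H)$ is realized by $(H, \phi, \mathrm{id}_H)$, and injective because any two representatives of $\phi$ are connected to $(H, \phi, \mathrm{id}_H)$ by morphisms in $\cC'$. The resulting isomorphism extends to the promised isomorphism of DG quasi-functors.

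For (\ref{pbfc2}), we combine (\ref{pbfc1}) with the adjunctions $LAlb_\bQ \dashv Tot_\bQ$ over $k$ and over $k'$ from \S\ref{1.m.}. The corresponding units $\eta$ and counits $\epsilon$ produce a canonical morphism of DG quasi-functors
\[
\lambda \colon LAlb_\bQ \circ f^* \longrightarrow f^* \circ LAlb_\bQ,
\]
given by the composite
\[
LAlb_\bQ \circ f^* \xrightarrow{\eta} LAlb_\bQ \circ f^* \circ Tot_\bQ \circ LAlb_\bQ \overset{(\ref{pbfc1})}{\simeq} LAlb_\bQ \circ Tot_\bQ \circ f^* \circ LAlb_\bQ \xrightarrow{\epsilon} f^* \circ LAlb_\bQ.
\]
To verify that $\lambda$ is an isomorphism, it is enough to evaluate on a set of generators of $DM^{\eff}_{\gm}(k; \bQ)$. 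Since this category is generated by motives $M(X)$ of smooth $k$-schemes and $f^* M(X) = M(X_{k'})$, the claim reduces to
\[
LAlb_\bQ(M(X_{k'})) \simeq f^*(LAlb_\bQ(M(X))).
\]
This follows from the explicit description of $LAlb_\bQ(M(X))$ in \cite{bk}, whose constituents (the lattice of geometric connected components, the Albanese variety, and the dual of the N\'eron--Severi group with $\bQ$-coefficients) are each classically compatible with base change along $k \subset k'$.

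\textbf{Main obstacle.} The principal work is in verifying that $\lambda$ is an isomorphism on the generators $M(X)$: while the Beck--Chevalley construction of $\lambda$ is purely formal, showing it is an isomorphism requires unpacking the description of $LAlb_\bQ(M(X))$ in \cite{bk} and applying the base change compatibility of the Picard and Albanese schemes of smooth projective $k$-varieties. The general smooth case reduces to the projective one via Hironaka's resolution of singularities, which is available since $\mathrm{char}\, k = 0$.
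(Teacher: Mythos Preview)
Your argument for (\ref{pbfc1}) is essentially the paper's: reduce to showing $f^{-1}\underline H \simeq \underline{H_{k'}}$ for the relevant group schemes and invoke the commutative diagram of \S\ref{bcg}.

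For (\ref{pbfc2}) you take a genuinely different route. The paper does not unpack the explicit description of $LAlb_\bQ(M(X))$ on generators. Instead it uses the characterization of $Tot_\bQ\circ LAlb_\bQ$ via internal Hom: by \cite{bk}, Cor.~6.2.1, the canonical map $\underline{Hom}(Tot_\bQ\circ LAlb_\bQ(N),\bQ(1))\to\underline{Hom}(N,\bQ(1))$ is an isomorphism for every geometric motive $N$, and $\underline{Hom}(\cdot,\bQ(1))$ is fully faithful on $d_{\leq 1}DM^{\eff}_{\gm}$. The paper then proves a separate lemma that $f^*$ commutes with internal Hom in $DM^{\eff}$ (its Lemma~\ref{ura}), and the claim follows by comparing $\underline{Hom}(\cdot,\bQ(1))$ applied to both sides of the Beck--Chevalley morphism.

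Your approach is in principle viable, but there is a gap. Reducing to generators only tells you that you must check $\lambda_{M(X)}$ is an isomorphism; it does \emph{not} reduce to the mere existence of \emph{some} isomorphism $LAlb_\bQ(M(X_{k'}))\simeq f^*(LAlb_\bQ(M(X)))$. You would still need to verify that the isomorphism produced by base-changing the Albanese scheme, the N\'eron--Severi lattice, and the extension data in \cite{bk} is precisely the Beck--Chevalley map $\lambda$. This coherence check is not automatic and is where the real work lies; it also has to be tracked through the reduction from smooth to smooth projective via resolution. The paper's approach via $\underline{Hom}(\cdot,\bQ(1))$ sidesteps this entirely, since the characterizing property is intrinsic and the Beck--Chevalley morphism is compared directly against a map known to be an isomorphism.
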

\begin{proof} For every abelian group scheme $G$ of finite type over $k$ and for every lattice $\Lambda$ we have canonical isomorphisms
 $$f^{-1}\underline G \simeq \underline {G\times spec\, k'}, \quad
 f^{-1}\underline \Lambda \simeq \underline {\Lambda\times spec\, k'}.$$
Formula (\ref{pbfc1}) now follows from the commutative diagram in \S \ref{bcg}.

The adjunction property of $LAlb_\bQ$, $Tot_\bQ$ together with (\ref{pbfc1}) yield a morphism 
\begin{equation}\label{domoyporamne} 
\alpha: f^*\circ LAlb_\bQ \to LAlb_\bQ \circ f^*.
\end{equation}
  To prove that $\alpha$ is an isomorphism we shall need the following general property of $f^*$.
Recall from (\cite{v1}, Proposition 3.2.8) that for every $E, \in DM^{\eff}_{\gm}(k; A)$,  $ G \in DM^{\eff}(k; A)$  there is inner Hom-object   $\underline{Hom}(E , G) \in  DM^{\eff}(k; A)$. Set $f^*_{tr}=\Ho(f^*)$.
  \begin{lm}\label{ura} The morphism
\begin{equation}\label{domoypora} 
f^*_{tr} (\underline{Hom}(E , G)) \to \underline{Hom}(f^*_{tr}(E) , f^*_{tr}(G))
\end{equation}
defined by the distinguished element of the group
$$  Hom( f^*_{tr}(\underline{Hom}(E , G)) \otimes f^*_{tr}(E) , f^*_{tr}(G))      \leftarrow    Hom( \underline{Hom}(E , G) \otimes E , G). $$
 is an isomorphism.
\end{lm}
\begin{proof}
Recall from (\cite{bv}, \S 4.4) that the projection
$$P: D(PSh_{tr}(Sm_k))\to DM^{\eff}(k; A)$$
has a right adjoint functor $C^M$ that identifies the category
$DM^{\eff}(k; A)$ with the right-orthogonal complement
$$ I^{Zar, \Delta \, \perp}_{tr}(k) \subset D(PSh_{tr}(Sm_k)).$$
We shall first show that the functor
$f^{-1}$ takes $I^{Zar, \Delta \, \perp}_{tr}(k)$ into $I^{Zar, \Delta \, \perp}_{tr}(k') \subset D(PSh_{tr}(Sm_{k'}))$. In fact,
\begin{equation}\label{ezhik}
f^{-1}(I^{Zar \, \perp}_{tr}(k)) \subset I^{Zar \, \perp}_{tr}(k'), \quad  f^{-1}(I^{\Delta \, \perp}_{tr}(k)) \subset I^{\Delta \, \perp}_{tr}(k').
\end{equation}
Let us just check the first inclusion. Let $G\in I^{Zar \, \perp}_{tr}(k)$, $X\in Sm_{k'}$, $U_1\cup U_2 =X$ an open covering of a smooth scheme over $k'$, and let
$$MV(U_1, U_2 ,X): = \mathbb{Z}_{tr}[U_1\cap U_2]\to \mathbb{Z}_{tr}[U_1]\oplus \mathbb{Z}_{tr}[U_2] \to \mathbb{Z}_{tr}[X]$$
be the Mayer-Vietoris complex. We have to show that
 $$Hom_{D(PSh_{tr}(Sm_{k'}))}(MV(U_1,U_2,X), f^*G)=0.$$
There exist $Z\in Sm_k$, an open covering $\tilde U_1\cup \tilde U_2 =Z$, and a $k$-morphism $h: X\to Y$ such that $U_i=h^{-1}(\tilde U_i)$. We have
$$ Hom_{D(PSh_{tr}(Sm_{k'}))}(MV(U_1,U_2,X), f^*G)=   $$
$$ \underset {X \stackrel {g}{\to} Y  \stackrel {g'}{\to} Z } {\colim}
Hom_{D(PSh_{tr}(Sm_{k}))}(MV(g^{\prime -1}(\tilde U_1),g^{\prime -1}(\tilde U_2), Y), G)=0.$$
Here the colimit is taken over the category of triples $(Y\in Sm_k, g,g')$ with $g\circ g'=h$. Proof of the second inclusion in  (\ref{ezhik}) is similar.

As a consequence we see that that the morphism $f^{-1}\circ C^M \to C^M \circ  f^*_{tr}$ induced by $P\circ f^{-1}\circ C^M \to   f^*_{tr}$ is an isomorphism. Let us also observe a natural isomorphism
$$ C^M \underline{Hom}_{DM^{\eff}(k; A)}(P (F) , G) \simeq \underline{Hom}_{D(PSh_{tr}(Sm_k))}(F , C^M (G))$$
coming from the monoidal structure on the functor $P$.

Now we are ready to prove the Lemma. We shall check that  $C^M$ applied to the morphism (\ref{domoypora}) is an isomorphism. Choosing $F\in D(PSh_{tr}(Sm_k))$ with
$P(F)=E$ and using the above remarks we reduce our problem to proving that morphism
$$  f^{-1}\underline{Hom}_{D(PSh_{tr}(Sm_k))}(F , C^M (G)) \to  \underline{Hom}_{D(PSh_{tr}(Sm_{k'}))}(f^{-1} (F) , f^{-1} C^M (G))$$
is an isomorphism\footnote{The proof below shows that $C^M (G)$ can be replaced by an arbitrary complex of presheaves with transfers.} . Moreover, it will suffice to show this for $F= A_{tr}[Z]$, where $Z\in Sm_k$.  For any $X \in Sm_{k'}$,  we have
$$ Hom (A_{tr}[X],  f^{-1}\underline{Hom}(A_{tr}[Z] , C^M (G) ) )
 \simeq $$
$$\underset {X \stackrel {g}{\to} Y } {\colim} \,
Hom( A_{tr}[Y\times Z], C^M (G) ) \simeq
  \underset {X \times _k Z \stackrel {g'}{\to} Y' } {\colim}
Hom ( A_{tr}[Y'], C^M (G) ) $$
$$\simeq
 Hom (A_{tr}[X], \underline{Hom} (f^{-1} ( A_{tr}[Z]  ) , f^{-1} C^M (G))).  $$

\end{proof}

Let us prove that,  for every $M\in  DM^{\eff}_{\gm}(k; \mathbb{Q})$,  morphism $\Ho(\alpha)(M)$ (see (\ref{domoyporamne})) is an isomorphism.  Since the functor $\underline{Hom}(\cdot\, , \mathbb{Q}(1))$ is fully faithful on the subcategory $d_{\leq 1}DM^{\eff}_{\gm}(k'; \mathbb{Q})\subset DM^{\eff}_{\gm}(k'; \mathbb{Q})$   (\cite{bk}, Proposition 4.4.1) it is enough to prove that the morphism
\begin{equation}\label{pbinner}
\underline{Hom}(\Ho(f^* \circ Tot_\bQ \circ LAlb_\bQ)(M) , \mathbb{Q}(1))\to
\underline{Hom}(\Ho(Tot_\bQ \circ LAlb_\bQ \circ f^*) (M) , \mathbb{Q}(1))
\end{equation}
is an isomorphism.
By (\cite{bk}, Cor. 6.2.1), for any geometric effective motive $N$, the morphism
$$\underline{Hom}((Tot_\bQ \circ LAlb_\bQ)(N) , \mathbb{Q}(1))\to
\underline{Hom}( N , \mathbb{Q}(1))$$
induced by $N \to \Ho(Tot_bQ \circ LAlb_\bQ)(N)$ is an isomorphism. Applying this to $N = \Ho(f^*) (M)$ we see that (\ref{pbinner}) is an isomorphism if and only if so is the morphism
\begin{equation}\label{pbinnerr}
\underline{Hom}(\Ho( f^* \circ Tot_\bQ \circ LAlb_\bQ)(M) , \mathbb{Q}(1))\to \underline {Hom}( \Ho(f^*) (M) , \mathbb{Q}(1))
\end{equation}
given by $M \to \Ho(Tot_\bQ \circ LAlb_\bQ)(M)$. Lemma \ref{ura} identifies
 (\ref{pbinnerr}) with the pullback of the isomorphism $$\underline{Hom}(\Ho(Tot_\bQ \circ LAlb_\bQ)(M) , \mathbb{Q}(1))\simeq
\underline{Hom}( M , \mathbb{Q}(1)).$$
\end{proof}

\subsection{Beginning of the proof of (\ref{hodgec1}).}\label{redstep} It will suffice to construct isomorphism (\ref{hodgec1}) for $A=\bZ$. By Proposition \ref{bchred}  we may assume that $k=\bC$.  Corollary \ref{comst} together with formula (\ref{etc}) imply that for every $M\in {\cM}_1(\bC)$
 \begin{equation}\label{vanishing}
H^i (R_{\mathbb{Z}}^{Hodge}\circ Tot_\bZ(M))=0,  \quad i\ne 0.
 \end{equation}
 Thus, by Theorem \ref{bounded} it is enough to construct an isomorphism
 $$ H^0 (R_{\mathbb{Z}}^{Hodge}\circ Tot_\bZ) \simeq H^0(\overline {Tot}_\bZ \circ T^{Hodge}_\bZ): {\cM}_1(\bC)\to MHS_{\eff}^\bZ.$$
Every such isomorphism extends uniquely to (\ref{hodgec1}).

\begin{rem}\label{wrl} Theorem \ref{bounded} is not needed for the weaker result
 $$\Ho(R_{\mathbb{Z}}^{Hodge}\circ Tot_\bZ) \simeq \Ho(\overline {Tot}_\bZ \circ T^{Hodge}_\bZ).$$
For this it suffices to apply the following elementary Lemma.
\begin{lm}(\cite{vol2}, Lemma 8)
 Let $\cE$ and $\cE'$ be abelian categories
and let $\Phi: D^b(\cE) \to D^b(\cE')$ be a triangulated
functor between the corresponding derived categories. 
Assume that, for every object  $X\in \cE$ and $i\ne0$,  we have $H^i(\Phi(X))=0$. 
Assume, in addition, that $\cE$ has homological dimension 1.
 Then there exists a unique isomorphism between $\Phi$
and the derived functor of the  restriction
$H^0(\Phi_{|\cE}): \cE \to \cE'$ extending the identity isomorphism on $\cE$
that commutes with the translation functor.
\end{lm}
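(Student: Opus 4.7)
The plan is to first construct the honest additive functor $F := H^0(\Phi|_{\cE}): \cE \to \cE'$ and show it is exact, then produce the isomorphism with $RF$ (the termwise extension of $F$ to bounded complexes) via a splitting argument relying on the hereditary hypothesis. Exactness of $F$ is immediate: a short exact sequence $0 \to A \to B \to C \to 0$ in $\cE$ gives a distinguished triangle in $D^b(\cE)$ that $\Phi$ sends to a triangle in $D^b(\cE')$; by the hypothesis $H^i(\Phi(X))=0$ for $X\in\cE$, $i\ne 0$, the associated long exact sequence collapses to $0\to F(A)\to F(B)\to F(C)\to 0$. Exactness of $F$ then guarantees that the termwise extension $RF:D^b(\cE)\to D^b(\cE')$ is a well-defined triangulated functor with the same vanishing property.

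The second ingredient is that, since $\cE$ has homological dimension $\le 1$, every $Y \in D^b(\cE)$ admits a (non-canonical) isomorphism $Y \simeq \bigoplus_i H^i(Y)[-i]$. This is proved by induction on the width of the cohomological support: the connecting morphism of the truncation triangle $H^a(Y)[-a] \to Y \to \tau^{>a}Y \to H^a(Y)[-a+1]$ lives in $\bigoplus_{i>a}\Ext^{i-a+1}(H^i(Y),H^a(Y))$ with every exponent $\ge 2$, and hence vanishes. Consequently both $RF(Y)$ and $\Phi(Y)$ are non-canonically isomorphic to $\bigoplus_i F(H^i(Y))[-i]$. To produce a canonical isomorphism $\alpha_Y$, one chooses a splitting $s_Y:\bigoplus_i H^i(Y)[-i] \iso Y$ inducing the identity on cohomology and sets $\alpha_Y:=\Phi(s_Y)\circ RF(s_Y)^{-1}$.

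The main obstacle is to verify that $\alpha_Y$ is independent of $s_Y$, natural in $Y$, and unique among such isomorphisms extending the identity on $\cE$. All three statements reduce to the single claim that, after the canonical identification $\Phi|_{\cE}=F=RF|_{\cE}$, the two triangulated functors $RF$ and $\Phi$ agree on $\Hom_{D^b(\cE)}(A,B[n])$ for $A,B\in\cE$ and $n\in\{0,1\}$. For $n=0$ this is tautological. For $n=1$, any class $\xi\in\Ext^1(A,B)$ is the connecting morphism of the triangle determined by a short exact sequence $0\to B\to C\to A\to 0$ in $\cE$; applying either $RF$ or $\Phi$ yields a distinguished triangle $F(B)\to F(C)\to F(A)\to F(B)[1]$ whose first two morphisms coincide with $F$ applied to $B\to C$ and $C\to A$, so its connecting morphism is the class in $\Ext^1(F(A),F(B))$ of the short exact sequence $0\to F(B)\to F(C)\to F(A)\to 0$ — the same element for both functors. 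Given this comparison, any automorphism of $\bigoplus_i H^i(Y)[-i]$ inducing the identity on cohomology decomposes into bidegree $0$ and $1$ components, each sent to the same morphism by $RF$ and $\Phi$, which yields independence of $s_Y$; naturality of $\alpha$ follows from the same decomposition applied to morphisms $Y\to Y'$; and uniqueness is obtained by applying naturality of a putative automorphism $\gamma$ of $\Phi$ to the inclusions $H^i(Y)[-i]\to Y$ from any splitting, which pins down $\gamma_Y=\id$.
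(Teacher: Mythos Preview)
The paper does not actually prove this lemma; it merely cites it from \cite{vol2} (Lemma 8) inside Remark~\ref{wrl}. So there is no ``paper's own proof'' to compare against. Your argument, however, is the standard and correct one, and is essentially what one finds in \cite{vol2}: use heredity of $\cE$ to split every object as $\bigoplus H^i(Y)[-i]$, define $\alpha_Y=\Phi(s_Y)\circ RF(s_Y)^{-1}$, and reduce well-definedness, naturality, and uniqueness to the agreement of $\Phi$ and $RF$ on $\Hom(A,B)$ and $\Ext^1(A,B)$ for $A,B\in\cE$.

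One small point deserves a word of justification. When you assert that the connecting morphism of the triangle $F(B)\to F(C)\to F(A)\to F(B)[1]$ is \emph{the} Ext class of the short exact sequence, you are implicitly using that the third arrow in a distinguished triangle with the first two arrows fixed is unique here. This is not automatic in a general triangulated category, but it holds in this situation: given two distinguished completions $\delta,\delta'$ of $F(B)\xrightarrow{F(\iota)}F(C)\xrightarrow{F(\pi)}F(A)$, axiom TR3 produces $\alpha\in\End_{\cE'}(F(A))$ with $\alpha\circ F(\pi)=F(\pi)$ and $\delta'\circ\alpha=\delta$; since $F(\pi)$ is an epimorphism in $\cE'$ and $\Hom_{D^b(\cE')}(F(C),F(A))=\Hom_{\cE'}(F(C),F(A))$, one gets $\alpha=\id$ and hence $\delta=\delta'$. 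With this remark your proof is complete.
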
   
\end{rem}

\subsection{Computation of  $H_0 (R_{\mathbb{Z}}^{Betti}\circ Tot_\bZ)$.}\label{betticompute} 
We shall construct in (\ref{eqkey6})  a functorial isomorphism of abelian groups:
\begin{equation}\label{mains}
\Theta_M:  H^0(R_{\mathbb{Z}}^{Betti} \circ Tot_{\mathbb{Z}}(M)) \simeq   H^0(\overline {Tot}_\bZ \circ T^{Betti}_\bZ(M)) = ker( \Lambda(\bC) \oplus \fg
\stackrel{u\oplus exp}{\longrightarrow} G(\mathbb{C})),
\end{equation}
where $M=[\Lambda \rar{u} G]\in  {\cM}_1(\bC)$,  $\fg$ is the Lie algebra of $G$, and  $exp: \fg \to G(\mathbb{C})$ is the exponential map.
 In the next subsection we check that $\Theta_M$ is compatible with the Hodge structures.

To construct  $\Theta_M$ we consider an auxiliary functor from the category of 1-motives to the category of complexes of abelian groups that takes 
$M$ to the complex
$$C^{sing}_\bZ(M):= cone(\Lambda(\bC) \to N\, Maps(\Delta^{\cdot},
 G(\mathbb{C})))[-1],$$ 
where $N\, Maps(\Delta^{\cdot}, G(\mathbb{C}))$ is the
normalized chain complex of the simplicial abelian group
$Maps(\Delta^{\cdot}, G(\mathbb{C}))$.  Recall that for every abelian Lie group $P$
the complex $N\, Maps(\Delta^{\cdot},
P)$ computes the homotopy groups of $P$. In particular,
$$
H^i(N\, Maps(\Delta^{\cdot}, G(\mathbb{C})))=
\begin{cases}
0 & \text{if $i\ne -1 $}\\
\pi_1(G(\mathbb{C})) & \text{otherwise.}
\end{cases}
$$
We construct a functorial quasi-isomorphism 
$$\phi: C^{sing}_\bZ(M) \rar{}  cone(\Lambda(\bC) \oplus \fg \to  G(\mathbb{C}))[-1] $$
as follows.
   $$
\def\normalbaselines{\baselineskip20pt
\lineskip3pt  \lineskiplimit3pt}
\def\mapright#1{\smash{
\mathop{\to}\limits^{#1}}}
\def\mapdown#1{\Big\downarrow\rlap
{$\vcenter{\hbox{$\scriptstyle#1$}}$}}
\begin{matrix}
   \to & N_2 Maps(\Delta^{\cdot}, G(\mathbb{C}))  &   \stackrel{d_1}{\to} &
   \Lambda \oplus
 N_1 Maps(\Delta^{\cdot}, G(\mathbb{C})) & \stackrel{d_0}{\to} & N_0 Maps(\Delta^{\cdot}, G(\mathbb{C})) \cr
  & \mapdown{0}  & & \mapdown{\phi_0}& & \mapdown{Id}\cr
  \to &  0  & \to &  \Lambda(\bC) \oplus \fg&
\to & G(\mathbb{C}) .
\end{matrix}
$$
 Here the map $\phi_0 $ is the sum of the identity map on $\Lambda(\bC)$ and
the homomorphism
\begin{equation}\label{dkc}
 N_1\, Maps(\Delta^{\cdot}, G(\mathbb{C})) \to
\fg
\end{equation}
 defined as follows. An element of $ N_1
Maps(\Delta^{\cdot}, G(\mathbb{C}))$ is a continuous map $\gamma:
\Delta ^1 =[0,1] \to G(\mathbb{C})$ such that $\gamma(0)=0$. Let
$\tilde \gamma: [0,1] \to \fg$ be the lifting of $\gamma$ such
that $\tilde \gamma(0)=0$. The map (\ref{dkc}) takes $\gamma $ to
$\tilde \gamma(1)$.
\begin{pr}\label{opyat}
 The morphism $\phi $ is a quasi-isomorphism.
\end{pr}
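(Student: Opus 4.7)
The plan is to identify $\phi$ as the morphism induced on mapping cones by the identity on $\Lambda(\bC)$ together with a simpler chain map. Both the source and the target of $\phi$ are, up to a shift, mapping cones of morphisms out of $\Lambda(\bC)$: the source is $cone(u_{*}:\Lambda(\bC)\to N\,Maps(\Delta^{\cdot},G(\bC)))[-1]$ by definition, and a direct inspection identifies the target with $cone(u:\Lambda(\bC)\to [\fg\xrightarrow{\exp}G(\bC)])[-1]$, where the two-term complex has $\fg$ in degree $-1$ and $G(\bC)$ in degree $0$. Under these identifications, $\phi$ is the morphism induced by the identity on $\Lambda(\bC)$ and by the chain map
\[
\psi:N\,Maps(\Delta^{\cdot},G(\bC))\longrightarrow [\fg\xrightarrow{\exp}G(\bC)]
\]
that is the identity in degree $0$, the endpoint-of-lift map $\gamma\mapsto\tilde\gamma(1)$ in degree $-1$, and zero in all lower degrees. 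By the two-out-of-three property for maps of distinguished triangles, it will suffice to show that $\psi$ is a quasi-isomorphism.

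To check this I compute the cohomology of both sides. On the right, $\exp:\fg\to G(\bC)$ is the universal covering of the connected complex Lie group $G(\bC)$, so $\Coker(\exp)=0$, $\Ker(\exp)=\pi_{1}(G(\bC))$, and the complex has no cohomology outside degrees $-1,0$. On the left, the Dold--Kan correspondence together with the classical identification $|Maps(\Delta^{\cdot},G(\bC))|\simeq G(\bC)$ gives $H^{-i}(N\,Maps)\cong\pi_{i}(G(\bC))$. Since $\fg$ is contractible, $G(\bC)$ is a $K(\pi_{1}(G(\bC)),1)$, so $\pi_{0}=\pi_{i}=0$ for $i\geq 2$ and $\pi_{1}=\pi_{1}(G(\bC))$. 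The induced map $H^{-1}(\psi)$ is precisely the standard monodromy isomorphism $\pi_{1}(G(\bC))\iso\Ker(\exp)$ sending a loop $\gamma$ based at $0$ to the endpoint $\tilde\gamma(1)$ of its unique lift to $\fg$ starting at $0$.

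The only point of substance is to verify that $\psi$ is a chain map: concretely, that $\widetilde{d_{1}\sigma}(1)=0\in\fg$ for every normalized $2$-simplex $\sigma:\Delta^{2}\to G(\bC)$. Because $\Delta^{2}$ is simply connected, one may lift $\sigma$ to $\tilde\sigma:\Delta^{2}\to\fg$ with $\tilde\sigma(v_{0})=0$; the normalization of $\sigma$ forces it to vanish on the two edges incident to $v_{0}$, so $\tilde\sigma$ vanishes on those edges as well, and in particular $\tilde\sigma(v_{2})=0$. The remaining edge then lifts $d_{1}\sigma$ as a path from $0$ to $0$ in $\fg$. The main obstacle, such as it is, lies in this normalization bookkeeping; once it is in place, the cohomology computation is entirely standard.
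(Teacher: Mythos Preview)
Your proof is correct and reaches the same conclusion as the paper, but the organization differs. You reduce, via a map of cone triangles and two-out-of-three, to showing that the chain map $\psi: N\,Maps(\Delta^\cdot, G(\bC)) \to [\fg \xrightarrow{\exp} G(\bC)]$ is a quasi-isomorphism, and then compute the cohomology of both sides directly: the left is $\pi_\ast(G(\bC))$ (as the paper itself records just before defining $\phi$), the right is $\ker(\exp)[1]$, and the induced map in degree $-1$ is the monodromy identification. The paper instead introduces the auxiliary complex $N\,Maps(\Delta^\cdot, \fg)$ and the composite $s = \exp_\ast$ followed by the inclusion into $C^{sing}_\bZ(M)$, then factors $\phi$ through $\hat\phi: cone(s) \to T^{Betti}_\bZ(M)$; it shows $\hat\phi$ is a quasi-isomorphism by observing that $\exp_\ast$ is an isomorphism on $N_i$ for all $i>0$ (unique lifting of simplices with all but one face at $0$), and concludes since $N\,Maps(\Delta^\cdot, \fg)$ is acyclic. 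Your route is slightly more direct and avoids the auxiliary complex; the paper's route makes the role of the covering map $\exp$ and the contractibility of $\fg$ structurally explicit via the factorization. Both rely on the same ingredients, and your verification that $\psi$ is a chain map (lifting the whole $2$-simplex rather than arguing the boundary loop is null-homotopic) is a pleasant variant of the paper's check that $\phi_0\, d_1 = 0$.
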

\begin{cor} There is a functorial isomorphism
$$H^0(\phi):  H^0(C^{sing}_\bZ(M))\iso  H^0(\overline {Tot}_\bZ \circ T^{Betti}_\bZ (M)).$$
\end{cor}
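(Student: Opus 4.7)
The plan is to establish that $\phi$ is a quasi-isomorphism by identifying the cohomology of source and target --- both should concentrate in degree zero as the same extension of $\Lambda(\bC)$ by $\pi_1(G(\bC))$ --- and then verifying that $\phi$ intertwines these extensions.

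First, I would exploit the cone structure of $C^{sing}_\bZ(M)$ to obtain the distinguished triangle
$$C^{sing}_\bZ(M) \to \Lambda(\bC) \xrightarrow{u} N\, Maps(\Delta^{\cdot}, G(\bC)).$$
Combined with the already-recorded computation $H^i(N\, Maps(\Delta^{\cdot}, G(\bC))) = 0$ for $i \ne -1$ and $H^{-1} = \pi_1(G(\bC))$, the long exact sequence forces $H^i(C^{sing}_\bZ(M)) = 0$ for $i \ne 0$ and yields a short exact sequence
$$0 \to \pi_1(G(\bC)) \to H^0(C^{sing}_\bZ(M)) \to \Lambda(\bC) \to 0.$$

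Second, I would analyze the target. Since $G$ is semi-abelian, $\exp: \fg \to G(\bC)$ is a covering map, hence surjective with kernel canonically identified with $\pi_1(G(\bC))$. Therefore $u \oplus \exp$ is surjective, so the target has $H^1 = 0$ and $H^0 = \ker(u \oplus \exp)$, which sits in the parallel extension
$$0 \to \pi_1(G(\bC)) \to \ker(u \oplus \exp) \to \Lambda(\bC) \to 0,$$
where the first arrow sends $\kappa \in \ker(\exp)$ to $(0, \kappa)$ and the second is the projection to $\Lambda(\bC)$.

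Finally, I would verify that $\phi$ induces an isomorphism of these two extensions. The projection to $\Lambda(\bC)$ is preserved since $\phi_0 = \mathrm{Id} \oplus \widetilde{(\cdot)}$ acts as the identity on the $\Lambda(\bC)$ summand. On the subgroup $\pi_1(G(\bC))$, a class of the source is represented by a loop $\gamma \in N_1\, Maps(\Delta^{\cdot}, G(\bC))$ with $\gamma(0) = \gamma(1) = 0$ and $\lambda = 0$; $\phi$ sends it to $(0, \tilde\gamma(1))$ with $\tilde\gamma(1) \in \ker(\exp)$. The assignment $[\gamma] \mapsto \tilde\gamma(1)$ is precisely the canonical identification $\pi_1(G(\bC)) \simeq \ker(\exp)$ coming from the universal cover $\fg \to G(\bC)$. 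The 5-lemma then concludes the argument. The main obstacle I anticipate is tracking the sign and normalization conventions so that the connecting homomorphism of the top triangle --- obtained via the identification $N\, Maps(\Delta^{\cdot}, G(\bC)) \simeq \pi_1(G(\bC))[1]$ --- matches the natural inclusion $\ker(\exp) \hookrightarrow \ker(u \oplus \exp)$ appearing in the target.
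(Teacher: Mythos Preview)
Your argument is correct and in fact proves the full Proposition (that $\phi$ is a quasi-isomorphism), from which the Corollary is immediate. The paper, however, takes a different route: rather than computing the cohomology of source and target separately and matching the resulting extensions via the 5-lemma, it introduces the auxiliary complex $N\,Maps(\Delta^{\cdot},\fg)$, which is acyclic since $\fg$ is contractible, and considers the composite $s: N\,Maps(\Delta^{\cdot},\fg)\to C^{sing}_\bZ(M)$ induced by $\exp_*$. The key observation is that $\exp_*: N_i\,Maps(\Delta^{\cdot},\fg)\to N_i\,Maps(\Delta^{\cdot},G(\bC))$ is an isomorphism for every $i>0$ (a simplex sending all faces but one to $0$ lifts uniquely to the universal cover), so $\phi$ factors through a quasi-isomorphism $\hat\phi: cone(s)\to T^{Betti}_\bZ(M)$; acyclicity of $N\,Maps(\Delta^{\cdot},\fg)$ then forces $\phi$ itself to be a quasi-isomorphism. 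The paper's approach sidesteps precisely the sign and connecting-map bookkeeping you flag as the main obstacle, while your approach has the virtue of making the extension structure of $H^0$ by $\pi_1(G(\bC))$ and $\Lambda(\bC)$ completely explicit.
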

\begin{proof}[Proof of \ref{opyat}]
First, we have to show that $\phi$ is a morphism of complexes.\\
({\it i}) $\phi_0 \, d_1=0$. Indeed, for every $\nu \in  N_1
Maps(\Delta^{\cdot}, G(\mathbb{C}))$ the map $d_1(\nu): [0,1]\to
G(\mathbb{C})$ takes the boundary points $0$ and $1$ to $0$.
Moreover, the induced map $S^1\to G(\mathbb{C})$ is contractible. It
follows that
$\widetilde {d_1 (\nu)}(1)=0$.\\
({\it ii}) $ d_0 =  (u\oplus exp)\, \phi_0$. Indeed, for every
$(\lambda \oplus \gamma)\in  \Lambda \oplus
 N_1\, Maps(\Delta^{\cdot}, G(\mathbb{C}))$
$$ d_0 (\lambda \oplus \gamma)= u(\lambda )\cdot \gamma(1)=
(u\oplus exp)\, \phi_0(\lambda \oplus \gamma).$$ To show that $\phi$
is a quasi-isomorphism consider the morphism
$$N\,Maps(\Delta^{\cdot}, \fg) \stackrel{exp_*}{\longrightarrow}
N\,Maps(\Delta^{\cdot}, G(\mathbb{C} ))$$ 
induced by
the homomorphism $exp:  \fg \to G(\mathbb{C})$. Let
$$ N Maps(\Delta^{\cdot},  \fg) \stackrel{s}{\to} C^{sing}_\bZ(M)$$
be the composition of $exp_*$ and the morphism
$N\,Maps(\Delta^{\cdot}, G(\mathbb{C} ))\to C^{sing}_\bZ(M)$. It is easy
to see that $\phi$ factors through the morphism
$$\hat \phi : cone(s) \to T^{Betti}_{\mathbb{Z}}(M).$$
Observe that the map
$$N_i \,Maps(\Delta^{\cdot},  \fg) \stackrel{exp}{\to} N_i\, Maps(\Delta^{\cdot}, G(\mathbb{C}))$$
is an isomorphism for every $i>0$ (for every continuous map
$\Delta^i \to G(\mathbb{C})$ sending all the faces but one to $0$ lifts uniquely to a map $\Delta^i \to \fg$ with the same
property). It follows that $\hat \phi$ is a quasi-isomorphism. Since
the complex $N Maps(\Delta^{\cdot}, \fg)$ is acyclic (for its
cohomology groups are the homotopy groups of $ \fg$), $\phi$ is
a quasi-isomorphism as well.
 \end{proof}
It remains to construct a functorial quasi-isomorphism
\begin{equation}\label{mainequation}
H^0(R_{\mathbb{Z}}^{Betti}\circ Tot_{\mathbb{Z}})(M) \iso H^0(C^{sing}_\bZ(M)).
\end{equation}
With rational coefficients (\ref{mainequation}) can be easily derived from  the Eilenberg-MacLane cube construction. We explain this short proof in \S \ref{e.m.}. 
The integral statement is more involved.
Consider the double complex of presheaves with transfers
    $$
\def\normalbaselines{\baselineskip20pt
\lineskip3pt  \lineskiplimit3pt}
\def\mapright#1{\smash{
\mathop{\to}\limits^{#1}}}
\def\mapdown#1{\Big\downarrow\rlap
{$\vcenter{\hbox{$\scriptstyle#1$}}$}}
\begin{matrix}
  \mathbb{Z}_{tr}[\Lambda \times \Lambda] &    \stackrel{p_1 + p_2 - m }{\longrightarrow} &
\mathbb{Z}_{tr}[\Lambda]  \cr
 \mapdown{u\times u } & &\mapdown{u} \cr
 \mathbb{Z}_{tr}[G\times G] &    \stackrel{p_1 + p_2 - m  }{\longrightarrow}   &
\mathbb{Z}_{tr}[G],
\end{matrix}
$$
where $p_i, m : \mathbb{Z}_{tr}[\Lambda \times
\Lambda] \to \mathbb{Z}_{tr}[\Lambda]$  (resp. $p_i, m :
\mathbb{Z}_{tr}[G \times G] \to \mathbb{Z}_{tr}[G]$) are the maps
induced by the projections and the addition operation on $\Lambda$
(resp. $G$). Denote by $\tilde M$ the associated total complex
shifted so that $\mathbb{Z}_{tr}[G]$ is in cohomological degree $1$.
We shall use the same notation $\tilde M $ for the corresponding
motive. The construction of $\tilde M$ is functorial: sending $M$ to
$\tilde M$ we get a (nonadditive) functor from the category of
1-motives to the triangulated category of \'etale Voevodsky motives.  Next, we
have a functorial morphism:
$$ \tilde M \to Tot_{\mathbb{Z}}(M),$$
 induced by the map of double complexes:
    $$
\def\normalbaselines{\baselineskip20pt
\lineskip3pt  \lineskiplimit3pt}
\def\mapright#1{\smash{
\mathop{\to}\limits^{#1}}}
\def\mapdown#1{\Big\downarrow\rlap
{$\vcenter{\hbox{$\scriptstyle#1$}}$}}
\begin{matrix}
  \mathbb{Z}_{tr}[\Lambda \times \Lambda] &    \to &
\mathbb{Z}_{tr}[\Lambda] && \longrightarrow && \underline \Lambda \cr
 \mapdown{u\times u } & &\mapdown{u} &&&&  \mapdown{u} \cr
 \mathbb{Z}_{tr}[G\times G] &    \to & \mathbb{Z}_{tr}[G] && \longrightarrow
 && \underline G.
\end{matrix}
$$
Consider the induced map of Betti realizations
\begin{equation}\label{ax11}
 \Ho(R_{\mathbb{Z}}^{Betti})( \tilde M )\to \Ho(R_{\mathbb{Z}}^{Betti}\circ
Tot_{\mathbb{Z}})(M).
\end{equation} 
  Since the complex
$R_{\mathbb{Z}}^{Betti}\circ Tot_{\mathbb{Z}}(M)$ has nontrivial cohomology only in degree
$0$ (see (\ref{vanishing}))  we get from (\ref{ax11})
$$\psi: \tau_{\geq 0} \Ho(R_{\mathbb{Z}}^{Betti})( \tilde M
)\to  \tau_{\geq 0} \Ho(R_{\mathbb{Z}}^{Betti}\circ {Tot}_{\mathbb{Z}})(M)\simeq \Ho(R_{\mathbb{Z}}^{Betti}\circ {Tot}_{\mathbb{Z}})(M)$$
\begin{pr}\label{wer}
 The morphism $\psi $ is a quasi-isomorphism.
\end{pr}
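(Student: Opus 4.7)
Since $R^{Betti}_\bZ(Tot_\bZ(M))$ is concentrated in cohomological degree $0$ by (\ref{vanishing}), to show $\psi$ is a quasi-isomorphism it suffices to verify: (i) $H^i(R^{Betti}_\bZ(\tilde M))=0$ for all $i\geq 1$; and (ii) the induced map $H^0(R^{Betti}_\bZ(\tilde M))\to H^0(R^{Betti}_\bZ(Tot_\bZ(M)))$ is an isomorphism.

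For (i), apply the spectral sequence of the bicomplex underlying $R^{Betti}_\bZ(\tilde M)$, filtered by the external ($\tilde M$-)degree $p$. Its $E_1$-page is $E_1^{p,q}=H_{-q}(X_p(\bC))$, where $X_p$ is the scheme sitting in degree $p$ of $\tilde M$. Because $\Lambda(\bC)$ is discrete, $q\leq 0$, and $p\in\{-1,0,1\}$, the only nontrivial entry with $p+q\geq 1$ is $E_1^{1,0}=H_0(G(\bC))=\bZ$. The differential $d_1^{0,0}\colon \bZ[\Lambda(\bC)]\oplus\bZ\to\bZ$ induced by the pair $(u,\, p_1+p_2-m)$ restricts on the second summand to $p_{1*}+p_{2*}-m_*=1+1-1=1$, hence is surjective. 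Thus $E_2^{1,0}=0$, and (i) follows because no other bidegrees contribute above the diagonal $p+q=0$.

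For (ii), consider the short exact sequence of complexes of motives
\[0\to E_{tot}\to\tilde M\to F_{tot}\to 0,\qquad E_{tot}=[\bZ_{tr}[G{\times}G]\to\bZ_{tr}[G]],\quad F_{tot}=[\bZ_{tr}[\Lambda{\times}\Lambda]\to\bZ_{tr}[\Lambda]],\]
placed in degrees $(0,1)$ and $(-1,0)$ respectively. Via the natural maps $\bZ_{tr}[X]\to\underline X$ it maps to the SES $0\to\underline G[-1]\to Tot_\bZ(M)\to\underline\Lambda\to 0$. Applying $R^{Betti}_\bZ$ and the five-lemma to the long exact sequences in cohomology, (ii) reduces to checking the outer vertical maps are quasi-isomorphisms in degrees $\geq 0$. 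The $F_{tot}$-case is elementary: $R^{Betti}_\bZ(F_{tot})$ is $[\bZ[\Lambda(\bC)^2]\to\bZ[\Lambda(\bC)]]$ with $H^0=\Lambda(\bC)$, and applying (\ref{vanishing}) to the 1-motive $[\Lambda\to 0]$ identifies $R^{Betti}_\bZ(\underline\Lambda)\simeq\Lambda(\bC)[0]$, the map on $H^0$ being the canonical surjection $\bZ[\Lambda(\bC)]\twoheadrightarrow\Lambda(\bC)$. The $E_{tot}$-case is the principal obstacle: a parallel spectral sequence gives $H^i(R^{Betti}_\bZ(E_{tot}))=0$ for $i\geq 1$ and $H^0\cong H_1(G(\bC))$ (on the antidiagonal $p+q=0$, only $E_\infty^{1,-1}=H_1(G(\bC))$ survives, since $d_1^{0,-1}\colon H_1(G^2)\to H_1(G)$ equals $p_{1*}+p_{2*}-m_*$, which vanishes identically on the $H_1$ of an abelian topological group). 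Applying (\ref{vanishing}) to $[0\to G]$ shows $R^{Betti}_\bZ(\underline G[-1])$ is concentrated in degree $0$, and one identifies the induced map on $H^0$ as the standard isomorphism $H_1(G(\bC))\cong\pi_1(G(\bC))=\ker(\exp\colon\mathfrak g\to G(\bC))$ by tracking a $1$-cycle $\gamma\in C_1(G(\bC))$ sitting in bidegree $(1,-1)$ through the universal covering lift $\tilde\gamma\colon[0,1]\to\mathfrak g$. The chief technical hurdle is precisely this last chain-level identification, which unwinds the definitions against the explicit quasi-isomorphism $\phi$ constructed in Proposition~\ref{opyat}.
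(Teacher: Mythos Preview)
Your overall strategy matches the paper's: split $\tilde M$ along the short exact sequence $0\to E_{tot}\to\tilde M\to F_{tot}\to 0$ (the paper phrases this as the extension $0\to M_1\to M\to M_2\to 0$ with $M_1=[0\to G]$, $M_2=[\Lambda\to 0]$), handle the $\Lambda$-part elementarily, and reduce to the $G$-part. Your spectral-sequence argument for (i) is a fine shortcut; the paper instead gets this as a byproduct of the d\'evissage.

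There is, however, a genuine gap in your treatment of the $E_{tot}$-case. You correctly compute $H^0(R^{Betti}_\bZ(E_{tot}))\cong H_1(G(\bC))$, but you then need to show that the induced map to $H^0(R^{Betti}_\bZ(\underline G[-1]))$ is an isomorphism. You attempt to do this ``by tracking a $1$-cycle $\gamma$ through the universal covering lift,'' invoking Proposition~\ref{opyat}. This does not work: the map $\phi$ of Proposition~\ref{opyat} is a quasi-isomorphism from $C^{sing}_\bZ(M)=\mathrm{cone}(\Lambda(\bC)\to N\,Maps(\Delta^\bullet,G(\bC)))[-1]$ to the explicit complex computing $T^{Betti}_\bZ(M)$. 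Neither of these is $R^{Betti}_\bZ(\underline G[-1])$. The Betti realization $R^{Betti}_\bZ$ is defined by sending the representable motive $\bZ_{tr}[X]$ to $C^{sing}_\bZ(X(\bC))$ and then extending to all of $D\cM^{eff}_{et}$; on the non-representable sheaf $\underline G$ it is \emph{not} given by $N\,Maps(\Delta^\bullet,G(\bC))$, and identifying $R^{Betti}_\bZ(\underline G[-1])$ with that complex is precisely the content of Propositions~\ref{wer} and~\ref{pr39} together --- so using it here is circular.

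The paper fills this gap with Lemma~\ref{roitman}, whose proof is the real crux: one computes $R^{Betti}_\bZ(\underline G)$ by passing through the \'etale realization (Corollary~\ref{comst}), using that $\underline G$ is homotopy invariant so $\cS_\bZ(\underline G)=G(\bC)$, and then invoking the generalized Roitman theorem of \cite{bk} to identify $H_1(C^\Delta(\bZ_{tr}[G])(\bC)\otimes\bZ/l^n)$ with $G(\bC)_{l^n}$. None of this can be replaced by the elementary chain-level computation you sketch; some serious input about Suslin homology of $G$ is unavoidable.
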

\begin{cor} We have a functorial isomorphism
$$H^0(\psi):  H^0(R_{\mathbb{Z}}^{Betti})( \tilde M)\iso H^0(R_{\mathbb{Z}}^{Betti} \circ
Tot_{\mathbb{Z}})(M).$$
\end{cor}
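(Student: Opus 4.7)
The strategy is to verify that both sides of $\psi$ are cohomologically concentrated in degree $0$ and that $\psi$ induces an isomorphism on $H^0$. The right-hand side is already concentrated in degree $0$ by \eqref{vanishing}, with $H^0 \simeq \ker(\Lambda(\bC) \oplus \fg \to G(\bC))$ under $\Theta_M$, so $\tau_{\geq 0}$ acts trivially there. It therefore suffices to show that $\tau_{\geq 0} R^{Betti}_\bZ(\tilde M)$ is concentrated in degree $0$ and then to identify $H^0$.

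For the support statement, observe that $R^{Betti}_\bZ(\bZ_{tr}[X]) \simeq C_\ast^{sing}(X(\bC);\bZ)$ lies in non-positive cohomological degrees, while the shift in the definition of $\tilde M$ places $\bZ_{tr}[G]$ in cohomological degree $1$. This forces $R^{Betti}_\bZ(\tilde M)$ to be supported in degrees $\leq 1$, with the only degree-$1$ contribution coming from $C_0^{sing}(G(\bC)) = \bZ[G(\bC)]$. Since $G(\bC)$ is connected, quotienting by $\partial C_1^{sing}(G(\bC))$ collapses this to $\bZ$, and in this $\bZ$ the relation $[x]+[y]-[x+y]$ coming from the bottom row of the double complex maps to $1+1-1 = 1$, killing $H^1$. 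Thus $\tau_{\geq 0} R^{Betti}_\bZ(\tilde M)$ is concentrated in degree $0$.

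To identify $H^0$, the plan is to construct a chain-level map from $R^{Betti}_\bZ(\tilde M)$ to the auxiliary complex $C^{sing}_\bZ(M)$ of \S\ref{betticompute} and then to invoke the quasi-isomorphism $\phi$ of Proposition \ref{opyat}. A natural candidate sends $\bZ[\Lambda(\bC)]$ identically to the $\Lambda(\bC)$-summand of the cone defining $C^{sing}_\bZ(M)$, sends a singular $1$-simplex $\gamma \in C_1^{sing}(G(\bC))$ to the translated path $t \mapsto \gamma(t) - \gamma(0)$ (now lying in $N_1\,\text{Maps}(\Delta^\cdot, G(\bC))$ since it vanishes at $0$), and extends to $\bZ[G(\bC)^2]$ via a canonical $2$-simplex in $G(\bC)$ with vertices $0, x, x+y$ whose boundary witnesses the identity $[x] + [y] = [x+y]$. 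The main obstacle will be verifying rigorously that this candidate commutes with all the differentials of the total complex and that, after post-composition with $\phi$, it recovers the map induced by the motivic morphism $\tilde M \to Tot_\bZ(M)$; this is a bookkeeping exercise involving the Beilinson-style DG realization of the transfer map $\bZ_{tr}[G] \to \underline G$ together with the normalization built into $C^{sing}_\bZ(M)$. Once this compatibility is established, $H^0(\psi)$ is recognized as an isomorphism, concluding the proof.
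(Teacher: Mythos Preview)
Your computation that $R^{Betti}_\bZ(\tilde M)$ has vanishing $H^1$ is fine and matches what the paper does with the long exact sequence (\ref{kazh}). The gap is in the second half.

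First, a circularity: you invoke $\Theta_M$ to identify $H^0(R^{Betti}_\bZ\circ Tot_\bZ(M))$ with $\ker(\Lambda(\bC)\oplus\fg\to G(\bC))$, but $\Theta_M$ is \emph{defined} in (\ref{eqkey6}) as a composition containing $H^0(\psi)^{-1}$. So you cannot use it here.

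More seriously, your chain-level construction lands in the auxiliary complex $C^{sing}_\bZ(M)$ built from $N\,Maps(\Delta^{\cdot},G(\bC))$, whereas $\psi$ lands in $R^{Betti}_\bZ\circ Tot_\bZ(M)$, which involves the Betti realization of the \emph{non-representable} sheaf $\underline G$. These targets are a priori unrelated: $\underline G$ is not $\bZ_{tr}[G]$, and its Betti realization is not computed by singular chains on $G(\bC)$ in any tautological way. The paper supplies the missing link via Lemma \ref{roitman}, which identifies $R^{Betti}_\bZ(\underline G)\simeq H_1(G(\bC))[1]$ using the generalized Roitman theorem (\cite{bk}, Theorem 14.4.5). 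That is the substantive input, not bookkeeping; your ``main obstacle'' of checking that your map recovers the one induced by $\tilde M\to Tot_\bZ(M)$ is exactly the place where this theorem is needed, and your proposal gives no mechanism for producing it.

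The paper's proof of the Corollary is immediate from Proposition \ref{wer}: one reduces by d\'evissage to the cases $G=0$ (trivial) and $\Lambda=0$, and in the latter case uses Lemma \ref{roitman} to compare $H^0(R^{Betti}_\bZ(\tilde M))\simeq H_1(G(\bC))$ with $H^0(R^{Betti}_\bZ(\underline G[-1]))$. The map you are sketching is essentially the paper's separate map $\rho$ of (\ref{secmap}) (though note the paper uses the additive map $\bZ[Maps(\Delta^{\cdot},G(\bC))]\to Maps(\Delta^{\cdot},G(\bC))$, not path translation); that map is analyzed in Proposition \ref{pr39} and serves a different purpose, namely building $\Theta_M$ \emph{after} $\psi$ is already known to be a quasi-isomorphism.
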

\begin{proof}[Proof of \ref{wer}]  We shall first prove the proposition in the following two
special cases. \\
({\it i})\,  $G=0$. In this case the proposition is equivalent to the
exactness of the sequence
$$\mathbb{Z}[\Lambda(\bC) \times \Lambda(\bC)]   \to \mathbb{Z}[\Lambda(\bC)] \to  \Lambda(\bC) \to 0 .$$
({\it ii})\, $\Lambda =0$. The exact triangle
$$\mathbb{Z}_{tr}[G\times G][-1]     \to  \mathbb{Z}_{tr}[G][-1] \to \tilde
M \to \mathbb{Z}_{tr}[G\times G]$$ yields a long exact sequence
  \begin{equation}\label{kazh} \to H_1(G(\mathbb{C})\times G(\mathbb{C})) \stackrel{p_{1* }+ p_{2*} - m_*  }{\longrightarrow}
  H_1(G(\mathbb{C}))\stackrel{\alpha  }{\longrightarrow} H^0(R_{\mathbb{Z}}^{Betti}( \tilde M
))\to 
\end{equation}
$$H_0(G(\mathbb{C})\times G(\mathbb{C})) \stackrel{p_{1* }+ p_{2*} - m_*  }{\longrightarrow}
  H_0(G(\mathbb{C}))\to H^1(R_{\mathbb{Z}}^{Betti}( \tilde M))\to 0.$$
The map $p_{1* }+ p_{2*} - m_* $ is $0$
on $ H_1$ and an isomorphism on $H_0$.
  It follows that the cohomology groups of $R_{\mathbb{Z}}^{Betti}( \tilde
  M)$ are trivial in positive degrees and identified (via the map $\alpha$) with
  $H_1(G(\mathbb{C}))$ in degree $0$. 
  The following result completes
  the proof of the proposition for type ({\it ii}) 1-motives.
\begin{lm}\label{roitman} The morphism $i: \mathbb{Z}_{tr}[G] \to  \underline G$
induces a quasi-isomorphism \begin{equation}\label{keycomp}
H_1(G(\mathbb{C}))[1] \simeq R_{\mathbb{Z}}^{Betti} (\underline G)
\end{equation}
\end{lm}
 \begin{proof} By Corollary \ref{comst} it suffices to show that, for every prime $l$, $i$ induces a quasi-isomorphism
$$H_1(R_{\mathbb{Z}/l^{\cdot}}^{\et}(\bZ_{tr} [G]))  [1] \simeq R_{\mathbb{Z}/l^{\cdot}}^{\et}(\underline G). $$ 
 By
definition of l-adic realization (\ref{ladicrealization})
this amounts to showing that the map
$$H_1(C^{\Delta}( \bZ_{tr} [G])(\bC) \stackrel{}{\otimes} \mathbb{Z}/l^n) [1] \to G(\mathbb{C})\stackrel{L}{\otimes} \mathbb{Z}/l^n\simeq
 G(\mathbb{C})_{l^n} [1],$$
 where $G(\mathbb{C})_{l^n}$ denotes the group of $l^n$-torsion points of $G(\mathbb{C})$, is a
quasi-isomorphism. 
We have an exact sequence
$$H_1(C^{\Delta}( \bZ_{tr} [G])(\bC) )\otimes \bZ/l^n  \mono H_1(C^{\Delta}( \bZ_{tr} [G])(\bC) \stackrel{}{\otimes} \mathbb{Z}/l^n) \epi H_0(C^{\Delta}( \bZ_{tr} [G])(\bC) )_{l^n}.$$
  Using part (b) of the generalized
Roitman theorem (\cite{bk}, Theorem 14.4.5) we find  that $H_1(C^{\Delta}( \bZ_{tr} [G])(\bC) )\otimes \bZ/l^n =0$. 
Hence,
$H_1(C^{\Delta}( \bZ_{tr} [G])(\bC) \stackrel{}{\otimes} \mathbb{Z}/l^n) $ is isomorphic to  $H_1(C^{\Delta}( \bZ_{tr} [G])(\bC) )_{l^n}$ which maps isomorphically via the Albanese morphism
$$H_0(C^{\Delta}( \bZ_{tr} [G])(\bC) )\to G(\bC)$$ 
to $G(\bC)_{l^n}$ (\cite{bk}, Theorem 14.4.5 (a)).
\end{proof}
To prove the proposition in general, observe that every 1-motive $M$ is an
extension of a 1-motive of type ({\it i}) by a 1-motive of type 
({\it ii}):
$$0\to M_1 \to M \to M_2 \to 0 \quad  $$
and for extensions of this special form the sequence
$$ \tilde M_1 \to \tilde M \to \tilde M_2  $$
can be completed to a distinguished  triangle (since as a complex of presheaves  $\tilde M_1$ is a subcomplex of $\tilde M $ and $\tilde M_2$ is the quotient of 
$\tilde M $ modulo $\tilde M_1$). Hence, in the commutative diagram
   \begin{equation}\label{lemext}
\def\normalbaselines{\baselineskip20pt
\lineskip3pt  \lineskiplimit3pt}
\def\mapright#1{\smash{
\mathop{\to}\limits^{#1}}}
\def\mapdown#1{\Big\downarrow\rlap
{$\vcenter{\hbox{$\scriptstyle#1$}}$}}
\begin{matrix}
   \Ho(R_{\mathbb{Z}}^{Betti}( \tilde M_1 ))&   \to &
\Ho(R_{\mathbb{Z}}^{Betti}( \tilde M ))&\to &  \Ho( R_{\mathbb{Z}}^{Betti}(
\tilde M_2 ))\cr
 \mapdown{ } & &\mapdown{\tilde \psi} &&  \mapdown{} \cr
 \Ho(R_{\mathbb{Z}}^{Betti}\circ Tot_{\mathbb{Z}}) ( M_1 )&   \to
 & \Ho(R_{\mathbb{Z}}^{Betti}\circ Tot_{\mathbb{Z}}(M ))&\to &
\Ho(R_{\mathbb{Z}}^{Betti}\circ Tot_{\mathbb{Z}})( M_2 )
\end{matrix}
\end{equation}
both rows are distinguished triangles. Since the complexes $
R_{\mathbb{Z}}^{Betti}( \tilde M_i )$ are acyclic in positive
degrees $ R_{\mathbb{Z}}^{Betti}( \tilde M )$ is also acyclic in positive degrees. Thus, it
suffices to show that $\tilde \psi$ induces an isomorphism on $H^0$.
The diagram (\ref{lemext}) yields a diagram
$$
\def\normalbaselines{\baselineskip20pt
\lineskip3pt  \lineskiplimit3pt}
\def\mapright#1{\smash{
\mathop{\to}\limits^{#1}}}
\def\mapdown#1{\Big\downarrow\rlap
{$\vcenter{\hbox{$\scriptstyle#1$}}$}}
\begin{matrix}
 H^0(R_{\mathbb{Z}}^{Betti}( \tilde M_1 ))&   \to &
H^0(R_{\mathbb{Z}}^{Betti}( \tilde M ))&\to &
H^0(R_{\mathbb{Z}}^{Betti}( \tilde M_2 )) &\to  0\cr \mapdown{\psi_1
} & &\mapdown{\psi} &&  \mapdown{\psi_2} \cr
H^0(R_{\mathbb{Z}}^{Betti}\circ Tot_{\mathbb{Z}} ( M_1 ))&
\stackrel{\beta}{\hookrightarrow }& H^0(R_{\mathbb{Z}}^{Betti}\circ
Tot_{\mathbb{Z}}(M ))&\to & H^0(R_{\mathbb{Z}}^{Betti}\circ
Tot_{\mathbb{Z}}( M_2 ))& \to 0
\end{matrix}
$$
with exact rows (and injective  map $\beta $). Since the $\psi_i$ are
isomorphisms  $\psi$ is an isomorphism as well.
\end{proof}
Each term of the complex $\tilde M$ is a direct sum of representable
presheaves. It follows, that
 the Betti realization of $ \tilde M $ is canonically isomorphic to the total complex
$C^{sing}_\bZ(\tilde M)$  of the following double complex
    $$
\def\normalbaselines{\baselineskip20pt
\lineskip3pt  \lineskiplimit3pt}
\def\mapright#1{\smash{by
\mathop{\to}\limits^{#1}}}
\def\mapdown#1{\Big\downarrow\rlap
{$\vcenter{\hbox{$\scriptstyle#1$}}$}}
\begin{matrix}
  \mathbb{Z}[\Lambda(\bC) \times \Lambda(\bC)] &    \stackrel{p_1 + p_2 - m }{\longrightarrow} &
\mathbb{Z}[\Lambda(\bC)]  \cr
 \mapdown{u\times u } & &\mapdown{u} \cr
 C^{sing}_\bZ(G(\mathbb{C})\times G(\mathbb{C})) &    \stackrel{p_1 + p_2 - m  }{\longrightarrow}   &
C^{sing}_\bZ(G(\mathbb{C})),
\end{matrix}
$$
  where
$C^{sing}_\bZ(\cdot)$ denotes the normalized singular chain complex of a topological
space {\it i.e.}, the normalized complex of the simplicial abelian group $\bZ[Maps(\Delta^{\cdot}, \cdot)]$. Define a morphism
\begin{equation}\label{secmap}
R_{\mathbb{Z}}^{Betti}( \tilde M )= C^{sing}_\bZ(\tilde M)  \rightarrow
C^{sing}_\bZ(M) 
\end{equation}
$$
\def\normalbaselines{\baselineskip20pt
\lineskip3pt  \lineskiplimit3pt}
\def\mapright#1{\smash{
\mathop{\to}\limits^{#1}}}
\def\mapdown#1{\Big\downarrow\rlap
{$\vcenter{\hbox{$\scriptstyle#1$}}$}}
\begin{matrix}
  \mathbb{Z}[\Lambda(\bC) \times \Lambda(\bC)] &    \to &
\mathbb{Z}[\Lambda(\bC)] && \longrightarrow && \Lambda(\bC) \cr
 \mapdown{u\times u } & &\mapdown{u} &&&& \mapdown{u}\cr
 C^{sing}_\bZ(G(\mathbb{C})\times G(\mathbb{C})) &    \to   &
C^{sing}_\bZ(G(\mathbb{C}))&&\longrightarrow &&  N\,
Maps(\Delta^{\cdot}, G(\mathbb{C})).
\end{matrix}
$$
Here the homomorphism $\mathbb{Z}[\Lambda(\bC)] \to \Lambda(\bC)$ takes a formal combination of elements of $\Lambda(\bC)$ the their sum in $\Lambda(\bC)$ and
the homomorphism of complexes $C^{sing}_\bZ(G(\mathbb{C}))\to  N\,
Maps(\Delta^{\cdot}, G(\mathbb{C}))$ is induced by the homomorphism of simplicial abelian groups $\bZ[Maps(\Delta^{\cdot}, G(\mathbb{C}))]\to Maps(\Delta^{\cdot}, G(\mathbb{C}))$.
Since by Proposition \ref{opyat} the complex $C^{sing}_\bZ(M)$ has nontrivial cohomology only in degree $0$ we get from  (\ref{secmap}) 
$$\rho:  \tau_{\geq 0}C^{sing}_\bZ(\tilde M)  \to
 \tau_{\geq 0} C^{sing}_\bZ(M)\simeq C^{sing}_\bZ(M).$$
\begin{pr}\label{pr39}
 The morphism $\rho $ is a quasi-isomorphism.
\end{pr}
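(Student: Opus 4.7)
\medskip

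The plan is to mimic the three-step argument used in Proposition \ref{wer}: establish the claim first when $G=0$, then when $\Lambda=0$, and then patch via the extension argument. Both sides have cohomology concentrated in degree $0$ --- the target $C^{sing}_\bZ(M)$ by Proposition \ref{opyat}, and $\tau_{\geq 0} C^{sing}_\bZ(\tilde M) = \tau_{\geq 0} R^{Betti}_\bZ(\tilde M)$ by Proposition \ref{wer} (the truncation is quasi-isomorphic to $R_\bZ^{Betti}\circ Tot_\bZ(M)$, which has trivial cohomology outside degree $0$ by (\ref{vanishing})). So it suffices in each case to show that $\rho$ induces an isomorphism on $H^0$.

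\smallskip

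\emph{Case (i), $G=0$.}  The bottom row of the double complex defining $\tilde M$ becomes $\bZ_{tr}[\mathrm{pt}]\stackrel{\mathrm{id}}{\to}\bZ_{tr}[\mathrm{pt}]$, which is acyclic, so $C^{sing}_\bZ(\tilde M)$ is quasi-isomorphic to the top row $\bZ[\Lambda(\bC)\times\Lambda(\bC)] \xrightarrow{p_1+p_2-m} \bZ[\Lambda(\bC)]$, while $C^{sing}_\bZ(M)=\Lambda(\bC)$ in degree $0$. The map $\rho$ is the augmentation $\bZ[\Lambda(\bC)]\to\Lambda(\bC)$, whose kernel is exactly the image of $p_1+p_2-m$ (generated by $[\lambda]+[\mu]-[\lambda+\mu]$), so $\rho$ is a quasi-isomorphism on $\tau_{\geq 0}$.

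\smallskip

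\emph{Case (ii), $\Lambda=0$.}  This is the key case. By the proof of Proposition \ref{wer}, $H^0(R^{Betti}_\bZ(\tilde M))$ is canonically identified with $H_1(G(\bC))$, via the boundary map $\alpha$ of the distinguished triangle associated to $\bZ_{tr}[G\times G]\to\bZ_{tr}[G]\to\tilde M$. On the other hand $C^{sing}_\bZ(M) = N\,Maps(\Delta^{\cdot},G(\bC))[-1]$, whose $H^0$ is $\pi_1(G(\bC))$ by the Dold-Kan correspondence applied to the simplicial abelian group $Maps(\Delta^{\cdot},G(\bC))$; via the lifting map (\ref{dkc}) this gets further identified with $\ker(\exp\colon\fg\to G(\bC))$ (Proposition \ref{opyat}). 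The map $\rho$ in degree $1$ sends a singular $1$-cycle $\gamma\colon\Delta^1\to G(\bC)$ with $\gamma(0)=\gamma(1)=0$ to the very same map viewed as a loop. Thus, on $H^0$, $\rho$ realizes the classical isomorphism $H_1(G(\bC))\iso\pi_1(G(\bC))$ (the inverse of the Hurewicz map, which is an isomorphism for any topological abelian group).

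\smallskip

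\emph{Case (iii), general.}  Any 1-motive $M$ fits into a short exact sequence $0\to M_1\to M\to M_2\to 0$ with $M_1$ of type (i) and $M_2$ of type (ii). The formation of $\tilde M$ preserves this exactness (as a sequence of complexes of presheaves $\tilde M_1\hookrightarrow\tilde M\twoheadrightarrow \tilde M_2$), and so does $C^{sing}_\bZ(-)$ by the exactness of $cone$. Applying $\tau_{\geq 0}$ preserves the distinguished triangles up to the usual shift (since all higher cohomology vanishes) and $\rho$ is functorial, so we obtain a morphism of distinguished triangles; the five-lemma completes the proof.

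\smallskip

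I expect Case (ii) to be the main obstacle: the two descriptions of $H^0$ pass through different categorical paths --- one through the Mayer--Vietoris-like triangle for the cube construction and the Roitman isomorphism of Lemma \ref{roitman}, the other through the simplicial-group model of $\pi_1$ --- and one must carefully check that $\rho$ intertwines them with the Hurewicz identification $H_1(G(\bC))\simeq\pi_1(G(\bC))$. Once this compatibility is verified at the level of an explicit representative $1$-cycle, the rest of the argument is a formal variation on the proof of Proposition \ref{wer}.
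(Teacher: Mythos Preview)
Your proof is correct and follows essentially the same approach as the paper: reduce via the extension $0\to[0\to G]\to M\to[\Lambda\to 0]\to 0$ to the two extreme cases, handle $G=0$ by the exactness of $\bZ[\Lambda\times\Lambda]\to\bZ[\Lambda]\to\Lambda$, and in the key case $\Lambda=0$ identify the induced map on $H^0$ with the inverse of the Hurewicz isomorphism $\pi_1(G(\bC))\iso H_1(G(\bC))$. One small slip in Case (iii): you have the types swapped --- the subobject $M_1=[0\to G]$ is of your type (ii) and the quotient $M_2=[\Lambda\to 0]$ is of your type (i), not the reverse.
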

\begin{cor} There is  functorial isomorphism
$$H^0(\rho) \circ H^0(\psi)^{-1}:  H^0(R_{\mathbb{Z}}^{Betti}\circ Tot_{\mathbb{Z}})(M) \iso H^0(C^{sing}_\bZ(M)). $$
\end{cor}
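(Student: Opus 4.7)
The plan is to mimic the proof of Proposition \ref{wer}: first reduce to showing that $H^0(\rho)$ is an isomorphism, then handle two special types of 1-motives, and finally treat a general $M$ by a 5-lemma argument. As a preliminary I would check that both $\tau_{\geq 0} C^{sing}_\bZ(\tilde M)$ and $C^{sing}_\bZ(M)$ are concentrated in cohomological degree $0$. For the source this is given by Proposition \ref{wer}, which shows $H^n(R^{Betti}_\bZ(\tilde M))=0$ for $n>0$. For the target, Proposition \ref{opyat} provides a quasi-isomorphism with $\operatorname{cone}(\Lambda(\bC) \oplus \fg \xrightarrow{u \oplus \exp} G(\bC))[-1]$, whose $H^1$ equals $\operatorname{coker}(u\oplus \exp)$; this vanishes because $\exp\colon \fg \to G(\bC)$ is already surjective (since $G$ is semi-abelian and hence $G(\bC)$ is connected). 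It therefore suffices to prove $H^0(\rho)$ is an isomorphism.

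For the two special cases, the argument closely parallels Proposition \ref{wer}. If $G=0$, the Betti realization of $\tilde M$ is the two-term complex $\bZ[\Lambda(\bC)^2] \to \bZ[\Lambda(\bC)]$ in degrees $-1,0$, so $\tau_{\geq 0} C^{sing}_\bZ(\tilde M) \simeq \Lambda(\bC)[0]$, matching $C^{sing}_\bZ(M)=\Lambda(\bC)[0]$ through the tautological quotient. If $\Lambda=0$, the calculation in the proof of Proposition \ref{wer} identifies $H^0(R^{Betti}_\bZ(\tilde M))$ with $H_1(G(\bC))$ via the connecting morphism of the triangle $\bZ_{tr}[G\times G][-1]\to \bZ_{tr}[G][-1]\to \tilde M \to \bZ_{tr}[G\times G]$, while Proposition \ref{opyat} gives $H^0(C^{sing}_\bZ(M)) = H^{-1}(N\,Maps(\Delta^\cdot, G(\bC))) = \pi_1(G(\bC))$. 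Under these identifications, $H^0(\rho)$ is induced by the degreewise sum $\bZ[Maps(\Delta^\cdot, G(\bC))] \to Maps(\Delta^\cdot, G(\bC))$, and this realizes the standard isomorphism $H_1(G(\bC)) \iso \pi_1(G(\bC))$ for the connected abelian complex Lie group $G(\bC)$.

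For a general $M=[\Lambda \xrightarrow{u} G]$, I would use the short exact sequence $0\to [0\to G] \to M \to [\Lambda \to 0] \to 0$ in $\cM_1$. As observed in the proof of \ref{wer}, the (nonadditive) construction $M\mapsto \tilde M$ sends this to a short exact sequence of complexes of presheaves, hence to a distinguished triangle $\tilde M_1 \to \tilde M \to \tilde M_2$. Applying $H^0\circ C^{sing}_\bZ$ yields a commutative diagram with exact rows in which the bottom row is a short exact sequence $\pi_1(G(\bC)) \hookrightarrow \ker(\Lambda(\bC) \oplus \fg \to G(\bC)) \twoheadrightarrow \Lambda(\bC)$. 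A diagram chase of the same flavor as the one at the end of the proof of \ref{wer} then reduces the general case to the two special cases already handled.

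The step I expect to be the main obstacle is the $\Lambda=0$ case: one must match the identification $H^0(R^{Betti}_\bZ(\tilde M)) \cong H_1(G(\bC))$ coming from the connecting morphism of the triangle above with the simplicial identification $H^{-1}(N\,Maps(\Delta^\cdot,G(\bC))) = \pi_1(G(\bC))$, and verify that the chain-level quotient $\bZ[Maps(\Delta^\cdot,G(\bC))]\to Maps(\Delta^\cdot,G(\bC))$ defining $\rho$ really realizes the canonical comparison between singular homology and homotopy of the topological abelian group $G(\bC)$. Once this chain-level bookkeeping is in hand, the remaining pieces of the argument are formal.
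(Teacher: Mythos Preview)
Your proposal is correct and follows essentially the same route as the paper. The corollary is an immediate consequence of Propositions \ref{wer} and \ref{pr39}; what you outline is precisely the paper's proof of Proposition \ref{pr39}: reduce to $H^0$ using \ref{wer} and \ref{opyat}, treat the cases $G=0$ and $\Lambda=0$ separately, and use exactness of both $H^0$-functors to handle the general case. Your anticipated ``main obstacle'' in the $\Lambda=0$ case is exactly what the paper isolates as a separate lemma, observing that the map $H_1(G(\bC))\to \pi_1(G(\bC))$ induced by $C^{sing}_\bZ(G(\bC))\to N\,Maps(\Delta^\cdot,G(\bC))$ is the left inverse to the Hurewicz isomorphism.
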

\begin{proof}[Proof of \ref{pr39}] We know from  Propositions
\ref{wer} and \ref{opyat} that the complexes $\tau_{\geq 0}C^{sing}_\bZ(\tilde M)$, $C^{sing}_\bZ(M)$ have nontrivial cohomology only in degree $0$ and that the functors
$$M\mapsto  H^0(C^{sing}_\bZ(\tilde M))$$
$$ M\mapsto H^0(C^{sing}_\bZ(M)) $$
  from the category of 1-motives to the category of abelian groups are exact.
  Thus, it suffices to check the proposition for 
1-motives of the form
$[\Lambda \rar{} 0]$ and for 1-motives of the form  $[0\rar{} G]$. In the first case the statement is trivial. If $M= [0\rar{} G]$
then by (\ref{kazh}) the map $\bZ_{tr}[G][-1] \to  \tilde M$ induces an isomorphism $H_1(G(\mathbb{C}))\iso H^0(C^{sing}_\bZ(\tilde M))$. Hence the following lemma completes the proof.
\end{proof}
\begin{lm} The morphism $C^{sing}_\bZ(G(\mathbb{C}))\to N\,
Maps(\Delta^{\cdot}, G(\mathbb{C}))$ induces a quasi-isomorphism
$$
H_1(G(\mathbb{C}))[1] \simeq N\, Maps(\Delta^{\cdot}, G(\mathbb{C})).
$$
\end{lm}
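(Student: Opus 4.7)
The plan is to establish two facts: first, that the cohomology of $N\,Maps(\Delta^{\cdot}, G(\mathbb{C}))$ is concentrated in degree $-1$ and canonically isomorphic to $\pi_1(G(\mathbb{C}))$; second, that the map induced on this degree $-1$ cohomology by the morphism in the lemma coincides with the Hurewicz homomorphism $H_1(G(\mathbb{C}),\mathbb{Z})\to \pi_1(G(\mathbb{C}))$, which in our setting is an isomorphism. Combining these gives the claimed quasi-isomorphism.

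First, I would observe that $Maps(\Delta^{\cdot}_{\text{top}}, G(\mathbb{C}))$ is the singular simplicial set of $G(\mathbb{C})$, endowed with the additional structure of simplicial abelian group coming from pointwise addition in the topological abelian group $G(\mathbb{C})$. By the Dold--Kan / normalization theorem, for any simplicial abelian group $A_\cdot$ one has $H_n(NA_\cdot) = \pi_n(A_\cdot)$, and these simplicial homotopy groups coincide with the topological homotopy groups of the geometric realization $|A_\cdot|$. Applied to our situation, this gives $H_n(N\,Maps(\Delta^{\cdot}, G(\mathbb{C}))) \simeq \pi_n(G(\mathbb{C}))$.

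Next, I would invoke the structure of $G$. Since $G$ is semi-abelian, $G(\mathbb{C})$ is a connected abelian complex Lie group; as a real Lie group it is therefore isomorphic to $\mathbb{R}^a \times (S^1)^b$ for some integers $a,b\geq 0$. Consequently, $\pi_n(G(\mathbb{C}))=0$ for $n\geq 2$ and $\pi_1(G(\mathbb{C})) \cong \mathbb{Z}^b$ is torsion-free. It follows that $N\,Maps(\Delta^{\cdot}, G(\mathbb{C}))$ is quasi-isomorphic to $\pi_1(G(\mathbb{C}))$ placed in degree $-1$.

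Finally, I would identify the map. The morphism $C^{sing}_\bZ(G(\mathbb{C}))\to N\,Maps(\Delta^{\cdot}, G(\mathbb{C}))$ is obtained by normalizing the counit of linearization $\mathbb{Z}[Maps(\Delta^{\cdot}, G(\mathbb{C}))] \to Maps(\Delta^{\cdot}, G(\mathbb{C}))$ (the latter viewed as a simplicial abelian group via the structure on $G(\mathbb{C})$). Unwinding the definitions, one checks that on degree $-1$ cohomology this map is the classical Hurewicz homomorphism $H_1(G(\mathbb{C}),\mathbb{Z}) \to \pi_1(G(\mathbb{C}))$, which for a path-connected $H$-space is an isomorphism (the fundamental group being already abelian). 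Picking a subcomplex $H_1(G(\mathbb{C}))[1] \hookrightarrow C^{sing}_\bZ(G(\mathbb{C}))$ (realizing $H_1$ as $1$-cycles modulo boundaries, using that $H_1(G(\mathbb{C}))$ is free) and composing yields the desired quasi-isomorphism. The only slightly delicate point, and the one I would treat most carefully, is verifying that the normalization of the linearization counit indeed gives the Hurewicz map on the nontrivial cohomology group; this however is a standard unraveling of the Dold--Kan normalization.
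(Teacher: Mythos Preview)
Your argument is correct and follows essentially the same route as the paper: identify the cohomology of $N\,Maps(\Delta^{\cdot}, G(\mathbb{C}))$ with $\pi_1(G(\mathbb{C}))[1]$ via Dold--Kan, and then recognize the induced map on degree $-1$ cohomology as (the inverse of) the Hurewicz isomorphism. One terminological correction: the classical Hurewicz homomorphism runs $\pi_1 \to H_1$, not $H_1 \to \pi_1$; the paper accordingly describes the map in question as the \emph{left inverse} to the Hurewicz isomorphism, which is the precise statement you should use.
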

\begin{proof} The map
$$H_1(G(\mathbb{C}))[1] \to N\, Maps(\Delta^{\cdot},
G(\mathbb{C}))\simeq \pi_1(G(\mathbb{C}))[1]$$ is the left inverse
to the Hurewicz isomorphism $\pi_1(G(\mathbb{C}))[1]\to
H_1(G(\mathbb{C}))[1]. $
\end{proof}

We define $\Theta_M$ to be the composition of $3$ isomorphisms
\begin{equation}\label{eqkey6}
H^0(R_{\mathbb{Z}}^{Betti} \circ Tot_{\mathbb{Z}}(M))  \rar{H^0(\psi)^{-1}} H^0(R_{\mathbb{Z}}^{Betti}( \tilde M))\rar{ H^0(\rho)} H^0(C^{sing}_\bZ(M))
\end{equation}
$$\rar{H^0(\phi)} H^0(\overline {Tot}_\bZ \circ T^{Betti}_\bZ (M)).$$
\subsection{Remark:  $Tot_\bQ$ via additivization.}\label{e.m.}  
The complex defining $\tilde M$ is a truncation of  the Eilenberg-MacLane cube construction.  In fact, one can use the whole  Eilenberg-MacLane
 complex to give a short conceptual proof of (\ref{mainequation}) with rational coefficients. 
 
   Let $Fct(Mod(\bZ), Mod(\bZ))$ be the category of all (not necessary additive) functors $F:Mod(\bZ)\to Mod(\bZ)$. Recall from (\cite{lp}, \S 13.2.2; 13.2.6) that  the Eilenberg-MacLane cube complex 
  $$\cdots \to Q^\bZ_{2} \to Q^\bZ_{1}\to Q^\bZ_{0}$$ 
 is a complex over $Fct(Mod(\bZ), Mod(\bZ))$ whose terms are functors of the form $F(V)= \bZ[V^{2^n}]$, where $\bZ[V^{2^n}]$ denotes the free abelian group generated by the set $V^{2^n}$, and whose $i$-th homology functor $H_i(  Q^\bZ_{\cdot}) $ is isomorphic to the stable homology
of the Eilenberg-MacLane spaces:
\begin{equation}\label{stable}
    H_i(  Q^\bZ_{\cdot})(V)= H_{i+n}(K(V, n)), \quad n\geq i+1.
   \end{equation}  
 In particular,  for positive $i$,  $H_i(  Q^\bZ_{\cdot})(V)$ is a  torsion group and $H_0(  Q^\bZ_{\cdot})(V)$ is canonically isomorphic to $V$.  It follows that the complex  $ Q^\bQ_{\cdot}=  Q^\bZ_{\cdot} \otimes \bQ$ is a resolution of the functor $F(V)=V\otimes \bQ$.
 Let $[\Lambda \rar{u} G] \in \cM_1 \subset  C^b(  \cM_1)$ be a 1-motive.  The complex of representable presheaves $Q^\bQ(M)$
 $$Q^\bQ(M)(X):= cone(Q^\bQ_{\cdot}(\underline \Lambda(X)) \stackrel{u}{\longrightarrow}Q^\bQ_{\cdot}(\underline G(X)))[-1], \quad  X\in Sm$$
 is a resolution of the complex $cone(\underline \Lambda \stackrel{u}{\longrightarrow} \underline G)[-1] \otimes \bQ$.
 Consider the sheafification  $Q^{\bQ, h}(M)$ of $Q^\bQ(M)$ for the h-topology on $Sm$. The sheafification of a representable presheaf $\bQ[Y]$ is canonically isomorphic to  $\bQ_{tr}[Y]$ (\cite{v2}, Theorem 3.3.5, Proposition 3.3.6); $\underline \Lambda$, $ \underline G$ are already sheaves (\cite{v2}, Theorem 3.2.9).
 Thus, by Proposition \ref{htopologyandtransfers} 
$$ Q^{\bQ, h}(M)\simeq  Tot_\bQ(M)$$
  in $DM^{\eff}(\bC, \mathbb{Q})$. On the other hand, as every term of  $Q^{\bQ}(M)$ is a representable presheaf we can compute $R_{\mathbb{Q}}^{Betti} \circ  Q^{\bQ, h}(M)$ by applying $C^{sing}_\bQ$ to $Q^{\bQ}(M)$ termwise. The resulted complex is isomorphic to the total complex of the simplicial complex
   $$ cone(Q^\bQ_{\cdot}( Maps(\Delta^{\cdot}, \Lambda(\bC))) \to  Q^\bQ_{\cdot}( Maps(\Delta^{\cdot}, G(\mathbb{C}) )))[-1] $$
  Applying (\ref{stable}) again we find that  $R_{\mathbb{Q}}^{Betti} \circ  Q^{\bQ, h}(M)$ is canonically quasi-isomorphic to  $ C^{sing}_\bQ(M)$. Summarizing,  we find
  $$R_{\mathbb{Q}}^{Betti} \circ Tot_\bQ(M) \simeq C^{sing}_\bQ(M).$$
  The above argument expresses the idea that the functor $Tot_\bQ$ is the additivization of the functor
that takes a 1-motive $M$
to the motive $cone(\bQ_{tr}[\Lambda] \to \bQ_{tr}[G])[-1]$. We refer the reader to (\cite{k}) for the definition of additivization.

 \subsection{Computation of $H^0(R_{\mathbb{Z}}^{Hodge} \circ Tot_{\mathbb{Z}}(M)) $.}\label{c.h.r.}  
  The abelian groups $H^0(R_{\mathbb{Z}}^{Betti} \circ Tot_{\mathbb{Z}}(M)) $,   $H^0(\overline {Tot}_\bZ \circ T^{Betti}_\bZ(M)) $  underlie the Hodge
structures $H^0(R_{\mathbb{Z}}^{Hodge} \circ Tot_{\mathbb{Z}}(M)) $,   $H^0(\overline {Tot}_\bZ \circ T^{Hodge}_\bZ(M)) $ respectfully. 
Let us show that the isomorphism
(\ref{mains}) is compatible with the Hodge structures.  It will suffice to check that   $\Theta_{M}^{-1}$ preserves the
weight and Hodge filtrations. 

The weight filtration on
$H^0(\overline {Tot}_\bQ \circ T^{Hodge}_\bZ(M)) $ is induced by a filtration on $M=[\Lambda \rar{u} G]$ by submotives:
if $G$ is an extension of an abelian variety $A$ by a torus $T$,
$W_{-2}M = (0, T) \subset W_{-1}M = (0, G) \subset W_0 M= M$. 
The Hodge structure $H^0(R^{Hodge}_\bZ (\underline T)[-1])$ is
pure of weight $-2$ and the Hodge structure $H^0(R^{Hodge}_\bZ
(\underline G)[-1])$ has weights $-2, -1$\footnote{For the morphism
$\mathbb{Z}_{tr}[G] \to \underline G$ induces a quasi-isomorphism
$H_1(G(\mathbb{C}))[1] \simeq R^{Hodge}_\bZ (\underline G)$ (see
(\ref{keycomp})).}. 
Thus,  the functoriality of isomorphism  (\ref{mains})  implies that  $\Theta_{M}^{-1}$ preserves the weight filtration.  

 Let us show that $\Theta_{M}^{-1}$
preserves the Hodge filtration. A choice of a basis for $\Lambda(\bC)$,
$\Lambda = \mathbb{Z}[S]$,  yields a lifting
\begin{equation}\label{liftu}
\def\normalbaselines{\baselineskip20pt
\lineskip3pt  \lineskiplimit3pt}
\def\mapright#1{\smash{
\mathop{\to}\limits^{#1}}}
\def\mapdown#1{\Big\downarrow\rlap
{$\vcenter{\hbox{$\scriptstyle#1$}}$}}
\begin{matrix}
\underline \Lambda  &    \mapright{\tilde u= u\circ \delta} &
\mathbb{Z}_{tr}[G]  \cr
 \mapdown{\delta } & &\mapdown{Id} \cr
 \mathbb{Z}_{tr}[\Lambda] &    \mapright{u} & \mathbb{Z}_{tr}[G] .
\end{matrix}
\end{equation}
 where the homomorphism $\delta:  \underline \Lambda =
\underline  {\mathbb{Z}[S]} \to \underline {\mathbb{Z}[\Lambda]}=  \mathbb{Z}_{tr}[\Lambda]$ is defined on generators $s\in
S\subset \Lambda$ by
 the formula
\begin{equation}\label{liftuu}
 \delta(s)= [s]-[0]\in  \mathbb{Z}[\Lambda].
\end{equation}
Set $M'= cone(\tilde u)[-1]$. The diagram (\ref{liftu}) yields a
morphism $M' \to \tilde M$.
\begin{lm} The morphisms $M' \to \tilde M \to Tot_{\mathbb{Z}}(M)$ induce
$$H^0(R_{\mathbb{Z}}^{Betti}(M'))\stackrel{\alpha}{\simeq} H^0(R_{\mathbb{Z}}^{Betti}(\tilde M))\stackrel{\beta}{\simeq}
H^0(R_{\mathbb{Z}}^{Betti}\circ Tot_{\mathbb{Z}}(M)),$$
$$H^0(R^{Hodge}_\bZ(M'))\stackrel{\beta \alpha}{\simeq}
H^0(R^{Hodge}_\bZ\circ Tot_\bZ(M)).$$
\end{lm}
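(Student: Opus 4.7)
The lemma has a Betti part (the two isomorphisms $\alpha, \beta$) and a Hodge refinement of the composite $\beta\alpha$. Since $\beta$ is essentially $H^0(\psi)$ for the quasi-isomorphism $\psi$ of Proposition \ref{wer}, the essential content is to establish the Betti isomorphism $\alpha$ and then to lift the composite to mixed Hodge structures. My plan is to compute $H^0$ on both sides as the same canonical extension of $\Lambda(\bC)$ by $H_1(G(\bC))$ and to invoke the five lemma, uniformly in the Betti and Hodge settings.

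For the Betti case, I start from the distinguished triangle $M' \to \underline\Lambda \xrightarrow{\tilde u} \bZ_{tr}[G]$ (since $M'[1] = cone(\tilde u)$), apply $R^{Betti}_{\bZ}$, and take the long exact sequence. Using that $R^{Betti}_{\bZ}(\underline\Lambda)$ is $\Lambda(\bC)$ concentrated in degree $0$ and that $H^{-i}(R^{Betti}_{\bZ}(\bZ_{tr}[G])) = H_i(G(\bC))$, the crucial input is that the connecting map $\Lambda(\bC) \to H_0(G(\bC)) = \bZ$ vanishes, which follows because $\tilde u(s) = [s] - [0]$ lies in the augmentation kernel. This yields
$$0 \to H_1(G(\bC)) \to H^0(R^{Betti}_{\bZ}(M')) \to \Lambda(\bC) \to 0.$$
The parallel computation for $Tot_{\bZ}(M) \to \underline\Lambda \xrightarrow{u} \underline G$, using Lemma \ref{roitman} to collapse $R^{Betti}_{\bZ}(\underline G)$ to $H_1(G(\bC))[1]$, produces the identical short exact sequence for $H^0(R^{Betti}_{\bZ}(Tot_{\bZ}(M)))$. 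The composite $\beta\alpha$ induces a morphism between these two extensions that is the identity on both outer terms (the left identification is precisely Lemma \ref{roitman}); the five lemma then gives the Betti isomorphism of the composite, and two-out-of-three with $\beta$ (already an isomorphism by Proposition \ref{wer}) yields $\alpha$.

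For the Hodge refinement, I rerun the same argument in $D^b_{dg}(MHS^{\bZ}_{\eff})$ via the DG quasi-functor $R^{Hodge}_{\bZ}$. The needed Hodge-theoretic facts are that $R^{Hodge}_{\bZ}(\underline\Lambda)$ is $\Lambda(\bC)$ in degree $0$ as a pure Hodge structure of type $(0,0)$, and that $R^{Hodge}_{\bZ}(\underline G)$ is concentrated in cohomological degree $-1$ with value $H_1(G(\bC))$ carrying Deligne's canonical mixed Hodge structure of weights $\{-1,-2\}$, with the natural map from $R^{Hodge}_{\bZ}(\bZ_{tr}[G])$ an isomorphism there. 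Both assertions reduce to their Betti counterparts via conservativity and faithfulness of the forgetful functor $MHS^{\bZ} \to Mod(\bZ)$ on polarizable Hodge structures: vanishing or isomorphy of morphisms of such Hodge structures can be checked on the underlying abelian groups. The five lemma in the abelian category $MHS^{\bZ}_{\eff}$ then completes the argument.

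The principal technical point I anticipate is this Hodge upgrade of Lemma \ref{roitman}: confirming that the Hodge structure on $H^{-1}(R^{Hodge}_{\bZ}(\underline G))$ produced by Beilinson's $\tilde p$-Hodge complex construction is indeed Deligne's canonical one on $H_1(G(\bC))$, rather than some a priori different structure. The conservativity argument makes the verification formal, but one does need to trace through the construction of $R^{Hodge}_{\bZ}$ rather than treating it as a black box.
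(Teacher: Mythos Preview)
Your proposal is correct and follows essentially the same route as the paper: use the distinguished triangles for $M'$ and $Tot_\bZ(M)$ with common middle term $\underline\Lambda$, observe that the connecting map $\Lambda(\bC)\to H_0(G(\bC))=\bZ$ vanishes because $\tilde u(s)=[s]-[0]$ lies in the augmentation ideal, and conclude by a five-lemma/diagram chase using Lemma~\ref{roitman}. The paper packages this as applying $R^{Betti}_\bZ$ directly to the morphism of triangles $(\bZ_{tr}[G][-1]\to M'\to\underline\Lambda)\to(\underline G[-1]\to Tot_\bZ(M)\to\underline\Lambda)$, which is the same computation.

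One remark: your final paragraph overstates the work needed for the Hodge refinement. You do not need to identify the Hodge structure on $H^{-1}(R^{Hodge}_\bZ(\underline G))$ with Deligne's canonical one; the lemma only asserts that $\beta\alpha$ is an isomorphism of whatever Hodge structures the functor $R^{Hodge}_\bZ$ produces. Since $\beta\alpha$ arises as $H^0$ of a morphism in $D^b(MHS^\bZ_{\eff})$ and is a Betti isomorphism, conservativity of the forgetful functor (which you already invoke) finishes the job immediately---no tracing through Beilinson's construction is required.
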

\begin{proof}
We have already proved in Proposition \ref{wer} that $\beta$ is an
isomorphism. Thus, it is enough to show that the composition
$\beta\alpha$ has the same property. Let us apply the functor
$R_{\mathbb{Z}}^{Betti}$ to the morphism of exact triangles
$$
\def\normalbaselines{\baselineskip20pt
\lineskip3pt  \lineskiplimit3pt}
\def\mapright#1{\smash{
\mathop{\to}\limits^{#1}}}
\def\mapdown#1{\Big\downarrow\rlap
{$\vcenter{\hbox{$\scriptstyle#1$}}$}}
\begin{matrix}
 \mathbb{Z}_{tr}[G][-1] &   \to &  M' & \to & \underline \Lambda  &
 \mapright{\tilde u} &
\mathbb{Z}_{tr}[G] \cr \mapdown{ } & &\mapdown{} && \mapdown{} &&
\mapdown{} \cr \underline G [-1]& \to &  Tot_{\mathbb{Z}}(M
)&\to & \underline \Lambda & \mapright{} & \underline G
\end{matrix}
$$
The key observation is that the connecting homomorphism
$$\tilde u_* :\mathbb{Z}[S(\bC)]=H^0(R_{\mathbb{Z}}^{Betti}( \underline \Lambda )) \to
H^0(R_{\mathbb{Z}}^{Betti}(\mathbb{Z}_{tr}[G]))=\mathbb{Z}$$ is $0$\footnote{This explains the appearance of constant term $-[0]$ in
(\ref{liftuu}).}. The rest of the proof is an easy diagram chase
combined with the formula (\ref{keycomp}).
\end{proof}
By the lemma is suffices to check that the isomorphism $$ \Theta^{\prime}=
( \Theta_{M}\circ \beta\circ  \alpha) \otimes Id: \,
H^0(R_{\mathbb{Z}}^{Betti} (M')) \otimes \bC \iso T^{Betti}_{\mathbb{Z}}(M)\otimes \bC$$
is strictly compatible with the Hodge filtration. To do this we
construct a smooth $G$-equivariant compactification $\overline G$ of $G$.
Recall that $G$ is an extension of an abelian variety $A$ by an
algebraic torus $T$. Choose an isomorphism $T\simeq {\mathbb
G}_m^r$. Then $G$, viewed as a principal $T$-bundle over $A$, gives
rise to a vector bundle $E\to A$ (which is a direct sum of $r$ line
bundles). Define
$$\overline G:= \mathbb{P}(E\oplus {\bf 1}).$$ 
The complement $\overline G - G$ is a normal crossing divisor on $\overline G$. Thus, we can view $M'$ as a complex over $\bZ[\overline {Sm}]$ (\S \ref{comp}).
The (cohomological) de Rham complex $R^*_{DR}(M')$ is identified with
$$cone(R\Gamma(\overline G, \Omega^{\cdot}_{log}) \to
 \mathbb{C}[S])$$ and the Hodge filtration on $R^*_{DR}(M')$ is
 induced by the stupid filtration on the logarithmic de Rham complex
$\Omega^{\cdot}_{log}$. In particular, $F^1H^0 \, R^*_{DR}(M')$ is
identified with the space $\Gamma(\overline G, \Omega^{1}_{log})$ of
global 1-forms on $\overline G$ with logarithmic singularities
(\cite{d2}, Theorem 3.2.5).

The translation action of $G$ on itself extends to a
$G$-action on $\overline G$.  Every global form $\omega \in
\Gamma(\overline G, \Omega^{\cdot}_{log})$ is $G$-invariant (for $G$
acts trivially on its de Rham cohomology). Conversely, every
invariant 1-form on $G$ extends to a 1-form on $\overline G$ with
logarithmic singularities (for $\dim G = \dim
F^1H_{DR}^1(G(\mathbb{C}))$). Summarizing, we get an identification
$\gamma$  of $ F^1H^0( R^*_{DR}(M'))$ with the cotangent space
$\fg^*$. The canonical pairing
$$ker\,
(\Lambda \oplus \fg \stackrel{u\oplus exp}{\longrightarrow}
G(\mathbb{C}))\otimes \fg^* \stackrel{\Theta^{\prime -1}
\otimes \gamma^{-1} }{\longrightarrow}
H^0(R_{\mathbb{Z}}^{Betti}(M'))\otimes F^1(H^0 \, R^*_{DR}(M'))\to
\mathbb{C} $$ is given by the formula
$$(\lambda \oplus \theta)\otimes \eta \, \longrightarrow \,
<\theta, \eta> . $$ It follows that, the Hodge filtration
$F^{-1}\subset H^0(R_{\mathbb{C}}^{Betti} (M'))$ is carried over by
$\Theta^{\prime}$  to the kernel of the projection
$$ker\,
(\Lambda \oplus \fg \stackrel{u\oplus exp}{\longrightarrow}
G(\mathbb{C}))\otimes _{\mathbb{Z}} \mathbb{C} \to \fg.$$

This completes the proof of (\ref{hodgec1}). 

\subsection{Proof of formula (\ref{hodgec2}).}  Set 
$$d_{\leq 1} = \Ho(Tot_\bQ \circ LAlb_\bQ): DM^{\eff}_{\gm}(\mathbb{C}, \mathbb{Q}) \to  d_{\leq 1} DM^{\eff}_{\gm}(\mathbb{C}, \mathbb{Q})\subset DM^{\eff}_{\gm}(\mathbb{C}, \mathbb{Q}),$$
$$\overline d_{\leq 1}= Ho(\overline{Tot}_\bQ \circ \overline{LAlb_\bQ} ): D^b(MHS_{\eff}^\bQ) \to \overline d_{\leq 1} D^b(MHS_{\eff}^\bQ) \subset D^b(MHS_{\eff}^\bQ), $$
$$R^{Hodge}_{\bQ, tr}= \Ho(R^{Hodge}_\bQ): DM^{\eff}_{\gm}(\mathbb{C}, \mathbb{Q})\to D^b(MHS_{\eff}^\bQ),$$
where $ d_{\leq 1} DM^{\eff}_{\gm}(\mathbb{C}, \mathbb{Q})$ (resp.  $\overline d_{\leq 1} D^b(MHS_{\eff}^\bQ)$) is the smallest strictly full subcategory of  $DM^{\eff}_{\gm}(\mathbb{C}, \mathbb{Q})$ (resp. $D^b(MHS_{\eff}^\bQ)$) that contains the image of $Tot_\bQ$ (resp.  $\overline {Tot}_\bQ$). The functor $d_{\leq 1}$  (resp. $\overline d_{\leq 1}$) is left adjoint
to $  d_{\leq 1}  DM^{\eff}_{\gm}(\mathbb{C}, \mathbb{Q}) \mono DM^{\eff}_{\gm}(\mathbb{C}, \mathbb{Q})$ (resp.  $\overline d_{\leq 1} D^b(MHS_{\eff}^\bQ) \mono D^b(MHS_{\eff}^\bQ)$).
We have to show that for every $M\in DM^{\eff}_{\gm}(\mathbb{C}, \mathbb{Q})$ the morphism
\begin{equation}\label{adjalb}
\overline d_{\leq 1}  \circ R^{Hodge}_{\bQ, tr}(M) \rar{\alpha} R^{Hodge}_{\bQ, tr} \circ d_{\leq 1}(M)
\end{equation}
  is an isomorphism.  In fact, this assertion is already proven in (\cite{bk}, Theorem 17.3.1);  the proof below is a variant of the argument given in (\cite{bk}).
  
   It suffices to show that (\ref{adjalb}) is an isomorphism in the case when $M$  is the motive of a smooth 
connected projective variety. Let $X$ be such a variety. Recall from (\cite{bk}) the
structure of the Albanese motive $ d_{\leq 1}(\mathbb{Q}_{tr}[X])$.
 Let $ X \to {\cA}_X$ be
the canonical morphism from $X$ to the extended Albanese scheme of
$X$ (\cite{r1}, Section 1). ${\cA}_X$ is a group scheme fitting into the following exact sequence 
 $$0\to {\cA}^0_X \to {\cA}_X \to \bZ \to 0,$$
where ${\cA}^0_X$ is Serre's Albanese abelian variety and $\bZ$ is viewed as the discrete group scheme over $\bC$. 
The sheaf with transfers $\underline {\cA}_X$ represented by ${\cA}_X$ defines an object of the Voevodsky
category $DM^{\eff}_{\gm, \et}(\mathbb{C}, \mathbb{Z})$.  We shall write   $\underline {\cA}_X\otimes \bQ$ for its image in  $DM^{\eff}_{\gm}(\mathbb{C}, \mathbb{Q})$.
It is clear that  $\underline {\cA}_X\otimes \bQ \subset d_{\leq 1} DM^{\eff}_{\gm}(\mathbb{C}, \mathbb{Q})$.
Consider the exact triangles 
$$P\to \mathbb{Q}_{tr}[X] \rar{u}  \underline {\cA}_X \otimes \mathbb{Q} \to P[1]$$
$$d_{\leq 1} (P) \rar{v} d_{\leq 1} (\mathbb{Q}_{tr}[X])  \rar{}  \underline {\cA}_X \otimes \mathbb{Q} \to d_{\leq 1} (P)[1],$$
where $u$ is defined as the composition $\mathbb{Q}_{tr}[X]  \to \mathbb{Q}_{tr}[{\cA}_X ] \to \underline {\cA}_X \otimes \mathbb{Q}$.
According to (\cite{bk}, Theorem 10.3.2)  the second triangle yields a commutative diagram  
$$
\def\normalbaselines{\baselineskip20pt
\lineskip3pt  \lineskiplimit3pt}
\def\mapright#1{\smash{
\mathop{\to}\limits^{#1}}}
\def\mapdown#1{\Big\downarrow\rlap
{$\vcenter{\hbox{$\scriptstyle#1$}}$}}
\begin{matrix}
    &&Hom(d_{\leq 1}(\mathbb{Q}_{tr}[X]) , \bQ(1)[2])  & \rar{v^*} & Hom(d_{\leq 1}(P), \bQ(1)[2])  \cr
  &&\mapdown{\simeq}  &  &  \mapdown{\simeq} \cr
Pic(X)& \simeq &   Hom(\mathbb{Q}_{tr}[X] , \bQ(1)[2])     & \to & NS_X \otimes \bQ,
\end{matrix}
$$
where $NS_X$ is the N\'eron-Severi group of $X$ and the map at the bottom line equals the canonical projection $Pic(X)\to NS_X$.  Moreover, the induced morphism  
  $$d_{\leq 1}(P)\to Hom(NS_X, \bQ)(1)[2]$$
  is an isomorphism.

It is enough to prove that (\ref{adjalb}) is an isomorphism for $M=P$.  By Lemma \ref{roitman} $H_iR^{Hodge}_{\bQ, tr}(P)=0$ for  $i=0, 1$ and  
$H_iR^{Hodge}_{\bQ, tr}(P)\iso H_iR^{Hodge}_{\bQ, tr}(\mathbb{Q}_{tr}[X])$ for $i>1$. It follows, that $\overline d_{\leq 1}  \circ R^{Hodge}_{\bQ, tr}(P)= \overline d_{\leq 1} (H_2(X))[2]$ is of type $(-1,-1)$. Thus, we only need to show that morphism $\alpha^*$ in the commutative diagram below is an isomorphism.
$$
\def\normalbaselines{\baselineskip20pt
\lineskip3pt  \lineskiplimit3pt}
\def\mapright#1{\smash{
\mathop{\to}\limits^{#1}}}
\def\mapdown#1{\Big\downarrow\rlap
{$\vcenter{\hbox{$\scriptstyle#1$}}$}}
\begin{matrix}
    Hom(R^{Hodge}_{\bQ, tr} \circ d_{\leq 1}(P) , \bQ(1)[2])  &  \rar{\alpha^*} & Hom(\overline d_{\leq 1}  \circ R^{Hodge}_{\bQ, tr}(P), \bQ(1)[2])  \cr
  \mapdown{\simeq}  &  &  \mapdown{\simeq} \cr
   Hom(P , \bQ(1)[2])     & \rar{R^{Hodge}_{\bQ, tr}} &   Hom(R^{Hodge}_{\bQ, tr}(P), \bQ(1)[2])  \cr
    \mapdown{\simeq}  &  &  \mapdown{\simeq} \cr
   NS_X \otimes \bQ     & \rar{\overline c_1} & Hom_{MHS^\bQ}(H_2(X), \bQ(1)),
   \end{matrix}
   $$
The next lemma shows that map $$\overline c_1: NS_X \otimes \bQ  = Pic(X)/Pic^0(X)  \otimes \bQ   \to Hom_{MHS^\bQ}(H_2(X), \bQ(1))$$ is induced by the first Chern class. Therefore,  by the Lefschetz $(1,1)$
Theorem, $\overline c_1$ and $\alpha^*$ are isomorphisms.
\begin{lm} The diagram below is commutative.
$$
\def\normalbaselines{\baselineskip20pt
\lineskip3pt  \lineskiplimit3pt}
\def\mapright#1{\smash{
\mathop{\to}\limits^{#1}}}
\def\mapdown#1{\Big\downarrow\rlap
{$\vcenter{\hbox{$\scriptstyle#1$}}$}}
\begin{matrix}
 Hom(\mathbb{Q}_{tr}[X] , \bQ(1)[2])     & \rar{R^{Hodge}_{\bQ, tr}} &   Hom (R^{Hodge}_{\bQ, tr}(\mathbb{Q}_{tr}[X]), \bQ(1)[2])  \cr
    \mapdown{\simeq}  &  &  \mapdown{} \cr
   Pic(X)  \otimes \bQ     & \rar{c_1} & Hom_{MHS^\bQ}(H_2(X), \bQ(1)),
   \end{matrix}
   $$
Here $c_1$ denotes the first Chern class map.
\end{lm}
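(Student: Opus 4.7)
The plan is to reformulate both arrows, then reduce to the universal line bundle, and finally verify the statement on $\bP^1$. First, I would spell out each entry of the square. The top row of the diagram is, by definition, the map induced by applying the functor $R^{Hodge}_{\bQ, tr}$ to a morphism in $DM^{\eff}_{\gm}(\bC; \bQ)$. The left vertical isomorphism is the standard identification of motivic cohomology in weight one: it sends a line bundle $L$ to the motivic first Chern class $c_1^{\mathrm{mot}}(L)\in Hom(\bQ_{tr}[X],\bQ(1)[2])$. For the right-hand side, since $X$ is smooth projective, $R^{Hodge}_{\bQ, tr}(\bQ_{tr}[X])$ has pure homology of weight $-i$ in degree $i$, so $Hom(R^{Hodge}_{\bQ, tr}(\bQ_{tr}[X]),\bQ(1)[2])$ reduces to $Hom_{MHS^\bQ}(H_2(X,\bQ),\bQ(1))$ (higher Ext groups between pure polarizable structures of the same weight vanish). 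Under these identifications, the required commutativity becomes the equality
$$R^{Hodge}_{\bQ, tr}\bigl(c_1^{\mathrm{mot}}(L)\bigr)=c_1(L)\quad\text{in } Hom_{MHS^\bQ}(H_2(X,\bQ),\bQ(1)).$$

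Next I would exploit the naturality of both sides. Every $L\in Pic(X)\otimes \bQ$ is a $\bQ$-linear combination of pullbacks $f^*\mathcal O(1)$ along morphisms $f:X\to \bP^N$, since every line bundle becomes the difference of two very ample ones. Both the motivic first Chern class and the topological first Chern class are functorial and additive, and the Hodge realization $R^{Hodge}_{\bQ, tr}$ is a functor, so by naturality the statement reduces to the universal case $X=\bP^N$, $L=\mathcal O(1)$. A further restriction to a line $\bP^1\hookrightarrow \bP^N$ reduces to $X=\bP^1$, $L=\mathcal O(1)$, where both target Hom groups are one-dimensional $\bQ$-vector spaces.

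Finally I would carry out the computation on $\bP^1$. Decompose the motive $\bQ_{tr}[\bP^1]\simeq \bQ\oplus \bQ(1)[2]$ (coming from the standard open-closed decomposition $\bP^1=\bA^1\cup \{\infty\}$, which identifies $\bQ(1)[2]$ with the reduced motive of $\bP^1$ pointed at $\infty$). Under this decomposition, $c_1^{\mathrm{mot}}(\mathcal O(1))$ is the projection onto $\bQ(1)[2]$ followed by the identity. On the topological side, $H_2(\bP^1(\bC),\bQ)$ is the Tate Hodge structure $\bQ(1)$, and $c_1(\mathcal O(1))$ is the identity in $Hom_{MHS^\bQ}(\bQ(1),\bQ(1))=\bQ$. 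The construction of $R^{Hodge}_{\bQ, tr}$ (via Beilinson's $\tilde p$-Hodge complexes attached to compactifications) sends the motivic summand $\bQ(1)[2]\subset \bQ_{tr}[\bP^1]$ exactly to the Tate summand $H_2(\bP^1,\bQ)=\bQ(1)$, because the Hodge realization of $\bQ(1)$ is by construction the Tate Hodge structure $\bQ(1)$ and is compatible with the splitting coming from $\bP^1=\bA^1\cup\{\infty\}$.

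The main obstacle is the last step, which is the compatibility of normalizations between the motivic Chern class and the topological Chern class through the Hodge realization. Once one unpacks the construction of $R^{Hodge}_{\bQ, tr}$ on $\bQ(1)=\bQ_{tr}[\bG_m]/\bQ[-1]$ and compares it to the Hurewicz-type identification $H_1(\bC^*,\bQ)\simeq \bQ(1)$, this reduces to a direct check on the generator, and the rest of the argument is purely formal.
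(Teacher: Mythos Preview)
Your proposal is correct and follows essentially the same route as the paper: reduce via very ample line bundles and naturality to $X=\bP^N$, then to $X=\bP^1$, and verify there using the splitting $\bQ_{tr}[\bP^1]\simeq \bQ\oplus\bQ(1)[2]$. The paper's proof is terser (it simply says the case of $\bP^1$ is ``true by definition''), but your more explicit unpacking of the identifications and the final normalization check is entirely in the same spirit.
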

 \begin{proof} 
 As $X$ is projective, $Pic(X)$ is generated by very ample line bundles. This reduces the proof to the  case when $X= \bP^n_\bC$. Moreover, since $H^2(\bP^n_\bC, \bQ)\iso H^2(\bP^1_\bC, \bQ)$,  it suffices to prove the commutativity of the diagram for $X=\bP^1_\bC$ and $v\in  Hom(\mathbb{Q}_{tr}[\bP^1_\bC] , \bQ(1)[2])$ being the projection to the direct summand. In this case the lemma is true by definition.   
  \end{proof}

The main theorem is proven.

 \bigskip

{\bf Notation.} $DGcat$, $\Ho(\cC)$, $\Ho(\cF)$, $\cT(\cC_1, \cC_2)$, $\underrightarrow{\cC}$, $\cC^{perf}$,  $\cC^{pretr}$,  $\bD(\cC)$, $D_{dg}(\cE)$, $C_{dg}(\cE)$ \S \ref{dgcat};  $Sm_k$, $A_{tr}[Sm_k]$,  $PSh_{tr}$, $A_{tr}[X]$, $I^{\Delta}_{tr}$,  $I^{\et}_{tr}$, $I^{et, \Delta}_{tr}$, $D\cM^{\eff}_{\et}$,  $D\cM^{\eff}_{\gm, \et}$,  $D\cM^{\eff}$, 
$DM^{\eff}_{\et}$,  $DM^{\eff}_{\gm, \et}$,  $DM^{\eff}$, $Sh^{\et}_{  tr}$
 \S \ref{dg.mot};  $A[Sm_k]$, $I^{h}_{tr}$ \S \ref{h.t};  $A[\overline {Sm}]$ \S \ref{comp};  $ C^{sing}_{A}$, $R^{Betti}_A$   \S \ref{b.r.}; $MHS^A$, $MHS^A_{\eff}$, $R^{Hodge}_A$  \S \ref{h.r.};  $C^{\Delta}$, $D_{dg}(G,A)$, $\cS_A$, $D_{dg}(G, \bZ/l^{\cdot})$, $R^{\et}_{\bZ/l^{\cdot}}$ \S \ref{e.r.};
  $\cM_1(k)$,  $D^b_{dg}(\cM_1(k))$, $Tot_A$, $LAlb_\bQ$,  $T^{Hodge}_A$,  $T^{\et}_{\bZ/l^{\cdot}}$ \S \ref{1.m.};  $\overline{LAlb_\bQ}$, $\overline{Tot_A}$  \S \ref{lalb}.
\bigskip

\end{document}